\newcommand{\showcomments}{yes}
\newsavebox{\commentbox}
\theoremstyle{plain}
\newtheorem{thm}{Theorem}[section]
\newtheorem{cor}[thm]{Corollary}
\newtheorem{prop}[thm]{Proposition}
\newtheorem{lemma}[thm]{Lemma}
\newtheorem{ques}{Question}
\theoremstyle{definition}
\newtheorem{rem}{Remark}
\DeclareMathOperator{\rel}{\;rel\; }
\DeclareMathOperator{\rank}{rank}
\DeclareMathOperator{\Spec}{Spec}
\DeclareMathOperator{\Frac}{Frac}
\DeclareMathOperator{\Gal}{Gal}
\DeclareMathOperator{\trace}{trace}
\DeclareMathOperator{\red}{red}
\DeclareMathOperator{\SL}{SL} \DeclareMathOperator{\PSL}{PSL}
\DeclareMathOperator{\GL}{GL} \DeclareMathOperator{\PGL}{PGL}
\DeclareMathOperator{\SU}{SU}
\DeclareMathOperator{\rad}{rad}
\DeclareMathOperator{\Hom}{Hom}
\DeclareMathOperator{\Stab}{Stab}
\DeclareMathOperator{\rk}{rk}
\newcommand{\op}{\oplus}
\newcommand{\N}{\ensuremath{\mathbb{N}}}
\newcommand{\Q}{\ensuremath{\mathbb{Q}}}
\newcommand{\R}{\ensuremath{\mathbb{R}}}
\newcommand{\Z}{\ensuremath{\mathbb{Z}}}
\newcommand{\C}{\ensuremath{\mathbb{C}}}
\newcommand{\F}{\ensuremath{\mathbb{F}}}
\newcommand{\g}{\mathfrak g} 
\renewcommand{\sl}{\mathfrak sl} 
\newcommand{\m}{\mathfrak{m}}
\newcommand{\Lt}{\mathcal{L}}
\newcommand{\Lf}{\mathcal{B}}
\title{Linear groups with Borel's property}
\author{Khalid Bou-Rabee}
\email{khalid.math@gmail.com}
\address{School of Mathematics \\
University of Minnesota--Twin Cities \\
Minneapolis, MN 55455\\
U.S.A.}
\author{Michael Larsen}
\email{mjlarsen@indiana.edu}
\address{Department of Mathematics\\
    Indiana University \\
    Bloomington, IN 47405\\
    U.S.A.}
\thanks{Michael Larsen was partially supported by NSF grant DMS-1101424 and a Simons Fellowship.}
\begin{document}
\maketitle
\begin{abstract}When does Borel's theorem on free subgroups of semisimple groups generalize to other groups?
We initiate a systematic study of this question and find positive and negative answers for it. 
In particular, we fully classify fundamental groups of surfaces and von Dyck groups that satisfy Borel's theorem.
Further, as a byproduct of this theory, we make headway on a question of Breuillard, Green, Guralnick, and Tao concerning double word maps.
\end{abstract}

\tableofcontents

\section{Introduction}

Let $\Gamma$ be a group.
\emph{What group theoretic properties of $\Gamma$ can we infer from the flexibility of its representation theory?}
To systematically approach this basic question, we focus on the following property: 
$\Gamma$ has \emph{Borel's property} if for every connected semisimple group $G$, every proper subvariety $V$ of $G$, and every nontorsion $\gamma\neq 1$ in $\Gamma$, there exists a homomorphism $\phi\colon \Gamma\to G(\C)$ such that $\phi(\gamma)\not\in V(\C)$.
Loosely speaking, groups with Borel's property have so many representations that not only can every element be detected in any semisimple group, but any element can be made to miss every proper subvariety.

\begin{ques} \label{MainQuestion}
Which classes of groups have Borel's property?
\end{ques}

\noindent
In 1983, Armand Borel demonstrated the remarkable fact that free groups have Borel's property.
Question \ref{MainQuestion} also sheds light on double-word maps (see \S \ref{ApplicationsSection}) and, in fact, gives a partial answer to \cite[Problem 2]{BGGT10}. That is, we obtain

\begin{thm} \label{MainAnswerThm}
Let $w_1$ be a word of the form $[x_1, x_2] \cdots [x_{2k-1},x_{2k}]$ where $k \ge 2$.
Let $w_2$ be a word not in the normal closure of $w_1$.
Then the double-word map defined by $w_1, w_2$ is dominant.
\end{thm}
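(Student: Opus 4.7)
The plan is to reduce the dominance of the double-word map $\Phi := (w_1, w_2) \colon G^{2k} \to G \times G$ to Borel's property of the surface group $\Gamma := F_{2k}/\langle\langle w_1 \rangle\rangle$, followed by a dimension count.

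First, identify $\Gamma$ with the fundamental group $\pi_1(\Sigma_k)$ of a closed orientable surface of genus $k \geq 2$. This group is torsion-free, and by the classification of surface groups with Borel's property (established earlier in the paper), $\Gamma$ has Borel's property. The hypothesis $w_2 \notin \langle\langle w_1 \rangle\rangle$ says exactly that the image $\bar{w}_2$ of $w_2$ in $\Gamma$ is nontrivial, hence nontorsion. Applying Borel's property to $\bar{w}_2$: for every connected semisimple $G$ and every proper subvariety $V \subset G$, there is $\phi \colon \Gamma \to G(\C)$ with $\phi(\bar{w}_2) \notin V$. Identifying $\Hom(\Gamma, G)$ with $\{g \in G^{2k} : w_1(g) = e\}$, this says the restriction of $w_2$ to the fiber $w_1^{-1}(e)$ is a dominant morphism onto $G$.

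Second, upgrade this fiberwise dominance to dominance of $\Phi$ via upper semicontinuity of fiber dimension. Standard representation-variety theory due to Goldman gives $\dim \Hom(\pi_1(\Sigma_k), G) = (2k-1)\dim G$ for $k \geq 2$ and $G$ connected semisimple, expressing that $e$ is a regular value of the commutator-product map $w_1$ in this range. Combined with dominance of $w_2|_{\Hom(\Gamma,G)}$, the generic fiber of this restriction has dimension $(2k-2)\dim G$. Equivalently, for $y$ in a Zariski-dense open subset of $G$, the fiber $\Phi^{-1}((e,y))$ has dimension exactly $(2k-2)\dim G$.

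Third, let $W := \overline{\Phi(G^{2k})} \subseteq G \times G$; this is irreducible since $G^{2k}$ is. Applying upper semicontinuity of fiber dimension to the dominant morphism $\Phi \colon G^{2k} \to W$ at the point $(e,y)$ yields
\[
(2k-2)\dim G \;=\; \dim \Phi^{-1}((e,y)) \;\geq\; 2k\dim G - \dim W,
\]
so $\dim W \geq 2\dim G$. Being an irreducible closed subvariety of the irreducible variety $G \times G$ of maximal dimension, $W = G \times G$, which is dominance of $\Phi$.

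The main obstacle is the dimension estimate for $\Hom(\pi_1(\Sigma_k),G)$: one must verify that no irreducible component exceeds the expected dimension $(2k-1)\dim G$, since otherwise the generic fiber of $w_2|_{\Hom(\Gamma,G)}$ could exceed $(2k-2)\dim G$ and the semicontinuity bound would fail. The hypothesis $k \geq 2$ is crucial precisely here: for $k=1$, $e$ is not a regular value of a single commutator map, $\dim \Hom(\Z^2,G) > \dim G$ in general, and correspondingly $\Z^2$ does not have Borel's property, so the whole strategy collapses in that range.
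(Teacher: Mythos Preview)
Your argument is correct and rests on the same two inputs as the paper's proof: that the surface group $\Gamma=\pi_1(\Sigma_k)$ is $G$-free (so $e_{\bar w_2}$ is dominant on $\Hom(\Gamma,G)=w_1^{-1}(1)$), and that $\Hom(\Gamma,G)$ is irreducible of dimension $(2k-1)\dim G$. The route, however, is genuinely different. The paper works infinitesimally: it selects a regular point $\mathbf a\in w_1^{-1}(1)$ at which $e_{\bar w_2}$ induces a surjection on tangent spaces (available because $e_{\bar w_2}$ is dominant on an irreducible variety), invokes the Cohen--Macaulay criterion (Theorem~\ref{CM}) to get that $e_{w_1\,*}\colon T_{\mathbf a}G^{2k}\to T_1G$ is surjective as well, and then observes that $T_{\mathbf a}w_1^{-1}(1)\subseteq\ker e_{w_1\,*}$ already surjects onto the second factor, so $\phi_*$ is surjective at $\mathbf a$. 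You instead run a global dimension count: a single fiber $\Phi^{-1}((e,y))$ of dimension $(2k-2)\dim G$, together with the lower bound on fiber dimensions for $\Phi\colon G^{2k}\to\overline{\Phi(G^{2k})}$, forces $\dim\overline{\Phi(G^{2k})}\ge 2\dim G$.

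Two small remarks on your write-up. First, the inequality you invoke is not upper semicontinuity but the Chevalley-type lower bound (Theorem~\ref{dim-variation}(4) in the paper): every component of every fiber of a morphism between irreducible varieties has dimension at least $\dim X-\dim Y$. Second, it is worth retaining the paper's preliminary reduction to simply connected simple $G$, since the irreducibility and dimension statements for $\Hom(\pi_1(\Sigma_k),G)$ that you cite are stated and proved in that setting; your phrase ``$e$ is a regular value of $w_1$'' is stronger than what you actually use (you only need the dimension of $w_1^{-1}(e)$, not its smoothness).
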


\noindent We hope that a continued study of our new theory will help obtain a full answer to  \cite[Problem 2]{BGGT10}.

With this hope in mind, the remainder of our main results revolve around applications of tools we have developed for determining whether a group has Borel's property.
Let $\Lt$ be the class of groups that satisfies Borel's property.
Let $\Lf$ be the class of torsion-free groups in $\Lt$.
We start with a complete classification of fundamental groups of surfaces that are in $\Lt$, which  
indicates that our new line of study is not an empty theory.

\begin{thm} \label{SurfaceGroupTheorem}
Let $S$ be a compact surface without boundary.
Then $\pi_1(S, \cdot)$ is in $\Lt$ if and only if $S$ is not the Klein bottle.
In particular, $\pi_1(S, \cdot)$ is in $\Lf$ if and only if $S$ is neither the Klein bottle nor the real projective plane.
\end{thm}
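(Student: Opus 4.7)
The plan is to treat each case of the classification of closed surfaces separately: $S^2$, $T^2$, $\Sigma_g$ for $g\ge 2$, $\R P^2$, the Klein bottle $K$, and the non-orientable surfaces $N_k$ for $k\ge 3$. Since $\pi_1(S^2) = 1$ and $\pi_1(\R P^2) = \Z/2$ contain no nontorsion elements, Borel's property holds for them vacuously; of the two, only $S^2$ is torsion-free. For the torus, where $\pi_1(T^2) = \Z^2$, I would argue directly: given a proper subvariety $V$ of a connected semisimple $G$ and a nonzero $(m,n)\in\Z^2$, pick a regular semisimple $x\in G\setminus V$ (such elements are Zariski-dense in $G$), let $T\subset G$ be a maximal torus containing $x$, and use that $T\cong(\C^*)^{\rk G}$ is divisible to solve $s^m t^n = x$ for commuting $s,t\in T$; setting $\phi(1,0) = s$, $\phi(0,1) = t$ then yields $\phi\colon \Z^2 \to G(\C)$ with $\phi(m,n) = x \notin V$.

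For the Klein bottle I would show Borel's property \emph{fails}. Writing $K = \langle a, b : bab^{-1} = a^{-1}\rangle$, any representation $\phi\colon K \to G(\C)$ forces $\phi(a)$ to be conjugate in $G$ to $\phi(a)^{-1}$. Taking $G = \SL_3(\C)$ and $V = \{g \in \SL_3 : g \text{ is conjugate to } g^{-1}\}$, a short eigenvalue analysis shows $V$ is a proper subvariety: an element of $\SL_3$ with distinct eigenvalues $\alpha,\beta,\gamma$ (product $1$) lies in $V$ only when, up to permutation, one eigenvalue equals $1$ and the other two are reciprocal, a positive-codimension condition. Since $a\in K$ is nontorsion (it generates an infinite normal cyclic subgroup), this gives $K \notin \Lt$, hence also $K \notin \Lf$.

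For the remaining positive cases I would use Theorem~\ref{MainAnswerThm}. Writing $\Sigma_g = F_{2g}/\langle\!\langle w_1 \rangle\!\rangle$ with $w_1 = [x_1,x_2]\cdots[x_{2g-1},x_{2g}]$ and $g\ge 2$, any nontrivial $\gamma \in \Sigma_g$ lifts to $w_2 \in F_{2g}\setminus\langle\!\langle w_1\rangle\!\rangle$. Theorem~\ref{MainAnswerThm} makes the double-word map $(w_1,w_2)\colon G^{2g} \to G\times G$ dominant, from which I would deduce that $w_2$ restricted to the relator variety $w_1^{-1}(1)$ is dominant onto $G$; hence for any proper $V \subset G$ there is a tuple $(a_i,b_i) \in w_1^{-1}(1)$ with $w_2(a_i,b_i) \notin V$, defining $\phi\colon \Sigma_g \to G$ with $\phi(\gamma) \notin V$. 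The non-orientable surfaces $N_k$ for $k\ge 3$ would be handled analogously via the classical decompositions $N_{2m+1}\cong \Sigma_m \# \R P^2$ and $N_{2m}\cong \Sigma_{m-1}\# K$, which present $N_k$ by a relator combining commutators with one or two squares; an analogue of Theorem~\ref{MainAnswerThm} for such mixed relators, proved by the same techniques (squares, like commutators, are surjective and flexible on connected complex semisimple groups), supplies the required dominance. Since all surface groups of genus $\ge 2$ (orientable or not) are torsion-free, these cases also give membership in $\Lf$.

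The main obstacle, in both the orientable and non-orientable high-genus cases, will be the step from \emph{dominance of the double-word map $(w_1,w_2)\colon G^n \to G\times G$} to \emph{dominance of $w_2$ restricted to the specific fiber $w_1^{-1}(1)$}. This is not a formal consequence, since the fiber over the identity could in principle behave differently from a generic fiber. The argument will rely on $G$-equivariance of word maps under simultaneous conjugation on $G^n$, together with the fact that for commutator-and-square relators the fiber $w_1^{-1}(1)$ is nonempty (by Goto) and of the expected dimension. Establishing the non-orientable analogue of Theorem~\ref{MainAnswerThm} is the other main ingredient to be proved.
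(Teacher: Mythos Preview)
Your treatments of $S^2$, $\R P^2$, $T^2$, and the Klein bottle are fine. The Klein bottle argument is actually different from the paper's: the paper deduces it from the general fact (Theorem~\ref{VirtuallyAbelianNotSLnFree}) that a torsion-free virtually abelian nonabelian group is never $\SL_n$-free for some $n$, whereas you give a direct obstruction in $\SL_3$. Your variety $V$ should be taken to be the closed condition $\trace(g)=\trace(g^{-1})$ rather than literal conjugacy to the inverse (conjugacy classes need not be closed), but with that fix your argument is correct.

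The genuine gap is in the high-genus cases. Your plan to invoke Theorem~\ref{MainAnswerThm} is circular: if you look at the proof of that theorem in \S\ref{ApplicationsSection}, the key step---passing from dominance of the double-word map $(w_1,w_2)$ on $G^{2k}$ to dominance of $e_{\bar w_2}$ on the specific fiber $w_1^{-1}(1)$---is carried out precisely by appealing to the fact that the surface group $F_{2k}/\langle\!\langle w_1\rangle\!\rangle$ is already known to be $G$-free (``surface groups are residually free and therefore $G$-free, so $e_{\bar w_2}$ is dominant''). So Theorem~\ref{MainAnswerThm} is a \emph{consequence} of the surface-group case of Theorem~\ref{SurfaceGroupTheorem}, not an input to it. You have correctly identified the fiber-versus-generic-fiber issue as ``the main obstacle,'' but your proposed workaround via $G$-equivariance cannot succeed: simultaneous conjugation preserves the fiber $w_1^{-1}(1)$, so it does not let you transport information from a generic fiber to this one.

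The paper's actual route for $\Sigma_g$ ($g\ge 2$) and $N_k$ ($k\ge 4$) is much softer: these groups are residually free (they inject into direct products of free groups), so Lemma~\ref{DetectableLemma} and Borel's theorem for free groups give $G$-freeness immediately. The only hard case is $N_3$, which is \emph{not} residually free; your ``analogue of Theorem~\ref{MainAnswerThm} for mixed relators'' is exactly the missing content, and the paper devotes substantial machinery to it (Theorem~\ref{G-free} together with Propositions~\ref{Count}, \ref{ProjectivePlanesSatisfy1}, \ref{GetCondition2Prop}, \ref{Local}, \ref{Condition3}), verifying irreducibility of $\Hom(\pi_1(N_3),\SL_n)$ by point-counting over finite fields and checking the $\SL_2$ and $\SL_3$ base cases via a Fuchsian realization and a division-algebra argument. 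Your outline does not supply any of this.
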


The examples in Theorem~\ref{SurfaceGroupTheorem} are handled in different parts of the paper.
The Klein bottle group is handled in Corollary~\ref{KleinBottleGroupCorollary} in \S \ref{NotAlmostGFreeGroupsSection}.
In studying this case, we discovered a Tits alternative for $\Lf$:

\begin{thm} \label{ContainsFreeSubgroupTheorem}
Let $\Gamma$ be a finitely generated group that is in $\Lf$.
Then $\Gamma$ contains a nonabelian free group or is a free abelian group.
\end{thm}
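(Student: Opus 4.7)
I would argue by contradiction: suppose $\Gamma$ is finitely generated, torsion-free, in $\Lf$, non-abelian, and contains no nonabelian free subgroup. My goal is to exhibit an element $\gamma \in \Gamma$, a connected semisimple group $G$, and a proper subvariety $V \subset G$ such that $\phi(\gamma) \in V(\C)$ for every homomorphism $\phi \colon \Gamma \to G(\C)$, contradicting Borel's property. Once $\Gamma$ is shown to be abelian, torsion-freeness and finite generation give $\Gamma \cong \Z^n$.

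The first observation is that the ``no nonabelian free subgroup'' hypothesis is inherited by every linear image: if $\langle a, b \rangle \cong F_2$ sits inside $\phi(\Gamma)$, then lifts $\tilde a, \tilde b \in \Gamma$ produce a surjection $\langle \tilde a, \tilde b \rangle \twoheadrightarrow F_2$ which splits (since $F_2$ is free), yielding $F_2 \leq \Gamma$---a contradiction. By the Tits alternative for linear groups, every $\phi(\Gamma)$ is virtually solvable, so the Zariski closure $\overline{\phi(\Gamma)}$ is a virtually solvable algebraic subgroup of $G$, and its identity component lies in a Borel $B = T \ltimes U$.

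Since $\Gamma$ is non-abelian, pick $g, h \in \Gamma$ with $[g, h] \neq 1$. As $\Lf$ is closed under subgroups (one simply restricts homomorphisms), the $2$-generated subgroup $\langle g, h \rangle$ inherits all hypotheses, and it suffices to produce the obstruction inside such a subgroup. Two prototypical obstructions guide the analysis. A \emph{Klein-bottle-type rigidity relation} $s \gamma s^{-1} = \gamma^{-1}$ (or more generally $s \gamma s^{-1} = \gamma^k$ for some $k \neq 1$) for some $\gamma \neq 1$ forces $\phi(\gamma)$ into the proper subvariety of $\SL_3(\C)$ consisting of elements conjugate to their inverse (defined by a palindromic characteristic polynomial), reproducing the obstruction of Corollary~\ref{KleinBottleGroupCorollary}. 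A \emph{central commutator relation} $\gamma = [g, h] \in Z(\langle g, h \rangle)$, as in the Heisenberg group, forces $\phi(\gamma)$ to commute with $\phi(g)$ and $\phi(h)$; when the latter pair generates a non-abelian subgroup of $\SL_2(\C)$, this forces $\phi(\gamma) \in Z(\SL_2(\C)) = \{\pm I\}$, a proper $0$-dimensional subvariety.

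The main obstacle is showing that at least one such obstruction is \emph{always} available. My plan is to climb down the derived series of $\Gamma$: although $\Gamma$ itself need not be virtually solvable as an abstract group, each Zariski-closed image $\overline{\phi(\Gamma)}$ is, so one can inductively analyze $\Gamma$'s action by conjugation on each abelian subgroup appearing in its lower derived quotients. At each level, either a nontrivial inversion or power relation appears (giving a Klein-bottle-type obstruction via trace equations in $\SL_3(\C)$) or a central commutator configuration is present (giving a Heisenberg-type obstruction via centralizers in $\SL_2(\C)$). Torsion-freeness of $\Gamma$ is used to eliminate the degenerate finite-image representations that could otherwise circumvent the obstruction by making $\phi(\gamma)$ a nontrivial element of varying finite order. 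Together this produces the $\gamma, G, V$ required, forcing $\Gamma \notin \Lf$---the desired contradiction.
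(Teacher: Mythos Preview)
Your proposal identifies the right kinds of obstructions---the Klein-bottle (conjugate-to-inverse) relation handled in $\SL_3$ and the Heisenberg (central commutator) relation handled in $\SL_2$ are precisely the mechanisms the paper deploys in Theorem~\ref{VirtuallyAbelianNotSLnFree} and Lemma~\ref{InversesLemma}. The genuine gap is the reduction step. You write that ``$\Gamma$ itself need not be virtually solvable as an abstract group'' and propose to ``climb down the derived series of $\Gamma$,'' but if $\Gamma$ is not virtually solvable then its derived series need not terminate, and there is no inductive foothold. Knowing only that each linear \emph{image} is virtually solvable does not let you locate, inside $\Gamma$ itself, a single element $\gamma$ satisfying one of your two rigidity relations. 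Your dichotomy ``either a power relation appears or a central commutator is present'' is asserted but not argued; as stated it is exactly what needs proof.

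The paper closes this gap with Proposition~\ref{LinearEmbeddingLemma}: a finitely generated $G$-free group is \emph{itself} linear (via a Baire-category argument on the finitely many components of $\Hom(\Gamma,G)$). Tits' alternative then applies directly to $\Gamma$, so in the no-$F_2$ case $\Gamma$ is virtually solvable as an abstract group. A second lemma you are missing (Lemma~\ref{SolvableGroupLemma}) then upgrades ``virtually solvable and $G$-free'' to ``virtually abelian,'' using the observation that commutators in a Borel subgroup are unipotent, hence confined to a proper subvariety. Only after these two reductions does the paper invoke the Klein-bottle and Heisenberg obstructions (Theorem~\ref{VirtuallyAbelianNotSLnFree}) to force a torsion-free virtually abelian $\Gamma$ to be abelian. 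So your endgame is sound, but the route to ``$\Gamma$ is virtually abelian'' needs the linearity proposition; without it the argument is incomplete.
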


\noindent
See \S \ref{LinearSection} for a proof.
Since fundamental groups of oriented surfaces inject into a direct product of free groups  \cite{B62}, they are in $\Lf$ (see Lemma \ref{DirectProductLemma} in \S \ref{BasicTheorySection}), and the same can be said for connected sums of four or more real projective planes.
We are left with the fundamental group of the connected sum of three projective planes.
This group, which has presentation
$$
\pi_1 :=\left< a, b, c: a^2 b^2 c^2 = 1 \right>,
$$
\noindent does not inject into a direct product of free groups \cite{MR0162838} (for another example of a group in $\Lf$ that is not residually free, see Theorem~\ref{GFreeButNotRFTheorem} in \S \ref{DeterminingGFreeSection}).
Handling $\pi_1$ requires new machinery that we develop in \S \ref{DeterminingGFreeSection} (c.f. Proposition~\ref{ThreeProjectivePlanesProp}).
In total, our proof of Theorem \ref{SurfaceGroupTheorem}, and all the results of this paper, involve a fusion of ideas and methods from algebraic geometry, number theory, differential geometry, finite group theory, combinatorial group theory, and the theory of linear algebraic groups.

In \S \ref{BasicTheorySection}, we show that in the context of finitely generated groups
\begin{equation}
\label{chain}
\textrm{residually free} \subseteq \Lf \subseteq \Lt \cap \textrm{linear} \subseteq \Lt.
\end{equation}
The group $\pi_1$, discussed above, shows that the first containment is strict.
The last two containments are shown to be strict in \S \ref{BasicTheorySection}.
Significant examples of linear groups in $\Lt$ which are not in $\Lf$ are supplied by the next two theorems.

\begin{thm} \label{SevenSevenTheorem}
For $\ell\equiv 1$ (mod $3$) prime, the group $\Z/\ell * \Z/\ell$ is in $\Lt$.
\end{thm}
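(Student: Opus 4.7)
The plan is to show that for every connected semisimple algebraic group $G/\C$ and every nontorsion $\gamma\in\Gamma:=\Z/\ell * \Z/\ell$, the evaluation map $\epsilon_\gamma\colon\Hom(\Gamma,G)\to G$, $\phi\mapsto\phi(\gamma)$, has Zariski-dense image. Since a product of Zariski-dense subsets is Zariski-dense in the product, I may reduce to the case $G$ simple. The representation variety identifies with $V_\ell\times V_\ell$, where $V_\ell:=\{g\in G:g^\ell=1\}$, so the task becomes showing that the word map $w_\gamma\colon V_\ell\times V_\ell\to G$ is dominant on some irreducible component.

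My approach is to choose a generic pair of order-$\ell$ elements $g_0,h_0\in G$ that are regular semisimple, so each conjugacy class $\mathcal{O}_{g_0},\mathcal{O}_{h_0}$ has maximal dimension $\dim G-\rk G$. I then consider the family $\phi_{s,t}\colon a\mapsto sg_0s^{-1},\ b\mapsto th_0t^{-1}$, which gives an induced map $\Phi_\gamma\colon G\times G\to G$, $(s,t)\mapsto w_\gamma(sg_0s^{-1},th_0t^{-1})$; dominance of $\Phi_\gamma$ implies the desired conclusion. The hypothesis $\ell\equiv 1\pmod 3$ enters precisely in the construction of such $g_0,h_0$: a primitive cube root of unity in $(\Z/\ell)^*$ provides the flexibility needed to realize a regular torus element of order $\ell$ in each simple type, particularly in those with triality constraints on the root datum (types $D_4$, $E_6$, and adjoint forms thereof), where the cube-root structure is needed to pick a coweight-lattice vector modulo $\ell$ avoiding every root hyperplane.

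The dominance of $\Phi_\gamma$ is then established by a differential computation at a generic point, which reduces to the claim that a certain sum of twists of $(1-\Ad(g_0))\g$ and $(1-\Ad(h_0))\g$ fills $\g$; the regularity of $g_0,h_0$ makes each of these subspaces the complement of a Cartan subalgebra. The required input from free-group theory is Borel's original theorem applied to the commutator subgroup $[\Gamma,\Gamma]$: by Kurosh's subgroup theorem this subgroup is a free product of free and cyclic groups, and the torsion-freeness (torsion elements of $\Gamma$ project nontrivially to the abelianization $(\Z/\ell)^2$) forces it to be free; its rank is $(\ell-1)^2$ by the Euler characteristic $\chi(\Gamma)=2/\ell-1$. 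Since $\gamma$ is nontorsion, $\gamma^\ell\in[\Gamma,\Gamma]$ is nontrivial, and Borel's theorem gives dominance of its unrestricted word map, which I would transfer to $\Phi_\gamma$ via a local deformation argument. The main obstacle is the case when $G$ admits no regular semisimple element of order $\ell$ --- as happens in $E_8$ with $\ell=7$, since $7$ is not a regular number for $W(E_8)$ --- where $g_0,h_0$ must be chosen with larger but still proper centralizers and a more delicate dimension count is needed, the hypothesis $\ell\equiv 1\pmod 3$ again being used to guarantee that the resulting orbits are large enough.
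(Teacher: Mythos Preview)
Your proposal takes a genuinely different route from the paper, and it has real gaps.

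\textbf{Where the hypothesis actually enters.} Your explanation of how $\ell\equiv 1\pmod 3$ is used is not correct. The existence of a regular semisimple element of order $\ell$ in a simple group $G$ depends on $\ell$ relative to the root system (roughly, on whether $\ell$ is a regular number for the Weyl group), not on $\ell\equiv 1\pmod 3$. For instance, in $D_4$ the simply connected torus is $(\C^\times)^4$ with roots $\pm e_i\pm e_j$; for $\ell=7$ one checks that no $(x_1,x_2,x_3,x_4)\in(\Z/7)^4$ avoids all root hyperplanes, so there is no regular order-$7$ element --- and $7\equiv 1\pmod 3$ is exactly the first case the theorem must cover. You already note the same failure for $E_8$. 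So the ``choose regular $g_0,h_0$'' step breaks down precisely for the small $\ell$ that are most delicate, and your fallback (``larger but still proper centralizers and a more delicate dimension count'') is not an argument.

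\textbf{The transfer step.} Even granting regular $g_0,h_0$, the passage from Borel's theorem for $\gamma^\ell\in[\Gamma,\Gamma]$ to dominance of $\Phi_\gamma$ is not justified. Borel gives dominance of a word map on the full $G^{(\ell-1)^2}$, whereas you need dominance on the much smaller subvariety parametrized by a single pair of conjugacy classes. A ``local deformation argument'' does not bridge this: the tangent space to $\mathcal O_{g_0}\times\mathcal O_{h_0}$ inside $G\times G$ misses exactly the two Cartan directions, and for a general nontorsion word $\gamma$ there is no reason the differential of $w_\gamma$ restricted to that tangent space should still surject.

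\textbf{What the paper does instead.} The paper never works with a general simple $G$. It reduces everything to $\SL_n$ via Lemma~\ref{SL-reduction} and then applies an inductive machine (Theorem~\ref{GeneralToolTheorem}) in $n$: one specifies, for each $n$, a set $Y_n$ of allowed conjugacy-class tuples and verifies five conditions (irreducibility of each $V_n(\y)$, two block-decomposition compatibilities linking $Y_n$ to $Y_{n-1}$ and $Y_{n-2}\times Y_2$, an $\SL_2$ injection without unipotents, and an $\SL_3$ regular point landing in $\SL_1(D)$ for a degree-$3$ division algebra). The hypothesis $\ell\equiv 1\pmod 3$ enters twice, and for reasons unrelated to triality: it gives a $3$-element subgroup $\mu_3\subset\F_\ell^\times$, used to build the combinatorial sets $B_k$ governing eigenvalue multiplicities; and it guarantees a rational prime $p$ splitting in $\Q(\zeta_\ell)^{\mu_3}$ but not in $\Q(\zeta_\ell)$, so that $\zeta_\ell$ has degree $3$ over $\Q_p$ and embeds in a degree-$3$ division algebra over $\Q_p$, furnishing the $\SL_3$ point required by condition (5).
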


\begin{thm} \label{QuadTheorem}
Let $\ell\ge 19$ be a prime that is $\equiv 1$ (mod $3$). The group 
$$\left< x, y, z, t : x^\ell = y^\ell = z^\ell = t^\ell = xyzt \right>$$
is in $\Lt$.
\end{thm}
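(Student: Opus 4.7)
Write $G := G_\ell$ and $w := xyzt$. The plan proceeds in three stages: structural analysis, then separate treatments for elements whose image in the quotient by the center is, respectively, of infinite and of finite order.

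\emph{Structure.} The relation $x^\ell = w$ gives $xw = x^{\ell+1} = wx$, and the analogous identities for $y, z, t$ show that $w$ is central. The quotient $G/\langle w\rangle$ is the quadrangle Fuchsian group
\[
\Delta := \Delta(\ell,\ell,\ell,\ell) = \langle \bar x, \bar y, \bar z, \bar t \mid \bar x^\ell = \bar y^\ell = \bar z^\ell = \bar t^\ell = \bar x \bar y \bar z \bar t = 1 \rangle,
\]
which for $\ell \geq 3$ is cocompact and hyperbolic. Realizing $G$ as the fundamental group of a closed Seifert-fibered 3-manifold with hyperbolic base orbifold and nonzero Euler number, $G$ is torsion-free, $w$ has infinite order, and $1 \to \Z \to G \to \Delta \to 1$ is a central extension.

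\emph{Non-torsion image in $\Delta$.} Fix a nontrivial $\gamma \in G$ with $\bar\gamma \in \Delta$ of infinite order, a connected semisimple $H$, and a proper $V \subsetneq H$. It suffices to pull back detecting representations from $\Delta$. By Riemann--Hurwitz, a torsion-free finite-index subgroup $\Gamma \leq \Delta$ is the fundamental group of a closed orientable hyperbolic surface of genus $g \geq \ell-1 \geq 18$, and Theorem~\ref{SurfaceGroupTheorem} places $\Gamma \in \Lf$. Transfer techniques for finite-index inclusions (as developed in \S\ref{DeterminingGFreeSection}) then put $\Delta$ in $\Lt$. Composing a $V$-avoiding representation $\Delta \to H$ with the projection $G \to \Delta$ handles this case.

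\emph{Torsion or trivial image.} Write $\gamma = w^k g_0$ with $k \in \Z$ and $\bar g_0 \in \Delta$ of finite order. Representations factoring through $\Delta$ kill $w$, so we must instead build representations $\phi : G \to H$ where $\phi(w)$ is nontrivial and varies in a positive-dimensional subvariety of $H$. The natural target places $\phi(G)$ Zariski-densely inside a proper reductive subgroup $L \subset H$ with positive-dimensional center --- for instance, a Levi of the form $S(\GL_{n-1} \times \GL_1) \subset \SL_n$. As $\phi$ varies in a suitable irreducible component of the representation variety of $G$ into $L$, $\phi(w) \in Z(L)$ sweeps out a Zariski-dense subset of the central torus. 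Choosing $L$ with $Z(L) \not\subset V$, a generic $\phi$ in this family gives $\phi(\gamma) = \phi(w)^k \phi(g_0) \notin V$.

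\emph{Main obstacle.} Stage three is the crux. The central constraints $X^\ell = Y^\ell = Z^\ell = T^\ell = XYZT$ heavily restrict representations, and producing a family in which $\phi(w)$ varies nontrivially in a torus demands careful combinatorial control of eigenvalue data: this is where $\ell \equiv 1 \pmod{3}$ enters, supplying compatible $\ell$-th roots of unity for the central product of four order-$\ell$ elements. The lower bound $\ell \geq 19$ enters in the dimension count, guaranteeing that the resulting character variety of 4-tuples in $L$ has enough parameters for the evaluation-at-$\gamma$ map to dominate the relevant stratum of $H$ transversally to $V$.
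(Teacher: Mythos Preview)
Your structural analysis rests on a misreading of the presentation. In the notation $\langle x,y,z,t : x^\ell = y^\ell = z^\ell = t^\ell = xyzt\rangle$, the listed words are \emph{relators}, i.e., each is set equal to $1$. So $\Gamma$ \emph{is} the cocompact Fuchsian quadrangle group $\Delta(\ell,\ell,\ell,\ell)$, not a $\Z$-central extension of it; in particular $x,y,z,t$ have order $\ell$ (the paper explicitly enumerates the $4(\ell-1)$ torsion conjugacy classes as $x,x^2,\ldots,x^{\ell-1},y,\ldots,t^{\ell-1}$). Your Seifert-fibered picture and all of Stage~3 are therefore addressing a group that is not the one in the theorem.

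Even setting that aside, Stage~2 contains a genuine gap: there is no ``transfer technique for finite-index inclusions'' in \S\ref{DeterminingGFreeSection}, and the implication you need is simply false. Lemma~\ref{ContainmentLemma} goes only downward (subgroups inherit almost $G$-freeness), not upward. A finite-index surface subgroup does not force the overgroup into $\Lt$: the infinite dihedral group contains $\Z\in\Lf$ with index $2$ yet fails to be in $\Lt$ (Theorem~\ref{DihedralGroupTheorem}); likewise every von Dyck group contains a surface group of finite index but none lies in $\Lt$ (Theorem~\ref{TriangleGroupTheorem}). So the heart of the problem---detecting nontorsion $\gamma$ in $\Gamma$ itself, not just in a subgroup---is exactly what your argument skips.

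The paper's proof is entirely different in shape. It invokes Theorem~\ref{GeneralToolTheorem}, which requires specifying, for every $n\ge 2$, sets $Y_n$ of $k$-tuples of semisimple conjugacy classes and then verifying five conditions. Condition~(1), irreducibility of the varieties $V_n(\mathbf{y}) = \{(X,Y,Z,T)\in C_1\times C_2\times C_3\times C_4 : XYZT=1\}$, is established by counting $\F_p$-points via a Frobenius-type formula and controlling the character sum $\sum_\chi |\chi(x_1)|^4/\chi(1)^2$; this is where $\ell\ge 19$ enters, since it guarantees eigenvalue multiplicities at most $n/10$, feeding into the character bound of Appendix~\ref{CharacterBoundsAppendix}. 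The congruence $\ell\equiv 1\pmod 3$ enters through the combinatorics of the sets $B_k\subset\F_\ell$ (Conditions~(2) and~(3)) and the degree-$3$ division algebra needed for Condition~(5). None of this is reflected in your sketch.
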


\noindent
The proofs of Theorems  \ref{SevenSevenTheorem} and \ref{QuadTheorem} appear in \S \ref{weakGSection}.
The proof of Theorem \ref{SevenSevenTheorem} relies on a delicate strengthening of Borel's original proof.
The proof of the latter theorem follows a similar track while relying, in addition, on a method developed by Avraham Aizenbud and Nir Avni \cite{AA}
and a new character theory estimate established in Appendix \ref{CharacterBoundsAppendix}.
The three previous theorems might lead to some hope that all Fuchsian groups are in $\Lt$.  However, this is certainly not the case:

\begin{thm} \label{DihedralGroupTheorem}
The infinite dihedral group is not in $\Lt$, and therefore no group containing infinitely many elements of order $2$,
such as a non-oriented Fuchsian group or an oriented Fuchsian group with an elliptic point of order $2$, can be in $\Lt$.
\end{thm}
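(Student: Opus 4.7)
\medskip

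\noindent\textbf{Proof proposal.}
Write $D_\infty = \langle a, b \mid a^2 = b^2 = 1 \rangle$ and let $\gamma = ab$. Since $\gamma$ is non-torsion, to violate Borel's property it suffices to exhibit a single connected semisimple group $G$ together with a proper closed subvariety $V \subseteq G$ such that $\phi(\gamma) \in V(\C)$ for \emph{every} homomorphism $\phi \colon D_\infty \to G(\C)$. The key algebraic observation, valid in any target group, is
\[
\phi(a)\,\phi(\gamma)\,\phi(a)^{-1} \;=\; \phi(a \cdot ab \cdot a) \;=\; \phi(ba) \;=\; \phi(\gamma)^{-1},
\]
so $\phi(\gamma)$ is always conjugate to its own inverse, and in particular $\phi(\gamma)$ and $\phi(\gamma)^{-1}$ have equal characteristic polynomials in any rational representation of $G$.

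I would take $G = \SL_3$. Given $g \in \SL_3(\C)$ with eigenvalues $\lambda_1, \lambda_2, \lambda_3$ satisfying $\lambda_1 \lambda_2 \lambda_3 = 1$, the elementary symmetric identity $\sum_i \lambda_i^{-1} = \sum_{i<j} \lambda_i \lambda_j$ shows that the characteristic polynomial of $g$ equals $t^3 - \mathrm{tr}(g)\, t^2 + \mathrm{tr}(g^{-1})\, t - 1$, so $g$ and $g^{-1}$ share a characteristic polynomial precisely when $\mathrm{tr}(g) = \mathrm{tr}(g^{-1})$. Let
\[
V \;:=\; \{\, g \in \SL_3 \,:\, \mathrm{tr}(g) = \mathrm{tr}(g^{-1}) \,\}.
\]
Since $g^{-1} = \mathrm{adj}(g)$ for $g \in \SL_3$, both traces are polynomials in the entries of $g$, so $V$ is a closed subvariety cut out by a single equation, and it is \emph{proper} because, e.g., $g = \mathrm{diag}(2,\,3,\,1/6)$ violates it. By the preceding observation, $\phi(\gamma) \in V(\C)$ for every $\phi$, so $D_\infty \notin \Lt$.

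For the ``therefore'' clause I would reduce to the dihedral case via subgroups. If $\Gamma$ contains two involutions $s, t$ with $st$ of infinite order, then $\langle s, t \rangle \cong D_\infty \leq \Gamma$, the element $st$ is non-torsion in $\Gamma$, and restricting any homomorphism $\Gamma \to \SL_3(\C)$ to $\langle s, t \rangle$ forces $st$ into $V(\C)$ by the computation above; hence $\Gamma \notin \Lt$. The stated Fuchsian examples supply such a pair directly from hyperbolic geometry: two reflections of $\mathbb{H}^2$ along disjoint geodesics compose to a hyperbolic translation, and conjugating an order-$2$ elliptic element by a hyperbolic isometry whose axis avoids its fixed point produces a second involution whose product with the first is again hyperbolic, hence of infinite order.

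The main (and essentially only) obstacle is locating a suitable $G$. The conjugate-to-inverse strategy collapses in $\SL_2$ and $\PSL_2$, where every element is automatically conjugate to its inverse; one must climb to rank $\geq 2$ before this locus becomes a proper hypersurface, and once $\SL_3$ is fixed the verification is immediate.
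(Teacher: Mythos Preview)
Your argument is correct, but the paper takes a different and simpler route using $G=\SL_2$ rather than $\SL_3$. The point is that in $\SL_2(\C)$ the only elements squaring to the identity are $\pm I$, so any homomorphism $\phi\colon D_\infty\to\SL_2(\C)$ sends both $a$ and $b$ into the center $\{\pm I\}$, and hence $\phi(ab)\in\{\pm I\}$ as well---a zero-dimensional proper subvariety. Thus your closing remark that one ``must climb to rank $\ge 2$'' is slightly misleading: your conjugate-to-inverse mechanism does collapse in $\SL_2$, but a different (and shorter) mechanism succeeds there. Your $\SL_3$ hypersurface $\{\mathrm{tr}(g)=\mathrm{tr}(g^{-1})\}$ is a perfectly valid alternative and has the minor advantage of being codimension one rather than full codimension.

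For the ``therefore'' clause, the paper packages the general statement as Theorem~\ref{NotWeaklySL2Theorem}: a finitely generated \emph{linear} group with infinitely many involutions is not almost $\SL_2$-free. The proof uses Selberg's lemma to find a torsion-free finite-index subgroup $\Delta$; since $\langle X\rangle$ (the group generated by all involutions) maps into the center of $\SL_2(\C)$, either it contains a nontorsion element and we are done, or it is torsion and hence meets $\Delta$ trivially, forcing it to be finite---contradicting the infinitude of $X$. Your subgroup reduction via an explicit copy of $D_\infty$ is essentially equivalent: the same Selberg/pigeonhole argument shows that two distinct involutions in a common $\Delta$-coset have infinite-order product, producing the $D_\infty$ you need. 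Note that linearity (or something like it) is genuinely required here, as the paper observes: the lamplighter group $\Z/2\wr\Z$ has infinitely many involutions yet lies in $\Lt$.
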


\noindent
It turns out that even subgroups consisting of orientation-preserving isometries in triangle groups are not in $\Lt$.

\begin{thm} \label{TriangleGroupTheorem}
No von Dyck group is in $\Lt$.
\end{thm}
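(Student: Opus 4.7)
The plan is to show that Borel's property already fails at the simple group $G = \SL_2(\C)$, for every infinite von Dyck group $\Gamma = \langle x, y \mid x^p = y^q = (xy)^r = 1\rangle$ (the finite spherical von Dyck groups contain no nontorsion element and satisfy the property vacuously).

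The engine is the classical Fricke trace identity: for every word $W$ in two noncommuting letters, $\trace W(X, Y)$ equals a fixed polynomial with integer coefficients in the three quantities $\trace X$, $\trace Y$, $\trace XY$, valid uniformly for $X, Y \in \SL_2(\C)$. For any homomorphism $\phi\colon \Gamma \to \SL_2(\C)$, the images $\phi(x), \phi(y), \phi(xy)$ have finite orders dividing $p$, $q$, $r$ respectively, so each of $\trace \phi(x)$, $\trace \phi(y)$, $\trace \phi(xy)$ lies in the explicit finite set $\{\zeta + \zeta^{-1} : \zeta^m = 1,\ m \in \{p,q,r\}\}$. Combining these two facts, for any fixed $\gamma \in \Gamma$ expressed as a word $W(x, y)$, the trace $\trace \phi(\gamma)$ takes values only in a finite set $T_\gamma \subset \C$ depending on $\gamma$ and $(p,q,r)$ but not on $\phi$.

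Consequently $\phi(\gamma) \in V := \trace^{-1}(T_\gamma)$ for every $\phi$, and $V$ is a finite union of trace-level hypersurfaces, hence a proper Zariski closed subvariety of $\SL_2(\C)$. To obstruct Borel's property I need $\gamma$ to be nontorsion, and I would take $\gamma = [x, y]$: viewing $x$ and $y$ as rotations in the standard Fuchsian or crystallographic representation of $\Gamma$, the element $[x, y]$ is a translation in the Euclidean case and a hyperbolic isometry in the hyperbolic case, and so has infinite order in either regime.

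The entire argument takes place inside the $3$-dimensional group $\SL_2(\C)$, and the principal input is the Fricke identity. The only delicate point is the verification that $[x, y]$ is nontorsion in $\Gamma$, which follows from the standard description of torsion in a von Dyck group — every torsion element is a conjugate of a power of $x$, $y$, or $xy$ — so a generic commutator escapes. I expect this to be the only step requiring more than the identity itself.
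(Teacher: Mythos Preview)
Your argument is correct and genuinely different from the paper's. The paper proves Theorem~\ref{TriangleGroupTheorem} by invoking the general dimension criterion Theorem~\ref{no-vss}: it verifies that every virtually solvable subgroup of a (hyperbolic) von Dyck group is cyclic---using the structure theory of cocompact Fuchsian groups together with the earlier results on the infinite dihedral and Klein bottle groups---and then computes directly that $\dim\Hom(\Gamma,\SL_2)\le 3 < \dim\SL_2+\rk\SL_2$. Your route via the Fricke identity is far more elementary and self-contained: the observation that $\trace\phi(x)$, $\trace\phi(y)$, $\trace\phi(xy)$ are constrained to finite sets forces the entire character variety $\Hom(\Gamma,\SL_2)/\!\!/\SL_2$ to be finite, so every element of $\Gamma$, torsion or not, has image trapped in a proper trace-hypersurface of $\SL_2$. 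What the paper's approach buys is an illustration of Theorem~\ref{no-vss} in action (the paper itself remarks that the argument ``gives a non-trivial example in which the hypotheses of Theorem~\ref{no-vss} can be checked''); what your approach buys is a short, classical proof that avoids all of that machinery and even covers the Euclidean von Dyck groups uniformly.

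One small point: you do not actually need the specific element $[x,y]$; any nontorsion element suffices, and infinite von Dyck groups have one since they contain a surface group of finite index. Your claim that $[x,y]$ itself is nontorsion is true (for the hyperbolic case one can check via the commutator trace identity $\trace[X,Y]=(\trace X)^2+(\trace Y)^2+(\trace XY)^2-\trace X\trace Y\trace XY-2$ that $|\trace[x,y]|>2$ in the Fuchsian embedding), but the justification ``a generic commutator escapes'' is not quite a proof, so if you keep that choice you should supply the computation or cite it.
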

\noindent
The proofs of Theorems \ref{DihedralGroupTheorem} and \ref{TriangleGroupTheorem} appear in \S \ref{NotAlmostGFreeGroupsSection}.
It would be interesting to understand, in general, which Fuchsian groups are in $\Lt$.

This article is organized as follows. In \S \ref{BasicTheorySection} basic notions are defined, groups in $\Lf$ are shown to be linear, and a Tits alternative for $\Lf$ is established.
In \S \ref{DeterminingGFreeSection} and \S \ref{weakGSection} general methods for determining whether  a group is in $\Lf$ or $\Lt$, respectively, are described and applied to some examples. 
\S \ref{NotAlmostGFreeGroupsSection} discusses obstructions to $G$-freeness or almost freeness.
In \S \ref{ApplicationsSection} we give a partial  answer to a question of Emmanuel Breuillard, Ben Green, Robert Guralnick, and Terence Tao concerning double word maps.
For the convenience of the reader, Appendix \ref{AlgebraicGeometryBackgroundAppendix} collects some basic definitions and facts from algebraic geometry which are used in the
paper.
Appendix \ref{CharacterBoundsAppendix} gives some bounds on irreducible characters of certain finite groups that are used in \S \ref{weakGSection}.

\subsection*{Acknowledgements}

The first-named author gratefully acknowledges the hospitality and support given to him from the Ventotene 2013 conference for a week while he worked on some of the material in this paper.  
The first-named author is also grateful to Albert Marden for pointing him towards the work of Sepp\"al\"a and Sorvali.
The second-named author would like to acknowledge useful conversations with Nir Avni, Aner Shalev, and Pham Tiep.
Both authors are grateful to Benson Farb for giving helpful comments on an earlier draft.

\section{General theory} \label{BasicTheorySection}

In this section, we present some basic results and examples that we hope will cast light on Question \ref{MainQuestion}.
Before we begin, we need some notation that allows us to succinctly describe groups that satisfy Borel's property to varying degrees for a given linear algebraic group $G$.

\subsection{Notation and Terminology.}
For notation and terminology regarding algebraic geometry, see Appendix \ref{AlgebraicGeometryBackgroundAppendix}.  Here we note only that we do not assume that
our varieties are either irreducible or reduced, but for our purposes infinitesimal structure will never matter.
Algebraic groups will be assumed to be taken over $\C$ unless some other field is specified explicitly.

Let $\Gamma$ be a finitely generated group. 
Denote by $\Gamma^\bullet$ the set $\Gamma \setminus \{1 \}$.
Let $\mathcal{P}$ be a class of groups.
We say that $S \subseteq \Gamma$ is \emph{detected by $\mathcal{P}$} if there exists a homomorphism $\phi: \Gamma \to P$, $P \in \mathcal{P}$ such that $\phi(S) \cap \{ 1 \} = \emptyset$.
We say that $S \subseteq \Gamma$ is \emph{almost detected by $\mathcal{P}$} if there exists a homomorphism $\phi: \Gamma \to P$, $P \in \mathcal{P}$ such that $\phi(s)$ is nontorsion for every nontorsion $s \in S$.
In the case when $S = \{ \gamma \}$, we can sometimes say that \emph{$\gamma$ is (almost) detected by $\mathcal{P}$} instead of $\{ \gamma \}$ is (almost) detected by $\mathcal{P}$.
If every element in $\Gamma^\bullet$ is (almost) detected by $\mathcal{P}$ we say that $\Gamma$ is \emph{(almost) detectable by $\mathcal{P}$} or is \emph{(almost) residually $\mathcal{P}$}.

Let $G$ be a linear algebraic group defined over $\C$.
Let $V$ be a subvariety of $G$.
We say $S \subseteq \Gamma$ is \emph{detected by $G\rel V$} if there exists a homomorphism $\phi : \Gamma \to G(\C)$ such that $\phi(S) \cap V = \emptyset$.
If $S = \{ \gamma \}$ we sometimes say \emph{$\gamma$ is detected by $G\rel V$} instead of $S$ is detected by $G\rel V$.
If every element in $\Gamma^\bullet$ is detected by $G\rel V$ for every subvariety $V$ of $G$, then we say that $\Gamma$ is \emph{$G$-free}.
If every nontorsion element in $\Gamma$ is detected by $G\rel V$ for any subvariety $V$ of $G$, then we say that $\Gamma$ is \emph{almost $G$-free}.
Equivalently, $\Gamma$ is $G$-free (resp. almost $G$-free) if and only if the \emph{evaluation map} 
$$e_{G,\gamma}\colon \Hom(\Gamma,G(\C))\to G(\C)$$ 
has dense image for all $\gamma\neq 1$ (resp. all nontorsion $\gamma$).
In the definition of (almost) $G$-free, if the representations can be taken to be faithful, we say that the group is  \emph{(almost) $G$-faithful}.

\subsection{Some basic results}\label{BasicTheorySubSection}

We start by showing that groups in $\Lf$ are precisely those in $\Lt$ that are $G$-free for some connected semisimple $G$.

\begin{lemma} \label{TorsionFreeLemma}
Let $G$ be a connected semisimple algebraic group. Then any $G$-free group must be torsion-free. 
\end{lemma}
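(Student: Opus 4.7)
The plan is a direct contradiction argument. Suppose $\Gamma$ is $G$-free but contains a nontrivial torsion element $\gamma$, say of finite order $n \geq 2$. I will produce a proper subvariety $V \subset G$ such that $\gamma$ cannot be detected by $G \rel V$, contradicting $G$-freeness.

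The natural choice is the $n$-torsion subscheme
\[
V := \{g \in G : g^n = 1\},
\]
i.e., the fiber over the identity of the morphism $G \to G$ sending $g \mapsto g^n$. This is Zariski closed in $G$, and for any homomorphism $\phi\colon \Gamma \to G(\C)$ one has $\phi(\gamma)^n = \phi(\gamma^n) = 1$, so automatically $\phi(\gamma) \in V(\C)$. Hence no such $\phi$ can satisfy $\phi(\gamma) \notin V$. Once we verify $V$ is a \emph{proper} subvariety, this contradicts the definition of $G$-freeness applied to $\gamma \in \Gamma^\bullet$.

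The one nontrivial step is showing $V \neq G$, which is where connectedness and semisimplicity enter. Since $G$ is connected semisimple, $G$ has positive dimension and contains a maximal torus $T$ of positive dimension $r \geq 1$; identifying $T \cong (\C^\times)^r$, its $n$-torsion subgroup is $\mu_n^r$, a finite set. Thus $V(\C) \cap T(\C)$ is finite while $T$ is positive-dimensional, so $V$ cannot contain $T$, and in particular $V \neq G$. (Equivalently, since $T$ is an irreducible positive-dimensional subvariety of $G$ and the $n$-th power map restricts nontrivially to $T$, the fiber over $1$ is proper in $T$, hence proper in $G$.)

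I expect no real obstacles: the only subtlety is ensuring $V$ is genuinely a proper subvariety, which forces one to invoke positive-dimensionality, provided by the existence of a nontrivial maximal torus in any connected semisimple $G$. Putting these together yields the contradiction and completes the proof.
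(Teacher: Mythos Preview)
Your proof is correct and follows essentially the same approach as the paper: define $V$ as the $n$-torsion subscheme (the fiber of the $n$th power map over the identity), observe that any homomorphism must send $\gamma$ into $V$, and conclude. You in fact supply more detail than the paper does, since you justify via the maximal torus that $V$ is a proper subvariety, whereas the paper simply asserts this.
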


\begin{proof}
Let $\Gamma$ be a group with an element $\gamma$ of order $k$, where $k \in \Z^{>0}$.
For any map $\phi : \Gamma \to G(\C)$, we have $\phi (\gamma) \in  \{ A \in G : A^k = 1 \}$.
If $e_{G,x^k}$ denotes the $k$th power map on $G$, then $e_{G,x^k}^{-1}(G)$ is a proper closed subvariety of $G$,
and $\gamma$ cannot be detected by $G$ rel $e_{G,x^k}^{-1}(G)$.
\end{proof}

The following lemma is a slight strengthening of the fact that free groups are in $\Lf$.
While the result is known, we include a proof  for completeness.

\begin{lemma} \label{FreeLemma}
Finitely generated free groups are $G$-faithful for all connected semisimple linear algebraic $G$.
\end{lemma}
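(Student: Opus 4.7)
The plan is to upgrade Borel's theorem (which gives $G$-freeness of finitely generated free groups) to a statement about faithful representations by a standard Baire category argument on the representation variety.

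Fix a free basis $x_1,\dots,x_n$ of $F_n$, and identify $\Hom(F_n,G(\C))$ with $G(\C)^n$ via $\phi\mapsto(\phi(x_1),\dots,\phi(x_n))$. Because $G$ is connected, $G$ is an irreducible complex algebraic variety, hence so is $G(\C)^n$. For each $w\in F_n$, the word map $e_{G,w}\colon G(\C)^n\to G(\C)$, $(g_1,\dots,g_n)\mapsto w(g_1,\dots,g_n)$, is a morphism of varieties. Borel's theorem says precisely that $e_{G,w}$ has dense image for every $w\ne 1$; equivalently, for every $w\ne 1$ the set
\[
Z_w\;:=\;e_{G,w}^{-1}(\{1\})
\]
is a proper Zariski-closed subset of $G(\C)^n$. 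Similarly, given the nontrivial element $\gamma\in F_n$ and the proper subvariety $V\subsetneq G$ singled out in the definition of $G$-freeness, the set $Z_{\gamma,V}:=e_{G,\gamma}^{-1}(V)$ is a proper closed subvariety of $G(\C)^n$.

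Next I would invoke a standard fact: an irreducible complex algebraic variety of positive dimension is \emph{not} a countable union of proper Zariski-closed subvarieties. One way to see this is to pick a smooth point and a generic affine line through it and reduce to the statement that $\C$ is not a countable union of finite sets, which holds since $\C$ is uncountable. (If $G$ is trivial, the lemma is vacuous.) Since $F_n$ is countable, the collection
\[
\{Z_w : w\in F_n\setminus\{1\}\}\cup\{Z_{\gamma,V}\}
\]
is countable, so there exists $(g_1,\dots,g_n)\in G(\C)^n$ lying outside every set in this collection. The homomorphism $\phi\colon F_n\to G(\C)$ defined by $\phi(x_i)=g_i$ then satisfies $\phi(w)\ne 1$ for every $w\ne 1$ (hence is faithful) and satisfies $\phi(\gamma)\notin V$, proving the lemma.

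The only genuine content beyond Borel's theorem is the Baire-type step, and the only mild obstacle is packaging that step correctly: one needs both the irreducibility of $G(\C)^n$ (which is why the hypothesis that $G$ is connected is used) and the uncountability of $\C$. Everything else is formal manipulation of the representation variety.
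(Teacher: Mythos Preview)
Your proof is correct and follows essentially the same approach as the paper: both use Borel's theorem to see that each ``bad'' set is a proper closed subvariety of $\Hom(F_n,G)\cong G^n$, and then invoke a Baire-category/uncountability argument to find a point avoiding the countable union. The only cosmetic difference is that the paper packages things so that a single faithful $\phi$ has $\phi(\gamma)\notin V$ for \emph{all} $\gamma\neq 1$ simultaneously (by taking $L_\gamma=\{\phi:\phi(\gamma)\notin V\cup\{1\}\}$ and intersecting over all $\gamma$), whereas you fix $\gamma$ and separately impose faithfulness via the sets $Z_w$; both establish $G$-faithfulness as defined.
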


\begin{proof}
Let $F_d$ be a finitely generated free group of rank $d$.
Let $V$ be a subvariety of $G$, where $G$ is a connected semisimple linear algebraic $G$.
By Borel's Theorem \cite{B83}, we have that 
$$
L_\gamma := \{ \phi \in \Hom(F_d, G(\C)) : \phi( \gamma ) \notin V\cup\{1\} \}
$$
is nonempty for every $\gamma\in F_d^\bullet$.
Each $L_\gamma$ is consists of the set of closed points of a proper closed subvariety of $\Hom(F_d,G) \cong G^d$
and is therefore a closed subset of $G(\C)^d$ without interior points.
As $F_d$ is countable, 
$$\bigcap_{\gamma \in F_d^\bullet} L_\gamma$$
is nonempty by the Baire category theorem (Theorem~\ref{Baire}).
Any representation lying in this intersection is faithful and satisfies $\phi(F_d^\bullet) \cap V = \emptyset$, so $F$ is $G$-faithful for any semisimple $G$.
\end{proof}

The following lemmas are useful tools for constructing elements in $\Lt$ or $\Lf$.

\begin{lemma} \label{DetectableLemma}
Let $\mathcal{P}$ be a class of  (almost) $G$-free groups.
If $\Gamma$ is (almost) detectable by $\mathcal{P}$, then $\Gamma$ is (almost) $G$-free.
\end{lemma}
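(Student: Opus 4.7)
The plan is a direct composition argument. Fix $\gamma \in \Gamma^\bullet$ (respectively, a nontorsion $\gamma \in \Gamma$) and an arbitrary proper subvariety $V$ of $G$; the goal is to produce a homomorphism $\Gamma \to G(\C)$ sending $\gamma$ outside $V$.

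First I would invoke the hypothesis that $\Gamma$ is (almost) detectable by $\mathcal{P}$ applied to the singleton $\{\gamma\}$. This yields some $P \in \mathcal{P}$ and a homomorphism $\phi \colon \Gamma \to P$ with $\phi(\gamma) \neq 1$ in the $G$-free case, or with $\phi(\gamma)$ nontorsion in the almost $G$-free case. Second, I would use the assumption that every member of $\mathcal{P}$ is (almost) $G$-free: applied to $\phi(\gamma) \in P^\bullet$ (respectively, to the nontorsion element $\phi(\gamma) \in P$) and the subvariety $V$, this produces $\psi \colon P \to G(\C)$ with $\psi(\phi(\gamma)) \notin V$. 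The composition $\psi \circ \phi \colon \Gamma \to G(\C)$ is then a homomorphism detecting $\gamma$ by $G \rel V$. Since $\gamma$ and $V$ were arbitrary, $\Gamma$ is (almost) $G$-free.

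There is no genuine obstacle here; the lemma is essentially the transitivity of (almost) detection through the class of (almost) $G$-free targets. The only point requiring minimal care is that the almost case goes through cleanly because almost detection is defined precisely to preserve the property of being nontorsion, which is exactly what is needed to feed $\phi(\gamma)$ into the almost $G$-freeness of $P$.
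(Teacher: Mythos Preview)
Your proof is correct and follows essentially the same approach as the paper: fix $\gamma$ and $V$, use detectability to get $\phi\colon \Gamma\to P$ with $\phi(\gamma)$ nontrivial (respectively nontorsion), then use (almost) $G$-freeness of $P$ to get $\psi\colon P\to G(\C)$ with $\psi(\phi(\gamma))\notin V$, and compose. The paper's proof is line-for-line the same argument.
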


\begin{proof}
Let $\gamma \in \Gamma^\bullet$ be a given (torsion-free) element.
Let $V$ be an arbitrary subvariety of $G$.
Since $\Gamma$ is (almost) detectable by $\mathcal{P}$, there exists a homomorphism $\phi: \Gamma \to P \in \mathcal{P}$ with $\phi(\gamma) \neq 1$ ($\phi(\gamma)$ torsion-free).
Since $P$ is (almost) $G$-free, there exists a homomorphism $\psi : P \to G$ with $\psi(\phi(\gamma)) \notin V$.
Thus, the map $\psi\circ \phi : \Gamma \to G$ has $\psi \circ \phi (\gamma) \notin V$, as desired.
\end{proof}

The next lemmas demonstrate that $\Lt$, like the class of residually free groups, is closed under direct products and passage to subgroups.

\begin{lemma} \label{DirectProductLemma}
(Finite) direct products of (almost) $G$-free groups are (almost) $G$-free.
\end{lemma}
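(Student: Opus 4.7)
The plan is to prove both statements by the trivial projection trick. By induction, it suffices to handle the case of two factors $\Gamma = \Gamma_1 \times \Gamma_2$, with projections $\pi_i \colon \Gamma \to \Gamma_i$.

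For the $G$-free assertion, I would let $\gamma = (\gamma_1, \gamma_2) \in \Gamma^\bullet$ be given, choose an index $i$ with $\gamma_i \neq 1$, and apply $G$-freeness of $\Gamma_i$ to produce, for any proper subvariety $V \subset G$, a homomorphism $\phi_i \colon \Gamma_i \to G(\C)$ with $\phi_i(\gamma_i) \notin V$. Then $\phi_i \circ \pi_i \colon \Gamma \to G(\C)$ sends $\gamma$ to $\phi_i(\gamma_i) \notin V$, which is exactly what is needed.

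For the almost $G$-free assertion, I would use the same construction, but first verify the elementary fact that if $\gamma = (\gamma_1,\gamma_2)$ is nontorsion in $\Gamma_1 \times \Gamma_2$, then at least one $\gamma_i$ is itself nontorsion. Indeed, if $\gamma_1^{k_1} = 1$ and $\gamma_2^{k_2} = 1$, then $\gamma^{\lcm(k_1,k_2)} = 1$, contradicting the hypothesis. Picking such an $i$, the almost $G$-freeness of $\Gamma_i$ yields a homomorphism $\phi_i \colon \Gamma_i \to G(\C)$ with $\phi_i(\gamma_i) \notin V$ for any chosen proper subvariety $V$, and again composing with $\pi_i$ finishes the argument.

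The only substantive point is the nontorsion lemma for coordinates, which is a one-line calculation, so there is no real obstacle; the result is essentially a formal consequence of the existence of coordinate projections. The induction from two factors to finitely many is immediate since the class of (almost) $G$-free groups is now closed under binary products.
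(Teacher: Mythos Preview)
Your proof is correct and is essentially the same argument as the paper's: both use the coordinate projections $\pi_i\colon \Gamma\to\Gamma_i$ to reduce to the (almost) $G$-freeness of the factors, with the key observation in the ``almost'' case being that a nontorsion element must have a nontorsion coordinate. The only cosmetic difference is that the paper packages the composition step through the notion of being ``(almost) detectable by $\mathcal{P}=\{\Gamma_i\}$'' and then invokes Lemma~\ref{DetectableLemma}, whereas you unwind that lemma directly and proceed by induction on the number of factors.
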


\begin{proof}
Let $G$ be a linear algebraic group and $V$ an arbitrary subvariety of $G$.
Let $\Gamma = \prod_{i =1}^N \Gamma_i$ be a direct product of (almost) $G$-free groups.
Set $\mathcal{P} = \{ \Gamma_i \}_{i=1}^N$.
Using the natural projections onto $\Gamma_i$, we see that $\Gamma$ is (almost) detectable by $\mathcal{P}$.
Lemma \ref{DetectableLemma} then implies that $\Gamma$ is (almost) $G$-free.
\end{proof}

\begin{lemma} \label{ContainmentLemma}
Subgroups of (almost) $G$-free groups are (almost) $G$-free.
\end{lemma}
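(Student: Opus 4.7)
The plan is that this is essentially immediate from the definitions: one simply restricts detecting homomorphisms from the ambient group to the subgroup. More precisely, let $H \le \Gamma$ where $\Gamma$ is (almost) $G$-free, let $V$ be an arbitrary subvariety of $G$, and pick $h \in H^\bullet$ (respectively, nontorsion $h \in H$). The key observation is that $h$ is nontrivial (resp.\ nontorsion) as an element of $\Gamma$, so that $h \in \Gamma^\bullet$ (resp.\ $h$ is a nontorsion element of $\Gamma$).

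Since $\Gamma$ is (almost) $G$-free, there exists a homomorphism $\phi\colon \Gamma\to G(\C)$ with $\phi(h)\notin V$. The restriction $\phi|_H\colon H\to G(\C)$ is a homomorphism with $\phi|_H(h)\notin V$. As $h$ and $V$ were arbitrary, $H$ is (almost) $G$-free.

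The only conceivable obstacle would be definitional: the notation section assumes the groups under consideration are finitely generated, so one might worry about subgroups that are not finitely generated. However, the $G$-free condition is defined pointwise on elements and so extends verbatim to arbitrary groups; the verification above goes through regardless of whether $H$ is finitely generated. No further ideas are needed.
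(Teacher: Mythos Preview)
Your proof is correct and is essentially the same as the paper's: the paper phrases the argument by observing that the inclusion $\Delta\hookrightarrow\Gamma$ makes $\Delta$ (almost) residually $\{\Gamma\}$ and then invokes Lemma~\ref{DetectableLemma}, but unwinding that lemma gives exactly your restriction argument.
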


\begin{proof}
Let $\Gamma$ be a (almost) $G$-free group and $\Delta \leq \Gamma$.
By using the injection map $\phi : \Delta \to \Gamma$ induced by $\Delta \leq \Gamma$, we see that $\Delta$ is (almost) residually $\{ \Gamma \}$.
As $\Gamma$ is (almost) $G$-free, we have that $\Delta$ is (almost) $G$-free by Lemma \ref{DetectableLemma}.
\end{proof}

The next lemma will be used in \S\ref{Whether}.

\begin{lemma} \label{ProductLemma}
Let $\Gamma$ be a $G_i$-free group for each $i=1,\ldots, k$.
Then $\Gamma$ is $\prod_{i=1}^k G_i$-free.
\end{lemma}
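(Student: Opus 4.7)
The plan is to leverage the universal property of the direct product: a homomorphism $\phi \colon \Gamma \to \prod_{i=1}^k G_i(\C)$ is the same data as a $k$-tuple of homomorphisms $\phi_i \colon \Gamma \to G_i(\C)$. This identifies $\Hom(\Gamma, G(\C))$ with $\prod_{i=1}^k \Hom(\Gamma, G_i(\C))$ (where $G := \prod_i G_i$), and under this identification the evaluation map $e_{G,\gamma}$ becomes the product map $\prod_i e_{G_i, \gamma}$. In particular, for any $\gamma \in \Gamma^\bullet$,
\[
\mathrm{Image}(e_{G,\gamma}) \;=\; \prod_{i=1}^k \mathrm{Image}(e_{G_i,\gamma}) \;\subseteq\; \prod_{i=1}^k G_i(\C).
\]

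By the hypothesis that $\Gamma$ is $G_i$-free, each factor $\mathrm{Image}(e_{G_i,\gamma})$ is Zariski dense in $G_i$. Since $G$ is $G$-free is equivalent to $e_{G,\gamma}$ having Zariski-dense image (as noted in the paper), I would reduce to the following purely algebro-geometric fact: if $A \subseteq X(\C)$ and $B \subseteq Y(\C)$ are Zariski dense in $X$ and $Y$, then $A \times B$ is Zariski dense in $X \times Y$. This is proved by a slice-by-slice argument: if a regular function $f$ on $X \times Y$ vanishes on $A \times B$, then for each fixed $b \in B$ the restriction $f(\,\cdot\,,b)$ is regular on $X$ and vanishes on $A$, hence vanishes on all of $X$ by density of $A$; then for each $x \in X(\C)$ the restriction $f(x,\,\cdot\,)$ vanishes on all of $Y$ by density of $B$, so $f \equiv 0$. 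Induction on $k$ then gives the general product statement.

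Combining these steps, $\mathrm{Image}(e_{G,\gamma})$ is Zariski dense in $G$ for every $\gamma \in \Gamma^\bullet$, which is exactly the condition that $\Gamma$ is $G$-free. There is no real obstacle; the only thing worth being careful about is that the two notions of density — Zariski density of a subset of $G(\C)$ and density of the image of an evaluation morphism of varieties — agree, but this is precisely the reformulation recorded in the ``Notation and Terminology'' subsection. Note also that the argument does not use any special hypothesis on the $G_i$ (connectedness, semisimplicity, irreducibility, or reducedness), so the same proof applies in slightly greater generality than stated.
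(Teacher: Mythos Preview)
Your proof is correct and follows essentially the same approach as the paper: identify $\Hom(\Gamma,G)$ with $\prod_i \Hom(\Gamma,G_i)$, observe that $e_{G,\gamma}$ factors as the product of the $e_{G_i,\gamma}$, and conclude that the image is a product of dense sets, hence dense. The one place your argument is arguably more careful is the density step: the paper simply invokes ``the definition of product topology,'' but the Zariski topology on $\prod_i G_i$ is strictly finer than the product of the Zariski topologies, so density in the product topology does not immediately give Zariski density; your slice-by-slice regular-function argument is the right way to close that gap.
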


\begin{proof}
Let $G = \prod_i G_i$.  Then
$$\Hom(\Gamma,G) = \prod_i \Hom(\Gamma,G_i),$$
and
$$e_{G,\gamma}(\Hom(\Gamma,G)) = \prod_i e_{G_i,\gamma} (\Hom(\Gamma,G_i)).$$
Given a finite collection of topological spaces $X_i$ and dense subsets $D_i\subset X_i$, by definition of product topology, $\prod_i D_i$ is dense in $\prod_i X_i$.  Applying this to $D_i := e_{G_i,\gamma}$ and $X_i := G_i$, we obtain the lemma.
\end{proof}

\subsection{Connections with linearity and a Tits alternative}
\label{LinearSection}

The next proposition, coupled with the fact that finite groups are never $G$-free (Lemma \ref{TorsionFreeLemma}), shows that 
for finitely generated groups, being $G$-free is a stronger condition than being linear.

\begin{rem} \label{GrigorchukGroupRemark}
It is not true that an almost $G$-free group is necessarily linear.
The first Grigorchuk group is a finitely generated \cite[Corollary VIII.15]{MR1786869}, residually finite \cite[Proposition VIII.6]{MR1786869}, nonlinear \cite[Corollary VIII.19]{MR1786869} group consisting only of torsion elements \cite[Theorem VIII.17]{MR1786869}.
Hence, the first Grigorchuk group is almost $G$-free for any semisimple $G$, but is not linear.  It now follows that the last arrow in (\ref{chain}) cannot be reversed.
\end{rem}

\begin{prop} \label{LinearEmbeddingLemma}
If $\Gamma$ is a finitely generated group that is $G$-free, then the group $\Gamma$ is linear.
If $\,\Gamma$ is a finitely generated group that is almost $G$-free, then $\Gamma$ is an extension of a linear group by a torsion group.
\end{prop}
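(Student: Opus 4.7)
The plan is to mimic the Baire-category argument of Lemma~\ref{FreeLemma}, applied one irreducible component at a time, and then assemble the resulting representations into a product. Fix generators $\gamma_1, \ldots, \gamma_d$ of $\Gamma$, so that $X := \Hom(\Gamma, G(\C))$ sits inside $G(\C)^d$ as the $\C$-points of a Zariski-closed subvariety of $G^d$ (cut out by the relations of $\Gamma$; Noetherianity of $G^d$ reduces us to finitely many of them). Let $X_1, \ldots, X_m$ be the irreducible components of the associated reduced scheme, and for each $i$ introduce the \emph{generic kernel}
$$\Gamma_i \;:=\; \{\gamma \in \Gamma : \phi(\gamma) = 1 \text{ for every } \phi \in X_i(\C)\},$$
which is a normal subgroup of $\Gamma$.

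Next, I would run a Baire-category argument on each $X_i$ separately. For $\gamma \notin \Gamma_i$ the set $N_{i,\gamma} := \{\phi \in X_i(\C) : \phi(\gamma) = 1\}$ is, by definition of $\Gamma_i$, a proper Zariski-closed subset of the irreducible variety $X_i$, hence of strictly smaller dimension and therefore nowhere dense in $X_i(\C)$ in the analytic topology. Since $X_i(\C)$ is closed in the locally compact Hausdorff space $G(\C)^d$, it is itself locally compact Hausdorff and thus Baire (Theorem~\ref{Baire}). Because $\Gamma$ is countable, the set
$$X_i(\C) \;\setminus\; \bigcup_{\gamma \in \Gamma \setminus \Gamma_i} N_{i,\gamma}$$
is nonempty; any $\phi_i$ chosen from it is a homomorphism $\Gamma \to G(\C)$ with $\ker \phi_i = \Gamma_i$ exactly.

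Finally, I would assemble the diagonal representation $\Phi := (\phi_1, \ldots, \phi_m) : \Gamma \to G(\C)^m$, whose kernel is $K := \bigcap_{i=1}^m \Gamma_i$. Since $G^m$ is a linear algebraic group, $\Gamma/K$ is linear in the usual sense, so it is enough to control $K$. In the $G$-free case, any hypothetical $\gamma \in K \cap \Gamma^\bullet$ would, by $G$-freeness, admit some $\psi : \Gamma \to G(\C)$ with $\psi(\gamma) \neq 1$; but $\psi$ lies in some component $X_j$, witnessing $\gamma \notin \Gamma_j$, a contradiction. Hence $K = \{1\}$, $\Phi$ is faithful, and $\Gamma$ is linear. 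In the almost $G$-free case the same argument only excludes nontorsion elements from $K$, so $K$ is a torsion normal subgroup of $\Gamma$ while $\Gamma/K \hookrightarrow G(\C)^m$ is linear, giving the claimed extension.

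The only subtle step is the Baire-category argument, which hinges on two standard facts about complex varieties: $X_i(\C)$ is a Baire space (by local compactness inherited from $G(\C)^d$), and a proper Zariski-closed subset of an irreducible variety is nowhere dense in the analytic topology. The remaining obstacle one might anticipate — that a single component of $\Hom(\Gamma,G)$ need not detect every nontrivial element of $\Gamma$ — is precisely what forces the product-over-components construction, but once that product is in hand, the kernel calculation is immediate.
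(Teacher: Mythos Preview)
Your proof is correct and follows essentially the same strategy as the paper's: decompose $\Hom(\Gamma,G)$ into its finitely many irreducible components, run a Baire-category argument on each to extract a representation with minimal kernel, and then take the product. The only cosmetic difference is that the paper indexes, for each component $\Omega$, the set $S_\Omega$ of $\gamma$ for which $e_{G,\gamma}(\Omega)$ is \emph{dense} in $G$ (a stronger condition than your $\gamma\notin\Gamma_i$, but the $S_\Omega$ notation is introduced here because it is reused later, e.g.\ in Theorem~\ref{dim-criterion}).
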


\begin{proof}
Since $\Gamma$ is finitely generated, its representation variety is a subvariety of $G^{\rank(\Gamma)}$, and thus has finitely many irreducible components (see Appendix \ref{AlgebraicGeometryBackgroundAppendix}).
Let $\Phi$ be the finite collection of irreducible components of $\Hom(\Gamma, G)$.
For each nontorsion $\gamma \in \Gamma$, there exists some $\Omega \in \Phi$ such that $e_{G,\gamma}(\Omega)$ is dense in $G$.
For any $\Omega \in \Phi$, let $S_\Omega$ denote the collection of $\gamma$ for which this density condition holds.  
Since $\Gamma$ is almost $G$-free, $\bigcup_{\Omega \in \Phi} S_\Omega$ consists of all nontorsion elements of $\Gamma$.

By the Baire category theorem (Theorem~\ref{Baire}), the intersection
$$
\bigcap_{\gamma \in S_\Omega} \{\phi\in \Omega\mid \phi(\gamma)\neq 1\}
$$
is always a non-empty subset (note that $\bigcap_{\gamma \in \emptyset} \{\phi\in \Omega\mid \phi(\gamma)\} = \Omega$).
For each $\Omega \in \Phi$, we select a single $\phi_\Omega$ from this non-empty set.
The kernel of the natural homomorphism from $\Gamma$ to 
\begin{equation}
\label{multiG}
\prod_{\Omega \in \Phi} \Gamma / \ker (\phi_\Omega)
\end{equation}
contains only torsion elements since no $\gamma\in S_\Omega$ lies in $\ker(\phi_\Omega)$.
On the other hand, each $\Gamma / \ker (\phi_\Omega)$ can be realized as a subgroup of $G$ via $\phi_\Omega$.
Thus, $\Gamma$ is an extension of a linear group by a torsion group, and if $\Gamma$ is torsion-free, it is linear.
\end{proof}

The next lemma demonstrates that virtually solvable groups that are in $\Lf$ are actually virtually free abelian.
It is possible, however, for a solvable group that is not virtually abelian to be in $\Lt$:

\begin{rem} \label{LamplighterGroupRemark}
Let $\Gamma$ be the Lamplighter group $\Z/2\Z \wr \Z$. Set $\Delta = \op_{i \in \Z} \Z/2\Z$ to be the base group of $\Gamma$ so $\Gamma/\Delta \cong \Z$.
Every element in $\Delta$ is of order $2$. It follows from Lemma \ref{DetectableLemma} that the lamplighter group is almost $G$-free for any semisimple group $G$.
\end{rem}

\begin{lemma} \label{SolvableGroupLemma}
Let $\Gamma$ be a finitely generated group.
If $\Gamma$ is virtually solvable and $G$-free for some semisimple group $G$ then 
it is virtually free abelian.
\end{lemma}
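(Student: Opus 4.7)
The plan is to pass to a finite-index normal subgroup of $\Gamma$ which is solvable, torsion-free, and embedded in the upper-triangular Borel of some $\GL_m(\C)$, and then to show its commutator subgroup must be trivial by exploiting $G$-freeness.

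By Lemma~\ref{TorsionFreeLemma}, $\Gamma$ is torsion-free. Choose a finite-index normal solvable subgroup $\Gamma_0 \trianglelefteq \Gamma$ (which exists since $\Gamma$ is virtually solvable); by Lemma~\ref{ContainmentLemma}, $\Gamma_0$ is also $G$-free. By Proposition~\ref{LinearEmbeddingLemma}, $\Gamma_0$ is linear. Fix an embedding $\Gamma_0 \hookrightarrow \GL_m(\C)$ and let $H$ be its Zariski closure, a solvable linear algebraic group. Replace $\Gamma_0$ by the finite-index normal subgroup $\Gamma_1 := \Gamma_0 \cap H^0(\C)$; it is still $G$-free (Lemma~\ref{ContainmentLemma}), and its Zariski closure is the connected solvable $H^0$. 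By Lie--Kolchin, after conjugation $H^0 \leq B_m$, the upper-triangular Borel, so $\Gamma_1 \leq B_m(\C)$ and $[\Gamma_1, \Gamma_1] \leq [B_m, B_m] = U_m(\C)$, a torsion-free nilpotent group.

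Showing $[\Gamma_1, \Gamma_1] = 1$ completes the proof: then $\Gamma_1$ is finitely generated torsion-free abelian, hence isomorphic to $\Z^r$, so $\Gamma$ is virtually free abelian. Assume for contradiction that $[\Gamma_1, \Gamma_1] \neq 1$. The plan is to produce a non-identity $a \in \Gamma_1^\bullet$ and a proper closed subvariety $V \subseteq G$ so that $\phi(a) \in V$ for every homomorphism $\phi\colon \Gamma_1 \to G(\C)$, contradicting $G$-freeness of $\Gamma_1$.

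The main obstacle, and the essential content of the argument, is constructing such a pair $(a,V)$. The strategy I would pursue is to take $V$ to be the non-regular-semisimple locus of $G$, a proper closed subvariety since $G$ is semisimple of positive dimension. For a judicious choice of $a \in [\Gamma_1, \Gamma_1]^\bullet$, if $\phi(a)$ were regular semisimple, then $Z_G(\phi(a))$ would be a maximal torus $T$, and the abelianness of $\phi([\Gamma_1, \Gamma_1])$ (inherited from the fact that $\phi(\Gamma_1)$ is an image of a solvable group with abelian second derived quotient in the triangular embedding) would force $\phi([\Gamma_1, \Gamma_1]) \subseteq T$; a standard conjugation argument then shows $\phi(\Gamma_1) \subseteq N_G(T)$, and composing with $N_G(T) \twoheadrightarrow W$ gives a finite quotient through which the conjugation action of $\Gamma_1/[\Gamma_1, \Gamma_1]$ on $[\Gamma_1, \Gamma_1]$ must factor. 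Since $\Gamma_1$ may be assumed non-virtually-abelian (otherwise the conclusion holds), this infinite-to-finite compression should produce a contradiction, exactly as in the model case of $\Z^n \rtimes_A \Z$ with infinite-order action $A$, where the analogous argument forces $\phi(\Z^n)$ into the unipotent radical. Making this work uniformly for every $\phi$ and for a solvable $\Gamma_1$ of arbitrary derived length is the delicate step; it may require first passing to a further uniform finite-index subgroup of $\Gamma_1$ using the finiteness of the irreducible components of $\Hom(\Gamma_1, G)$, and then choosing $a$ inside its commutator subgroup.
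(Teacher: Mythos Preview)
Your setup through the embedding $\Gamma_1 \hookrightarrow B_m(\C)$ is fine but ultimately irrelevant: the fact that $[\Gamma_1,\Gamma_1] \subseteq U_m(\C)$ concerns one fixed embedding into $\GL_m$, and tells you nothing about an arbitrary homomorphism $\phi\colon \Gamma_1 \to G(\C)$. The last paragraph tries to bridge this gap via the non-regular-semisimple locus and a Weyl-group argument, but the chain breaks at the first link: there is no reason $\phi([\Gamma_1,\Gamma_1])$ should be abelian (in your $\GL_m$ embedding $[\Gamma_1,\Gamma_1]$ is only nilpotent), so you cannot conclude $\phi([\Gamma_1,\Gamma_1]) \subseteq Z_G(\phi(a))$, and the subsequent claim $\phi(\Gamma_1)\subseteq N_G(T)$ has no justification either. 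The ``infinite-to-finite compression'' step is left entirely as a hope.

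The paper's argument is both simpler and avoids all of this. Take $V$ to be the \emph{unipotent locus} of $G$, which is a proper closed subvariety since $G$ is semisimple. Following the proof of Proposition~\ref{LinearEmbeddingLemma}, for each irreducible component $\Omega$ of $\Hom(\Gamma,G)$ choose $\phi_\Omega\in\Omega$ so that $\phi_\Omega(\gamma)$ is non-unipotent for every $\gamma\in S_\Omega$. Now apply Lie--Kolchin \emph{inside $G$}: the Zariski closure $Q_\Omega$ of $\phi_\Omega(\Gamma)$ is virtually solvable, so $Q_\Omega^\circ$ lies in a Borel $B_\Omega\subset G$. Passing to the single finite-index subgroup $\Gamma^\circ = \bigcap_\Omega \phi_\Omega^{-1}(Q_\Omega^\circ(\C))$, every $\phi_\Omega$ sends $[\Gamma^\circ,\Gamma^\circ]$ into $[B_\Omega,B_\Omega]$, which consists of unipotents. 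Hence any nontrivial $\gamma\in[\Gamma^\circ,\Gamma^\circ]$ would lie in no $S_\Omega$, contradicting $G$-freeness. So $\Gamma^\circ$ is abelian, and since $\Gamma$ is torsion-free and finitely generated, $\Gamma^\circ$ is free abelian. The point you were circling around---using the finitely many components to pass to a uniform finite-index subgroup---is exactly right, but it should be coupled with Lie--Kolchin applied to the images in $G$, not to an auxiliary linear embedding.
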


\begin{proof}
The unipotent elements in $G$ form a proper closed subvariety as $G$ is semisimple.
Following the proof of Proposition~\ref{LinearEmbeddingLemma},
there exist homomorphisms $\phi_\Omega\colon \Gamma\to G(\C)$, one for each component of
$\Hom(\Gamma,G)$, such that $\phi_\Omega(\gamma)$ is
not unipotent for all $\gamma\in S_{\Omega}$.
If $Q_\Omega$ denotes the Zariski closure of $\phi_\Omega(\Gamma)$,
then each $Q_\Omega$ is virtually solvable, so each identity component $Q_\Omega^\circ$ is connected solvable and therefore
contained in a Borel subgroup $B_\Omega\subset G$.   If 
$$\Gamma^\circ:= \Gamma\cap \bigcap_\Omega \phi_\Omega^{-1}(Q_\Omega^\circ(\C)),$$
then $\Gamma^\circ$ is of finite index in $\Gamma$.
If $\gamma\in[\Gamma^\circ,\Gamma^\circ]$,
then 
$$\phi_\Omega(\gamma)\in [B_\Omega(\C),B_\Omega(\C)]$$
is unipotent, so $\gamma\not\in S_\Omega$ for all $\Omega$.  It follows that $\gamma=1$, which means
that  $\Gamma$ is virtually abelian.  
\end{proof}

We now present a version of Tits alternative for groups in $\Lf$.
Note that the condition that $\Gamma$ be finitely generated is needed.
For instance, $\Q$ is in $\Lf$ (the image of any element is dense in a maximal torus) but is not virtually free abelian.

\begin{proof}[Proof of Theorem~\ref{ContainsFreeSubgroupTheorem}]
By Tits' alternative and Proposition~\ref{LinearEmbeddingLemma}, we have that $\Gamma$ is virtually solvable or contains a nonabelian free group.
We can therefore assume that $\Gamma$ is virtually solvable.
By Lemma \ref{SolvableGroupLemma}, the group $\Gamma$ is virtually free abelian.
Since $\Gamma$ is torsion-free by Lemma \ref{TorsionFreeLemma}, we are done by Theorem \ref{VirtuallyAbelianNotSLnFree} below.
\end{proof}

\section{Determining when a group satisfies Borel's Theorem}
\label{Whether}

\subsection{Conditions for torsion-free groups}
\label{DeterminingGFreeSection}

In this section, we present several variants of Borel's original proof that free groups are $G$-free for all semisimple groups $G$.

\begin{lemma}
\label{SL-reduction}
Let $\Gamma$ be a finitely generated group.  If $\Gamma$ is $\SL_n$-free for all $n\ge 2$, then $\Gamma$ is $G$-free for all semisimple $G$.
\end{lemma}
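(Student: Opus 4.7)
The plan is to reduce, via the standard structural decompositions of semisimple algebraic groups, to the case where $G$ is a simply connected simple algebraic group, where type $A$ is handled by hypothesis. First, if $\pi\colon \tilde G \to G$ denotes the simply connected cover, then $\pi$ is a surjective finite morphism, so any proper closed subvariety $V \subsetneq G$ pulls back to a proper closed subvariety $\pi^{-1}(V) \subsetneq \tilde G$ of the same dimension. Composing any representation $\tilde\phi\colon \Gamma \to \tilde G(\C)$ with $\pi$ then shows that $\tilde G$-freeness implies $G$-freeness, reducing to the simply connected case.

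Next, a simply connected semisimple group decomposes as a direct product $\tilde G \cong G_1 \times \cdots \times G_k$ of simply connected simple algebraic groups, so by Lemma~\ref{ProductLemma} it suffices to prove that $\Gamma$ is $H$-free for every simply connected simple $H$. When $H$ is of type $A_n$ we have $H \cong \SL_{n+1}$ and there is nothing to do. For the remaining types (classical types $B$, $C$, $D$ and the exceptional types) I would fix a faithful representation $\rho\colon H \hookrightarrow \SL_N$, viewed as a closed embedding, and try to show that the restriction of the $\SL_N$-evaluation map to $\Hom(\Gamma,H) \subseteq \Hom(\Gamma,\SL_N)$ remains dominant onto $H$, by exploiting the defining invariants of $H$ in $\SL_N$ (a symmetric or skew-symmetric bilinear form for the classical types, an invariant tensor for the exceptional types) together with a Borel-style perturbation argument inside $\Hom(\Gamma,H)$.

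The main obstacle is precisely this last step. The hypothesis $\SL_N$-freeness yields representations $\phi\colon \Gamma \to \SL_N(\C)$ with $\phi(\gamma)$ in any prescribed open subset of $\SL_N$, but a generic such $\phi$ does not factor through the embedded copy of $H$, and the strict closed subvariety $\Hom(\Gamma,H) \subsetneq \Hom(\Gamma,\SL_N)$ could in principle evaluate at $\gamma$ into a proper subvariety of $H$ even when the ambient evaluation on $\Hom(\Gamma,\SL_N)$ is dominant in $\SL_N$. The real content of the lemma therefore lies in a type-by-type analysis -- essentially an internal version of Borel's perturbation proof run inside $\Hom(\Gamma,H)$ for each non-type-$A$ simple group $H$ -- which is not an automatic consequence of the $\SL_n$-freeness hypothesis and is the step I expect to be technically delicate.
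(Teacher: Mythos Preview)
Your first two reductions (pass to the simply connected cover, then factor as a product of simply connected simple groups and invoke Lemma~\ref{ProductLemma}) are correct and match the paper. The gap is in your treatment of a simple simply connected group $H$ not of type $A$.

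You try to go \emph{outward}, embedding $H\hookrightarrow \SL_N$ and hoping to control $e_{H,\gamma}$ by restricting the $\SL_N$ picture. As you yourself note, this fails: $\Hom(\Gamma,H)$ is a proper closed subvariety of $\Hom(\Gamma,\SL_N)$, and density of $e_{\SL_N,\gamma}$ says nothing about the restriction. You then propose to redo Borel's perturbation argument type by type inside each $H$, but that is precisely the content of Borel's theorem itself and would not use the hypothesis in any essential way.

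The paper goes \emph{inward} instead. The key observation is that the image of $e_{H,\gamma}$ is automatically invariant under $H$-conjugation, so it is enough to find a homomorphism $\phi\colon \prod_i \SL_{n_i}\to H$ whose image contains a maximal torus $T$ of $H$: then the union of $H$-conjugates of $\phi\bigl(\prod_i \SL_{n_i}\bigr)$ contains every semisimple element of $H$ and is therefore Zariski-dense. Since $\Gamma$ is $\prod_i \SL_{n_i}$-free by hypothesis and Lemma~\ref{ProductLemma}, the closure of $e_{H,\gamma}(\Hom(\Gamma,H))$ contains $\phi\bigl(\prod_i \SL_{n_i}\bigr)$, hence all its conjugates, hence all of $H$. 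Such a $\phi$ exists uniformly: for $H$ simply connected, the product of the root $\SL_2$'s attached to the simple roots maps into $H$ with image containing $T$ (the simple coroots span the cocharacter lattice of $T$). No type-by-type analysis is needed.
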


\begin{proof}
Let $e_{G,\gamma}\colon \Hom(\Gamma,G)\to G$ denote the evaluation map at $\gamma$.
For any homomorphism $\phi\colon H\to G$, $e_{G,\gamma}$  contains $\phi(e_{H,\gamma}(\Hom,\Gamma,H))$ and is closed under conjugation by $G$.
Therefore, if $e_{H,\gamma}$ has Zariski-dense image  in $H$, and the set of $G$ conjugates of $\phi(H)$ is Zariski-dense in $G$, then
$e_{G,\gamma}(\Hom(\Gamma,G))$ is Zariski-dense in $G$.
Also, if $e_{H_i,\gamma}$ has Zariski-dense image for $i=1,\ldots,n$, then $e_{\prod_i H_i,\gamma}$ has Zariski-dense image (see Lemma \ref{ProductLemma}).  
Every semisimple group admits a surjective homomorphism from a product of simply connected groups which are simple modulo center, and
every semisimple group $G$ which is simple modulo center admits a surjective homomorphism from a product of groups of type $\SL_{n_i}$ which contains a maximal torus of $G$
(and therefore has the property that the union of $G$-conjugates is Zariski-dense).

\end{proof}

\begin{thm}
\label{G-free}
Let $\Gamma$ be a finitely generated group such that
\begin{enumerate}
\item $\Hom(\Gamma,\SL_n)$ is irreducible for all $n\ge 2$.
\item $\Gamma$ is $\SL_2$-free.
\item $\Gamma$ is $\SL_3$-free.
\end{enumerate}
Then $\Gamma$ is $G$-free for all semisimple groups $G$.
\end{thm}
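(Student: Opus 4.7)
My plan is to apply Lemma~\ref{SL-reduction} to reduce the problem to showing that $\Gamma$ is $\SL_n$-free for every $n \ge 2$, and then induct on $n$: the cases $n=2,3$ are exactly hypotheses (2) and (3). For $n \ge 4$, fix $\gamma \in \Gamma^\bullet$ and let $Z \subseteq \SL_n$ denote the Zariski closure of $e_{\SL_n,\gamma}(\Hom(\Gamma,\SL_n))$. Hypothesis (1) implies that $Z$ is irreducible, and the action of $\SL_n$ on $\Hom(\Gamma,\SL_n)$ by postcomposition with inner automorphisms makes $Z$ invariant under $\SL_n$-conjugation. The goal is to prove $Z = \SL_n$.

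The first step is the reduction that $Z = \SL_n$ is equivalent to $\chi(Z) = \mathbb{A}^{n-1}$, where $\chi \colon \SL_n \to \SL_n /\!/ \SL_n \cong \mathbb{A}^{n-1}$ is the characteristic polynomial map; indeed, for any proper conjugation-invariant closed subvariety of $\SL_n$, the image under $\chi$ is a proper subvariety of $\mathbb{A}^{n-1}$, because regular semisimple elements are Zariski dense in $\SL_n$ and each such element's $\chi$-fiber equals its conjugacy class, so a conjugation-invariant subvariety with full $\chi$-image must contain every regular semisimple class. To establish the required density of $\chi(Z)$, I would work with the Levi subgroup $L := S(\GL_2 \times \GL_{n-2}) \subset \SL_n$ and construct homomorphisms $\phi = (\mu_1 \cdot \psi_1) \oplus (\mu_2 \cdot \psi_2) \colon \Gamma \to L \hookrightarrow \SL_n$, where $\psi_1 \colon \Gamma \to \SL_2$ and $\psi_2 \colon \Gamma \to \SL_{n-2}$ come from (2) and the inductive hypothesis while $\mu_1, \mu_2 \colon \Gamma \to \mathbb{G}_m$ are characters of $\Gamma$ subject to $\mu_1^2 \mu_2^{n-2} = 1$. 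The characteristic polynomials of $\psi_i(\gamma)$ vary densely in $T_{n_i}/W_{n_i}$, and the single character parameter $c := \mu_1(\gamma)$ supplies the missing dimension, giving $(n_1-1) + (n_2-1) + 1 = n-1 = \dim(T_n/W_n)$ effective parameters. A direct factorization argument --- given a target monic $q$ of degree $n$ with constant term $(-1)^n$, factor $q = q_1 q_2$ with $\deg q_i = n_i$, extract $c$ via $c^2 = q_1(0)$, and read off the $\psi_i$-data from $q_1$ and $q_2$ --- confirms that the resulting map is dominant.

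The main obstacle is ensuring $\Gamma$ admits sufficiently many characters to carry out this construction. One observes that (1) combined with (2) forces $\Gamma^{\mathrm{ab}}$ to have positive rank: otherwise the Zariski tangent space to the irreducible scheme $\Hom(\Gamma,\SL_n)$ at the trivial representation $\phi_0$, namely $\Hom(\Gamma^{\mathrm{ab}},\mathfrak{sl}_n)$, would vanish, forcing $\phi_0$ to be an isolated reduced point --- hence, by irreducibility, the whole scheme --- contradicting the existence of nontrivial $\SL_2$-representations granted by (2). Combined with the divisibility of $\mathbb{C}^*$, this lets one realize any prescribed $c \in \mathbb{C}^*$ as $\mu_1(\gamma)$ whenever $\gamma$ has non-torsion image in $\Gamma^{\mathrm{ab}}$. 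The delicate remaining case is $\gamma$ with torsion image in $\Gamma^{\mathrm{ab}}$ (in particular $\gamma \in [\Gamma,\Gamma]$), where character values at $\gamma$ are forced into a finite set of roots of unity, so the Levi construction alone sweeps out only a codimension-one subvariety of $\mathbb{A}^{n-1}$; this must be supplemented, for instance by running the parallel Levi construction with $S(\GL_3 \times \GL_{n-3})$, by using tensor products $\psi \otimes \psi'$ of independent $\SL_k$-representations, or by symmetric-power constructions, to recover the missing coordinate and complete the density argument.
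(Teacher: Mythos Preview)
Your overall shape---reduce to $\SL_n$ via Lemma~\ref{SL-reduction}, then induct using that the closure $Z$ of $e_{\SL_n,\gamma}(\Hom(\Gamma,\SL_n))$ is irreducible and conjugation-invariant---matches the paper. But the paper's induction step is much simpler than yours, and it avoids precisely the gap you ran into.

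The paper never uses characters of $\Gamma$. Instead it uses two untwisted block-diagonal embeddings. First, $\SL_{n-1}\hookrightarrow\SL_n$ via $A\mapsto A\oplus 1$: since $\Gamma$ is $\SL_{n-1}$-free by induction, $Z$ contains this copy of $\SL_{n-1}$, and hence (by conjugation-invariance) contains the entire codimension-$1$ hypersurface $H=\{g\in\SL_n:1\text{ is an eigenvalue of }g\}$. Irreducibility of $Z$ then forces $Z=H$ or $Z=\SL_n$. Second, $\SL_{n-2}\times\SL_2\hookrightarrow\SL_n$: since $\Gamma$ is $\SL_{n-2}$-free and $\SL_2$-free, Lemma~\ref{ProductLemma} gives $\SL_{n-2}\times\SL_2\subset Z$, and a generic element there has no eigenvalue equal to $1$. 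This rules out $Z=H$, so $Z=\SL_n$.

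Your character-twisting scheme is more elaborate and leaves a genuine gap: when $\gamma$ has torsion image in $\Gamma^{\mathrm{ab}}$ (in particular when $\gamma\in[\Gamma,\Gamma]$), all character values at $\gamma$ are constrained to finitely many roots of unity, and you only list possible remedies (the $S(\GL_3\times\GL_{n-3})$ Levi, tensor products, symmetric powers) without carrying any of them out. The paper's argument sidesteps this entirely because the embeddings it uses are homomorphisms of the target group, not twists at the source, and therefore work uniformly for every $\gamma\neq 1$. Your detour through the positivity of $\mathrm{rank}\,\Gamma^{\mathrm{ab}}$, while correct, is likewise unnecessary. In short: drop the characters; use the two block embeddings and the dichotomy forced by irreducibility.
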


\begin{proof}
Let $e_{n,\gamma}\colon \Hom(\Gamma,\SL_n)\to \SL_n$ denote the evaluation map at $\gamma$.
Let $X_{n,\gamma}$ denote the closure of the image of $e_{n,\gamma}$.  By (1), $X_{n,\gamma}$ is irreducible for all $n\ge 2$ and $\gamma\in\Gamma$.
For $\gamma\neq 1$, by (2) and (3), $X_{2,\gamma} = \SL_2$ and $X_{3,\gamma} = \SL_3$.  We use induction on $n$ to prove $X_{n,\gamma} = \SL_n$ for all $n\ge 2$.

If $n\ge 4$, the obvious embedding $\SL_{n-1}\subset \SL_n$ and the induction hypothesis imply $X_{n,\gamma}$ contains $\SL_{n-1}$, and of course it is invariant under conjugation in $\SL_n$.  The Zariski-closure of the set of all $\SL_n$ conjugates of $\SL_{n-1}$ is the codimension $1$ subvariety of $\SL_n$ consisting of elements for which $1$ is an eigenvalue.  As $X_{n,\gamma}$ is irreducible, it  consists either of this subvariety or of all $\SL_n$.  Applying the induction hypothesis to the embedding $\SL_{n-2}\times \SL_2\subset \SL_n$, we obtain $\SL_{n-2}\times \SL_2\subset X_{\gamma,n}$, which proves $X_{\gamma,n} = \SL_n$.  The theorem now follows from Lemma~\ref{SL-reduction}.

\end{proof}

The conditions of Theorem~\ref{G-free} are in general not easy to check.  For condition (1), we have the following proposition:

\begin{prop}
\label{Count}
Suppose $\Gamma$ is a group with $d$ generators and $r$ relations 
and for all $n\ge 2$ and for each prime $p$ sufficiently large,
$$|\Hom(\Gamma,\SL_n(\F_{p^m}))| = (1+o(1))p^{m(n^2-1)(d-r)}.$$
Then condition (1) holds for $\Gamma$.
\end{prop}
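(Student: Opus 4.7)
The plan is to deduce the geometric structure of $X := \Hom(\Gamma, \SL_n)$, viewed as an affine scheme of finite type over $\Z$, from the $\F_{p^m}$-point counts in the hypothesis via the Lang--Weil estimate.

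Writing $\Gamma = \langle x_1, \ldots, x_d \mid w_1, \ldots, w_r\rangle$ realizes $X$ as the closed subscheme of $\SL_n^d$ cut out by the $r$ matrix equations $w_i(g_1, \ldots, g_d) = I$. Since $\det w_i = 1$ automatically, each relation contributes at most $n^2 - 1$ scalar equations, so Krull's height theorem forces every irreducible component of $X_\C$ to have dimension at least $N := (d - r)(n^2 - 1)$.

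Next I would spread $X$ out over $\Spec \Z$ and invoke standard constructibility to conclude that for all primes $p$ sufficiently large, the geometric fiber $X_{\overline{\F}_p}$ has the same number of absolutely irreducible components and the same dimension function as $X_\C$. Let $e$ be the number of top-dimensional absolutely irreducible components of $X_\C$, and choose $m$ to be a common multiple of the Frobenius-orbit lengths of these components; such $m$ can be taken bounded by $e!$, hence independent of $p$. Then for $p$ large all $e$ top-dimensional components are defined over $\F_{p^m}$, and Lang--Weil yields
$$|X(\F_{p^m})| = e \cdot p^{m \dim X_\C} + O\bigl(p^{m(\dim X_\C - 1/2)}\bigr).$$
Comparing with the hypothesis $|X(\F_{p^m})| = (1 + o(1)) p^{mN}$ as $p \to \infty$ forces both $\dim X_\C = N$ and $e = 1$. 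Combined with the Krull lower bound, this shows every component of $X_\C$ has dimension exactly $N$ and that there is a unique top-dimensional one; hence $X_\C$ is irreducible, which is condition (1) of Theorem~\ref{G-free}.

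The main obstacle I anticipate is the uniformity in $p$ underlying the middle paragraph: one must know that the component structure of $X_{\overline{\F}_p}$ (both count and dimensions) eventually matches that of $X_\C$, and that the implied constant in the Lang--Weil error term can be chosen uniformly in $p$. Both are standard consequences of flat spread-out over an open subscheme of $\Spec \Z$ and of the Lang--Weil estimate applied to a single scheme of finite type over $\Z$, but deserve explicit references in a clean write-up.
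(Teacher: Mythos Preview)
Your argument is correct and uses the same three ingredients as the paper's proof --- a Lang--Weil count to pin down a unique top-dimensional component, the Krull/fiber-dimension lower bound to rule out lower-dimensional components, and a spreading-out transfer between characteristic $0$ and characteristic $p$ --- but you assemble them in a different order and run the Lang--Weil limit in a different variable. The paper first fixes a large prime $p$, applies its Theorem~\ref{converse-LW} (Lang--Weil with $m\to\infty$) directly to $\Hom(\Gamma,\SL_{n,\F_p})$, combines this with the fiber-dimension inequality (Theorem~\ref{dim-variation} applied to the multi-word map $\SL_{n,\F_p}^d\to\SL_{n,\F_p}^r$) to conclude geometric irreducibility over $\F_p$, and only then invokes Theorem~\ref{generic-irred} to pass to $\C$. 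You instead establish the Krull lower bound over $\C$, spread the component structure out to $\overline{\F}_p$, fix $m$ (bounded by $e!$), and let $p\to\infty$. Your route needs the Lang--Weil error term uniform in $p$ and reads the hypothesis as an asymptotic in $p$ for fixed $m$; the paper's route needs only classical Lang--Weil for a single $\F_p$-variety and reads the hypothesis as $m\to\infty$ for each fixed large $p$. Both readings are compatible with how the hypothesis is actually verified downstream (Propositions~\ref{CompactOrientedSurfacesSatisfy1} and~\ref{ProjectivePlanesSatisfy1}), where the $o(1)$ is as $q=p^m\to\infty$.
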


\begin{proof}
By Theorem~\ref{generic-irred}, it suffices to prove that the characteristic $p$ representation scheme 
$\Hom(\Gamma,\SL_{n,\F_p})$ is geometrically irreducible for all $p$ sufficiently large.
By Theorem~\ref{converse-LW} and the estimate for $|\Hom(\Gamma,\SL_{n,\F_p})(\F_{p^m})|$, it follows that there is a unique
geometric component of $\Hom(\Gamma,\SL_{n,\F_p})$ of dimension $(d-r)\dim \SL_n$ and that all other geometric components 
are of lower dimension.  

Let 
$$\Gamma = \langle x_1,\ldots,x_d: R_1,\ldots,R_r\rangle$$
be a presentation of $\Gamma$ with $d$ generators and $r$ relations.
Applying Theorem~\ref{dim-variation} to the multi-word map 
$$(R_1,\ldots,R_r)\colon \SL_{n,\F_p}^d\to \SL_{n,\F_p}^r,$$
the minimum dimension of a geometric component of $\Hom(\Gamma,\SL_{n,\F_p})$  is at least 
$(d-r)\dim \SL_{n,\F_p}$, and we are done.
\end{proof}

The point-counting hypothesis of Proposition~\ref{Count} can be verified for some interesting $1$-relator groups.

\begin{prop}  \label{CompactOrientedSurfacesSatisfy1}
Let $\Gamma = S_g$, the fundamental group of an oriented surface of genus $g \geq 2$.
Then for each prime $p$ sufficiently large,
$$|\Hom(\Gamma,\SL_n(\F_{p^m}))| = (1+o(1))p^{m(n^2-1)(2g-1)}.$$
\end{prop}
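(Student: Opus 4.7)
The plan is to apply the Frobenius--Mednykh character formula for surface groups: for any finite group $G$,
$$|\Hom(\pi_1(S_g),G)| \;=\; |G|^{2g-1}\sum_{\chi\in\mathrm{Irr}(G)}\chi(1)^{\,2-2g},$$
which follows from character orthogonality applied to Frobenius's count of tuples satisfying $\prod_{i=1}^g[a_i,b_i]=1$. I take $G=\SL_n(\F_q)$ with $q=p^m$. Since $|\SL_n(\F_q)|$ is a monic polynomial in $q$ of degree $n^2-1$, one has $|\SL_n(\F_q)|^{2g-1}=q^{(n^2-1)(2g-1)}(1+O(q^{-1}))$, and the trivial character contributes exactly $1$ to the sum. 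Thus the proposition reduces to the assertion
$$\sum_{\chi\neq 1}\chi(1)^{-(2g-2)}\;=\;o(1)\qquad\text{as }p\to\infty.$$

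To establish this $o(1)$ bound I would use two standard inputs. First, the number of conjugacy classes of $\SL_n(\F_q)$, hence the number of irreducible characters, is bounded by a polynomial in $q$ whose degree depends only on $n$. Second, by the Landazuri--Seitz minimal-degree theorem, every nontrivial irreducible character $\chi$ of $\SL_n(\F_q)$ satisfies $\chi(1)\ge c_n\,q^{n-1}$ for some positive constant $c_n$ depending only on $n$ (for $n=2$ the classical bound is $\chi(1)\ge (q-1)/2$). Since $g\ge 2$ forces $2g-2\ge 2$, the tail sum is bounded by a polynomial in $q$ times $q^{-2(n-1)}$, which vanishes as $p\to\infty$.

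The hypothesis $g\ge 2$ is essential: for $g=1$ the exponent $2-2g$ vanishes and the character sum degenerates to $|\mathrm{Irr}(G)|$, consistent with the very different representation theory of $\Z^2$. I do not anticipate a serious technical obstacle, since the character-count bound and the Landazuri--Seitz bound are both classical and uniform in $p$ for fixed $n$; the only care required is to track that the implicit $O$-constants may depend on $n$ but not on $q$.
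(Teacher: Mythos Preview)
Your approach is essentially the same as the paper's: both invoke the Frobenius--Mednykh formula
\[
|\Hom(S_g,G)|=|G|^{2g-1}\sum_{\chi\in\mathrm{Irr}(G)}\chi(1)^{2-2g}
\]
and reduce the statement to showing that the contribution of the nontrivial characters is $o(1)$. The only difference is in how that tail is handled: the paper cites a black-box result of Liebeck and Shalev asserting $\sum_{\chi\neq 1}\chi(1)^{-(2g-2)}\to 0$ for $G(\F_q)$ simply connected semisimple of fixed rank, whereas you give a direct argument via the Landazuri--Seitz lower bound together with a polynomial bound on the number of characters. Your route is a bit more elementary and self-contained; the paper's citation has the advantage of applying uniformly to all simply connected semisimple $G$ at once (which they note recovers Li's irreducibility theorem in that generality).

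One point to tighten: you assert that the tail is ``a polynomial in $q$ times $q^{-2(n-1)}$, which vanishes as $p\to\infty$,'' but this only follows if the degree of that polynomial is strictly less than $2(n-1)$. You should make this explicit: the number of conjugacy classes (hence of irreducible characters) of $\SL_n(\F_q)$ is $O(q^{n-1})$, and since $n-1<2(n-1)$ for $n\ge 2$ the product is $O(q^{-(n-1)})\to 0$. With that check recorded, your argument is complete.
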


\begin{proof}
By a theorem of Frobenius \cite[Proposition 4.1]{MR2369828}, the number of 
ways of representing an element $h$ of a finite group $H$ as $xyx^{-1}y^{-1}$ for $x,y\in H$ is
$$|H|\sum_\chi \frac{\chi(h)}{\chi(1)},$$
where the sum is taken over all irreducible characters $\chi$ of $H$.
By the generalized orthogonality relation
\cite[Th.~2.13]{MR0460423} and induction on $n$, we obtain
$$|\Hom(S_g,H)| = |H|^{2g-1}\sum_\chi \frac{1}{\chi(1)^{2g-2}}.$$
By a result of Martin Liebeck and Aner Shalev \cite[Th.~1.1]{MR2116277}, we have 
$$\lim \sum_{\chi\neq 1} \frac{1}{\chi(1)^{2g-2}} = 0,$$
where the limit is taken over any sequence of groups of the form $G(\F_q)$, where $G$ is simply connected and semisimple
of fixed rank.  This implies the result of Jun Li \cite{MR1206154}
that $\Hom(S_g,G)$ is irreducible not only for $G$ of the form $\SL_{n,\C}$ but for all simply connected
semisimple groups $G$. 
\end{proof}

Likewise, we obtain

\begin{prop} \label{ProjectivePlanesSatisfy1}
If $\Gamma = \langle x_1,\ldots,x_m|x_1^2 x_2^2\cdots x_m^2\rangle,$
then for each prime $p$ sufficiently large,
$$|\Hom(\Gamma,\SL_n(\F_{p^m}))| = (1+o(1))p^{m(n^2-1)(m-1)}.$$
\end{prop}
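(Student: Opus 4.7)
The plan is to mimic Proposition~\ref{CompactOrientedSurfacesSatisfy1}, replacing Frobenius's commutator-counting formula by the Frobenius--Schur theorem. Temporarily denote by $k$ the number of generators of $\Gamma$, to avoid the clash with the field exponent $m$. For a finite group $H$, let $\nu(\chi)\in\{-1,0,1\}$ be the Frobenius--Schur indicator of an irreducible character $\chi$. The Frobenius--Schur theorem says that
$$\psi(h) := |\{x\in H : x^2 = h\}| = \sum_\chi \nu(\chi)\chi(h),$$
and $|\Hom(\Gamma,H)|$ equals the $k$-fold convolution $\psi^{*k}$ evaluated at the identity.

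Next I would use the convolution identity $\chi*\chi' = \delta_{\chi,\chi'}(|H|/\chi(1))\chi$, which is a restatement of the orthogonality of the central primitive idempotents of $\C[H]$, to compute by induction
$$\psi^{*k} = \sum_\chi \nu(\chi)^k\Bigl(\frac{|H|}{\chi(1)}\Bigr)^{k-1}\chi,$$
so that
$$|\Hom(\Gamma,H)| = |H|^{k-1}\sum_\chi \frac{\nu(\chi)^k}{\chi(1)^{k-2}}.$$
Specializing to $H=\SL_n(\F_{p^m})$ one has $|H| = (1+o(1))\,p^{m(n^2-1)}$. The trivial character contributes $1$ to the character sum, and since $|\nu(\chi)|\le 1$, the remaining tail is bounded by $\sum_{\chi\ne 1}\chi(1)^{-(k-2)}$, which for $k\ge 3$ is $o(1)$ by the Liebeck--Shalev estimate \cite[Th.~1.1]{MR2116277} applied with exponent $s=k-2\ge 1$ as $p^m\to\infty$ with $n$ fixed. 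Combining these yields $|\Hom(\Gamma,H)| = (1+o(1))\,p^{m(n^2-1)(k-1)}$, which is the claimed estimate.

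The only conceptually new step is the identification of the Frobenius--Schur theorem as the correct replacement for Frobenius's commutator formula; after that, the character-theoretic manipulation is exactly parallel to the oriented case. The one visible obstacle is that the tail fails to be $o(1)$ when $k=2$: it collapses to $\sum_{\chi\ne 1}\nu(\chi)^2$, which counts the nontrivial self-dual irreducibles of $H$ and grows with $|H|$. This is precisely the Klein bottle case, which is genuinely excluded from $\Lt$ by Theorem~\ref{SurfaceGroupTheorem}, so the breakdown at $k=2$ is not a defect of the method but a reflection of the underlying geometry, and the proposition is stated in the range $k\ge 3$ where it is actually needed.
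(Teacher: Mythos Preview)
Your proof is correct and follows essentially the same route as the paper: both start from the Frobenius--Schur count $|\{x:x^2=h\}|=\sum_\chi \iota(\chi)\chi(h)$, convolve via generalized orthogonality to obtain $|\Hom(\Gamma,H)|=|H|^{k-1}\sum_\chi \iota(\chi)^k/\chi(1)^{k-2}$, and then invoke the Liebeck--Shalev bound to kill the tail. Your explicit remark that the argument breaks at $k=2$ (the Klein bottle) is a nice observation not spelled out in the paper's proof.
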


\begin{proof}
 The formula counting 
homomorphisms from $\Gamma$ to a finite group $H$ is
$$|\Hom(\Gamma,H)| = |H|^{m-1}\sum_\chi \frac{\iota(\chi)^m\chi(g)}{\chi(1)^{m-1}},$$
where $\iota$ is the Frobenius-Schur indicator.  The proof is essentially the same as before, the starting point being the
classical theorem of Frobenius and Schur \cite[Th.~4.5]{MR0460423}, that for any finite group $H$, the number of solutions in $H$ of $x^2=h$ is
$$\sum_\chi \iota(\chi) \chi(h).$$
\end{proof}

\begin{prop} \label{GetCondition2Prop}
In Theorem \ref{G-free}, under assumption (1), we can deduce assumption (2) from the assertion that
there exists a homomorphism $i_2\colon \Gamma\to \SL_2(\C)$ such that $i_2(\gamma)$ is not unipotent for $\gamma\neq 1$.
\end{prop}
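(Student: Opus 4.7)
The plan is to show that, for each $\gamma\in\Gamma^\bullet$, the Zariski closure $\overline{X_\gamma}$ of the image of $e_{\SL_2,\gamma}\colon \Hom(\Gamma,\SL_2)\to \SL_2(\C)$ equals all of $\SL_2(\C)$. Two general observations are immediate. First, assumption (1) guarantees that $\Hom(\Gamma,\SL_2)$ is irreducible, so $\overline{X_\gamma}$ is irreducible. Second, $X_\gamma$ is invariant under $\SL_2(\C)$-conjugation, since conjugating a homomorphism by a fixed element of $\SL_2(\C)$ is again a homomorphism. The proof will combine these two structural features with a single qualitative consequence of the existence of $i_2$.

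The key step is to consider the trace morphism $\tau_\gamma := \trace\circ e_{\SL_2,\gamma}\colon \Hom(\Gamma,\SL_2)\to \mathbb{A}^1$. Because the source is irreducible, the image of $\tau_\gamma$ is an irreducible constructible subset of $\mathbb{A}^1$, and any such subset is either a single point or a cofinite (hence Zariski dense and open) subset of $\mathbb{A}^1$. The trivial homomorphism $\Gamma\to \{I\}\subset \SL_2(\C)$ places $2$ in the image of $\tau_\gamma$, while $\trace(i_2(\gamma))\neq 2$ by the hypothesis that $i_2(\gamma)$ is not unipotent. Hence $\tau_\gamma$ is non-constant, and its image $U\subseteq \mathbb{A}^1$ is cofinite.

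To conclude, for each $c\in U\setminus\{\pm 2\}$ I would pick $\phi\in\Hom(\Gamma,\SL_2)$ with $\trace(\phi(\gamma))=c$; then $\phi(\gamma)$ is regular semisimple, and its $\SL_2(\C)$-conjugacy class equals the whole fiber $\trace^{-1}(c)$. Conjugation-invariance of $X_\gamma$ therefore yields $\trace^{-1}(c)\subseteq X_\gamma$, and taking the union over $c$ we get $X_\gamma\supseteq \trace^{-1}(U\setminus\{\pm 2\})$. The latter is a dense open subvariety of $\SL_2(\C)$, since its complement is a finite union of $2$-dimensional trace fibers. Hence $\overline{X_\gamma}=\SL_2(\C)$, which is precisely assumption (2).

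The only point requiring care is the non-constancy of $\tau_\gamma$; this is where the interaction between assumption (1) (forcing the trivial representation and $i_2$ to share the unique irreducible component of $\Hom(\Gamma,\SL_2)$) and the non-unipotence hypothesis on $i_2$ is essential. Once that is established, the remainder of the argument is elementary geometry of $\SL_2$-conjugacy classes, so I do not anticipate any serious obstacle.
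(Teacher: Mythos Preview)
Your argument is correct and follows essentially the same route as the paper: both use assumption (1) to know $\Hom(\Gamma,\SL_2)$ is irreducible, observe that $\trace\circ e_{\SL_2,\gamma}$ is non-constant because $i_2$ gives a value $\neq 2$, deduce the trace image is Zariski-dense in $\mathbb{A}^1$, and then invoke conjugation-invariance. Your version is simply more explicit---you spell out the role of the trivial representation in witnessing the value $2$, and you unpack the last step by noting that each fiber $\trace^{-1}(c)$ for $c\neq\pm 2$ is a single conjugacy class---whereas the paper compresses this into the sentence ``any closed subvariety of $\SL_2$ which is invariant under conjugation and has a Zariski-dense set of traces is all of $\SL_2$.''
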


\begin{proof}
The condition on $i_2$ is equivalent to $\trace(i_2(\gamma))\neq 2$, so that $\trace\circ e_{2,\gamma}$ is a non-constant function in $\rho\in \Hom(\Gamma,\SL_2)$.
As $\Hom(\Gamma,\SL_2)$ is irreducible, it follows that the image of $\trace\circ e_{2,\gamma}$ is Zariski-dense in the affine line.  Any closed subvariety of $\SL_2$ which
is invariant under conjugation and has a Zariski-dense set of traces is all of $\SL_2$.  Note that if $i_2$ maps $\Gamma$ to $\SU(2)$, it suffices to assume that it is injective.
\end{proof}

If $D$ is a central division algebra of degree $n$ 
over a field $K$, following standard notation \cite[Table II]{MR0224710}, we denote by $\SL_1(D)$ the algebraic group over $K$ 
whose $K$-points give the elements of $D^\times$ of reduced norm $1$, while its $\bar K$-points give $\SL_n(\bar K)$.
We recall that reduced trace gives a map from $D\to K$, and applying reduced trace to all integer powers of an element of $D$, we see that
the power sums of the eigenvalues of any element of $\SL_1(D)\subset \SL_n(\bar K)$ lie in $K$.  If $K$ is of characteristic zero, this implies that
the characteristic polynomial of every element of $\SL_1(D)$ has coefficients in $K$.  
If any element of $\lambda\in K$ is an eigenvalue of $\alpha\in \SL_1(D)$, then $\alpha-\lambda$ is not invertible, so it is zero, and $\alpha = \lambda$
lies in the center of $\SL_1(D)$.
If $D$ is of degree $3$ and the characteristic polynomial of
$\alpha\in \SL_1(D)$
has a multiple root $r$, then $r\in K$.  Thus every element of $\SL_1(D)$ is central or regular semisimple.

Using this observation, we can replace (3)
in Theorem~\ref{G-free} as follows:

\begin{prop}
\label{Div-Alg}
Under hypotheses (1) and (2) of Theorem~\ref{G-free}, if there exists a degree 3 division algebra $D\subset M_3(\C)$ such that
$\Hom(\Gamma,\SL_1(D))\subset \Hom(\Gamma,\SL_3(\C))$ is Zariski-dense in $\Hom(\Gamma,\SL_3)$, then (3) follows and therefore
$\Gamma$ is $G$-free for all semisimple groups $G$.
\end{prop}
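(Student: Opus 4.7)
The plan is to verify hypothesis (3) of Theorem~\ref{G-free} and then invoke that theorem. Fix $\gamma\in\Gamma^\bullet$ and let $X_{3,\gamma}$ denote the Zariski closure of the image of $e_{3,\gamma}\colon\Hom(\Gamma,\SL_3)\to\SL_3$; by (1), $X_{3,\gamma}$ is irreducible, and it is manifestly invariant under $\SL_3$-conjugation. The goal is to show $X_{3,\gamma}=\SL_3$. Introducing the adjoint quotient $\pi\colon\SL_3\to\mathbb{A}^2$ given by $g\mapsto(\trace(g),\trace(g^{-1}))$, I would split on whether $\pi(X_{3,\gamma})$ is Zariski-dense in $\mathbb{A}^2$.

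If it is dense, then for $c$ in a dense open subset of $\mathbb{A}^2$ the fiber $\pi^{-1}(c)$ is a single closed regular semisimple conjugacy class; the nonempty closed conjugation-invariant subset $X_{3,\gamma}\cap\pi^{-1}(c)$ of that homogeneous space must therefore equal all of $\pi^{-1}(c)$, and sweeping $c$ forces $X_{3,\gamma}=\SL_3$ as desired.

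If it is not dense, then $\pi(X_{3,\gamma})$ is contained in a proper irreducible closed subvariety $C\subsetneq\mathbb{A}^2$. Here I would invoke hypothesis (2) through the block embedding $\iota\colon\SL_2\hookrightarrow\SL_3$, $A\mapsto\text{diag}(A,1)$: $\SL_2$-freeness of $\Gamma$ gives $X_{3,\gamma}\supset\iota(\SL_2)$, and a direct trace computation yields $\pi(\iota(\SL_2))=L$ where $L$ is the diagonal line $\{t=c\}\subset\mathbb{A}^2$. Since $C$ is irreducible and contains $L$, I get $C=L$. A short elementary symmetric function computation identifies $\pi^{-1}(L)$ with the codimension-one subvariety $H\subset\SL_3$ of elements having $1$ as an eigenvalue, while the $\SL_3$-conjugates of $\iota(\SL_2)$ sweep out a Zariski-dense subset of $H$. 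Combining inclusions gives $X_{3,\gamma}=H$.

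The main obstacle will be ruling out this second case, and that is where the division-algebra hypothesis does its essential work. For any $\phi\in\Hom(\Gamma,\SL_1(D))$ I would have $\phi(\gamma)\in H\cap\SL_1(D)$, so $\phi(\gamma)-I\in D$ is non-invertible in $M_3(\C)$ and hence non-invertible in $D$; since $D$ is a division algebra, this forces $\phi(\gamma)=I$. Since $\phi(\gamma)=I$ cuts out a Zariski-closed subset of $\Hom(\Gamma,\SL_3)$ and $\Hom(\Gamma,\SL_1(D))$ is Zariski-dense there by assumption, it follows that $\phi(\gamma)=I$ for \emph{every} $\phi\in\Hom(\Gamma,\SL_3)$. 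But by (2) there exists $\psi\in\Hom(\Gamma,\SL_2)$ with $\psi(\gamma)\neq I$, and then $\iota\circ\psi(\gamma)\neq I$, a contradiction. This forces the dense case, completes the verification of (3), and Theorem~\ref{G-free} then delivers $G$-freeness for all semisimple $G$.
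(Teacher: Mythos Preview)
Your proof is correct and follows essentially the same path as the paper's: use $\SL_2$-freeness and conjugation-invariance to see that $X_{3,\gamma}$ contains the codimension-one locus $H=\{g:\det(g-I)=0\}$, then use the division-algebra hypothesis to rule out $X_{3,\gamma}=H$ by forcing $\phi(\gamma)=I$ for all $\phi\in\Hom(\Gamma,\SL_1(D))$, hence (by Zariski density) for all $\phi\in\Hom(\Gamma,\SL_3)$, contradicting the existence of a nontrivial $\SL_2$-representation. The only difference is cosmetic: the paper dispenses with your adjoint-quotient case split, since once $H\subset X_{3,\gamma}$ is established, irreducibility immediately gives $X_{3,\gamma}\in\{H,\SL_3\}$, so your ``dense'' case and the accompanying fiber argument are unnecessary.
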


\begin{proof}
Let $\gamma\in\Gamma$ be a non-trivial element.  As $X_{2,\gamma} = \SL_2$, we see that $X_{3,\gamma}$ contains the codimension $1$ subvariety of 
$\SL_3$ consisting of matrices for which $1$ is an eigenvalue.  It suffices to prove that there is at least one point of $X_{3,\gamma}$ for which $1$ is not an eigenvalue.  However, for $\rho\in \Hom(\Gamma,\SL_1(D))\subset \Hom(\Gamma,\SL_3(\C))$, $1$ can be an eigenvalue if and only if $\rho(\gamma) = 1$.
If $\rho(\gamma) = 1$ for all $\rho\in \Hom(\Gamma,\SL_1(D))$, Zariski-density implies the same for all 
$\rho\in \Hom(\Gamma,\SL_1(D))$, contrary to the non-triviality of $X_{3,\gamma}$.
\end{proof}

A related criterion for (3) is the following:

\begin{prop}
\label{Local}
Under hypotheses (1) and (2) of Theorem~\ref{G-free}, if there exists a degree 3 division algebra $D$ over a characteristic zero local field $K$
such that $\Hom(\Gamma,\SL_1(D))$ contains a regular point of $\Hom(\Gamma,\SL_3(\bar K))$, then (3) follows and therefore
$\Gamma$ is $G$-free for all semisimple groups $G$.
\end{prop}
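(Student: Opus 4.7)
The plan is to reduce to Proposition~\ref{Div-Alg}: I will use the regular-point hypothesis to show that $\Hom(\Gamma,\SL_1(D))(K)$ is Zariski-dense in the $\bar K$-variety $\Hom(\Gamma,\SL_3)_{\bar K}$, after which the argument of that proposition carries over almost verbatim.

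Since $\rho_0$ is a regular $\bar K$-point of $\Hom(\Gamma,\SL_3)_{\bar K} = \Hom(\Gamma,\SL_1(D))_{\bar K}$, the $K$-scheme $\Hom(\Gamma,\SL_1(D))$ is smooth at the $K$-rational point $\rho_0$. The $K$-analytic implicit function theorem then realizes a neighborhood of $\rho_0$ in $\Hom(\Gamma,\SL_1(D))(K)$ as a $K$-analytic manifold of dimension $d := \dim_{\rho_0}\Hom(\Gamma,\SL_3)$. By hypothesis (1), $\Hom(\Gamma,\SL_3)$ is geometrically irreducible, so $\Hom(\Gamma,\SL_1(D))_{\bar K}$ is irreducible of dimension $d$. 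A top-dimensional $K$-analytic submanifold cannot be contained in the $\bar K$-points of any proper Zariski-closed subvariety of an irreducible variety: restricting any nonzero defining equation of such a subvariety to a local analytic chart at $\rho_0$ and invoking the identity theorem for $K$-analytic functions gives a contradiction. Thus $\Hom(\Gamma,\SL_1(D))(K)$ is Zariski-dense in $\Hom(\Gamma,\SL_3)_{\bar K}$; this is the main technical step.

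With Zariski-density in hand, the argument of Proposition~\ref{Div-Alg} applies: for $\gamma \in \Gamma^\bullet$, hypothesis (2) together with the block embedding $\SL_2 \hookrightarrow \SL_3$ forces $X_{3,\gamma}$ to contain the codimension-one hypersurface in $\SL_3$ on which $1$ is an eigenvalue, and produces some $\rho_1 \in \Hom(\Gamma,\SL_3)$ with $\rho_1(\gamma) \neq 1$, making $\{\rho : \rho(\gamma) = 1\}$ a proper closed subvariety of $\Hom(\Gamma,\SL_3)_{\bar K}$. Zariski-density now yields $\rho \in \Hom(\Gamma,\SL_1(D))(K)$ with $\rho(\gamma) \neq 1$, and the eigenvalue analysis reviewed immediately before Proposition~\ref{Div-Alg} (any element of $\SL_1(D)$ with an eigenvalue in $K$ is central, while a non-identity central element of $\SL_1(D) \subset \SL_3(\bar K)$ has all eigenvalues equal to a nontrivial cube root of unity) shows that such a $\rho(\gamma)$ cannot have $1$ as an eigenvalue. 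Hence $X_{3,\gamma}$ meets the complement of the hypersurface, forcing $X_{3,\gamma} = \SL_3$ by irreducibility; this establishes (3), and Theorem~\ref{G-free} combined with Lemma~\ref{SL-reduction} yields $G$-freeness for every semisimple $G$.
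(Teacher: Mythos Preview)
Your proposal is correct and follows essentially the same route as the paper: establish Zariski-density of $\Hom(\Gamma,\SL_1(D))(K)$ in $\Hom(\Gamma,\SL_3)_{\bar K}$ via the regular $K$-point and the $K$-analytic implicit function theorem, then invoke Proposition~\ref{Div-Alg}. The paper compresses your first paragraph into a single citation of Theorem~\ref{density} (and an identification of $\bar K$ with $\C$), whereas you unpack that density argument explicitly and make the use of hypothesis~(1) for irreducibility visible; the remainder is the same.
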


\begin{proof}
Applying Theorem~\ref{density} to $X = X_{3,\gamma}$ and
identifying $\bar K$ and $\C$ by the axiom of choice, the proposition now follows from Proposition~\ref{Div-Alg}.
\end{proof}

\begin{prop} \label{Condition3}
Let $\pi_1$ be the fundamental group of the connected sum of three projective planes.
Assuming hypotheses (1) and (2) in Theorem \ref{G-free} hold for $\pi_1$, hypothesis (3) holds as well.
\end{prop}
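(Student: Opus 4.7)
The plan is to apply Proposition~\ref{Local}, taking the \emph{trivial representation} as the required smooth $\SL_1(D)$-point. Fix any characteristic-zero non-archimedean local field $K$ (for concreteness $K=\Q_p$) and any central degree-$3$ division algebra $D$ over $K$ (for instance, the unique such $D$ over $\Q_p$), and let $\rho_0\colon \pi_1\to \SL_1(D)(K)$ be the trivial representation. Since $\rho_0\in \Hom(\pi_1,\SL_1(D))$ for free, it suffices to verify that $\rho_0$, viewed as a point of $\Hom(\pi_1,\SL_3(\bar K))$, is regular, i.e.\ smooth.

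First, by hypothesis~(1) the scheme $\Hom(\pi_1,\SL_3)$ is irreducible, and by Proposition~\ref{ProjectivePlanesSatisfy1} (applied with three generators) combined with Theorem~\ref{converse-LW} its dimension is $(3-1)(3^2-1)=16$. The Zariski tangent space at $\rho_0=(1,1,1)$ is the kernel of the differential at the identity of the relator map $\mu\colon \SL_3^3\to \SL_3$, $\mu(x,y,z)=x^2y^2z^2$. A Taylor expansion of $\exp(tX)^2\exp(tY)^2\exp(tZ)^2$ at $t=0$ gives
\[
d\mu|_{(1,1,1)}(X,Y,Z)=2(X+Y+Z),
\]
a surjection $\mathfrak{sl}_3^3 \twoheadrightarrow \mathfrak{sl}_3$. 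Hence the tangent space at $\rho_0$ has dimension $3\cdot 8-8=16$, matching the global dimension of the irreducible variety $\Hom(\pi_1,\SL_3)$. Therefore $\rho_0$ is a smooth point, Proposition~\ref{Local} applies, and condition~(3) follows.

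I foresee no substantive obstacle. The reason this shortcut works for $\pi_1$, and would not work for (say) an orientable surface group of genus $g\ge 2$, is that the relator $a^2b^2c^2$ has non-zero abelianization $2(a+b+c)$, so its linearization at the identity is surjective and the trivial representation sits as a smooth point of the representation variety. For an orientable surface group the relator $\prod[a_i,b_i]$ has trivial abelianization, the trivial representation is deeply singular, and a genuinely different strategy is required (as in Proposition~\ref{CompactOrientedSurfacesSatisfy1}).
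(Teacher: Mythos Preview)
Your proof is correct and follows essentially the same route as the paper: apply Proposition~\ref{Local} with the trivial representation as the required point in $\SL_1(D)$, and verify regularity there by computing that the differential of the relator map $(x,y,z)\mapsto x^2y^2z^2$ at the identity is $(X,Y,Z)\mapsto 2(X+Y+Z)$, which is surjective. The only cosmetic difference is that the paper deduces regularity of the fiber directly from this surjectivity via Theorem~\ref{smooth-vs-dominant}, whereas you compute the tangent-space dimension and match it against the global dimension; the paper's route is slightly more economical since it avoids separately establishing that $\dim\Hom(\pi_1,\SL_3)=16$.
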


\begin{proof}
To apply Proposition~\ref{Local}, we observe that
the trivial representation $\pi_1\to \SL_3(\C)$ is a nonsingular point of $\Hom(\pi_1,\SL_3)$.
Indeed, identifying $\Hom(\Gamma,\SL_3)$ with 
$$e_{\SL_3,x_1^2x_2^2x_3^2}^{-1}(1),$$
it suffices by Theorem~\ref{smooth-vs-dominant} to note that the morphism $\SL_3^3\to \SL_3$ given by the word $x_1^2 x_2^2 x_3^2$ induces a
surjective map on tangent spaces at  $(1,1,1)$.
The induced map on tangent spaces $\sl_3^3\to \sl_3$ sends $(X_1,X_2,X_3)$ to $2X_1+2X_2+2X_3$ and is therefore surjective.
\end{proof}

We can now apply the previous results to obtain a new class of groups in $\Lf$.
The proof presented below, specifically in the three projective plane case, can be modified to give a new proof that oriented surface groups are in $\Lf$.

\begin{prop} \label{ThreeProjectivePlanesProp}
Let $S$ be the connected sum of three or more projective planes.
Then $\pi_1(S, \cdot)$ is in $\Lf$.
\end{prop}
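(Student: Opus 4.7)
The plan is to apply Theorem~\ref{G-free}, whose conclusion of $G$-freeness for every connected semisimple $G$, combined with torsion-freeness of $\pi_1(S)$ (all closed surface groups of negative Euler characteristic are torsion-free), places $\pi_1(S)$ in $\Lf$. When $m \ge 4$, the result is already contained in the remarks after Theorem~\ref{SurfaceGroupTheorem}: Baumslag's theorem embeds $\pi_1(N_m)$ into a direct product of free groups, whence Lemmas~\ref{FreeLemma}, \ref{DirectProductLemma}, and \ref{ContainmentLemma} deliver the conclusion. Thus the interesting case is $m=3$, the one not covered by residual freeness in a direct product of free groups.

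For $m=3$, I check the three hypotheses of Theorem~\ref{G-free} applied to $\pi_1 = \langle a,b,c \mid a^2 b^2 c^2\rangle$. Hypothesis (1), irreducibility of $\Hom(\pi_1,\SL_n)$ for all $n\ge 2$, follows from Proposition~\ref{ProjectivePlanesSatisfy1} together with Proposition~\ref{Count}. Hypothesis (3) is precisely the conclusion of Proposition~\ref{Condition3}, which kicks in as soon as (1) and (2) are available. So hypothesis (2)---that $\pi_1$ is $\SL_2$-free---is the substantive step and the main obstacle.

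To handle (2) I invoke Proposition~\ref{GetCondition2Prop}: under (1), it suffices to produce a single homomorphism $i_2\colon \pi_1 \to \SL_2(\C)$ for which no element of $\pi_1^\bullet$ lands in the unipotent variety. The natural source is a hyperbolic structure: since $\chi(N_3) = -1 < 0$, the surface $N_3$ admits one, whose holonomy gives a faithful discrete representation $\pi_1 \hookrightarrow \mathrm{Isom}(\BH^2) = \PGL_2(\R) \subset \PSL_2(\C)$ in which every nontrivial element acts loxodromically---no elliptics (by torsion-freeness of $\pi_1$), no parabolics (by cocompactness). The delicate point is lifting from $\PGL_2(\R)$ to $\SL_2(\C)$: the preimage of $\PGL_2(\R)$ in $\SL_2(\C)$ is the double cover $\SL_2(\R) \cup i \cdot \{M \in \GL_2(\R) : \det M = -1\}$, and we need lifts of $a,b,c$ for which the relator $a^2 b^2 c^2$ evaluates to $I$ rather than $-I$. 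A sign-bookkeeping (elements of the second component square into $\SL_2(\R)$ with an extra factor of $-1$ from $i^2$) shows this is achievable, after possibly twisting by a homomorphism $\pi_1 \to \{\pm 1\}$ or adjusting the hyperbolic structure within the Teichm\"uller space of $N_3$. Once the lift exists, images of orientation-preserving elements are hyperbolic in $\SL_2(\R)$ with $|\trace|>2$, while images of orientation-reversing elements lie in $i\cdot \SL_2(\R)^{-}$ and have purely imaginary trace; in either case, $\trace(i_2(\gamma)) \ne 2$.

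Having verified (1), (2), and (3), Theorem~\ref{G-free} yields that $\pi_1$ is $G$-free for every semisimple $G$, and the torsion-freeness of $\pi_1$ then places it in $\Lf$.
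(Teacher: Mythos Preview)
Your overall strategy coincides with the paper's: reduce $m\ge 4$ to residual freeness, and for $m=3$ verify the three hypotheses of Theorem~\ref{G-free} using Propositions~\ref{Count}, \ref{ProjectivePlanesSatisfy1}, \ref{GetCondition2Prop}, and \ref{Condition3}. Your trace analysis at the end (hyperbolic elements have $|\trace|>2$; orientation-reversing lifts have purely imaginary trace) is also correct and is a clean way to see that no nontrivial image is unipotent once a lift exists.

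The gap is precisely the lifting step, which you flag as ``delicate'' but do not actually establish. Choosing lifts $\tilde a=iA$, $\tilde b=iB$, $\tilde c=iC$ with $A,B,C\in\GL_2(\R)$ of determinant $-1$, one finds $\tilde a^2\tilde b^2\tilde c^2=-A^2B^2C^2$, and the sign of $A^2B^2C^2\in\{\pm I\}$ is \emph{independent} of the choice of lifts (replacing $\tilde a$ by $-\tilde a$ leaves $\tilde a^2$ unchanged). Thus twisting by a homomorphism $\pi_1\to\{\pm 1\}$ does nothing: it modifies a lift if one already exists, but cannot manufacture one. Likewise, the lifting obstruction lives in $H^2(\pi_1;\Z/2)\cong\Z/2$ and is locally constant on $\Hom(\pi_1,\PGL_2(\R))$, so moving within the connected Teichm\"uller space of $N_3$ cannot change it either. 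What is actually needed is the nontrivial fact that the Fuchsian holonomy of a closed non-orientable hyperbolic surface \emph{does} lift to $\SL_2(\C)\cap\langle i\rangle\GL_2(\R)$; the paper invokes the theorem of Sepp\"al\"a and Sorvali \cite[Theorem~6]{MR1241817} for exactly this purpose. Once you cite (or reprove) that result, your argument goes through and matches the paper's.
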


\begin{proof}
Let $\pi_1 = \pi_1(S, \cdot)$.
We break the proof into two cases, depending on $k$.
\begin{enumerate}
\item If $k = 3$, then $\pi_1 = \left< x, y, x^2 y^2 z^2 \right>$.
This group is known not to be residually free \cite{MR0162838}.
By Theorem \ref{G-free} and Propositions \ref{Count}, \ref{ProjectivePlanesSatisfy1}, \ref{Local}, and \ref{Condition3} it suffices to show that there exists some map $\phi: \pi_1 \to \SL_2(\C)$ such that $\phi(\pi_1)$ does not contain any unipotent elements.
To do this, identify $\pi_1$ with $P$, the cocompact subgroup of isometries of the hyperbolic plane which is the universal covering space 
of the connected sum of three real projective planes. 
Let $\pi$ denote the homomorphism from 
$$\SL_2(\C) \cap \langle i\rangle \GL_2(\R)$$
to the group of all M\"obius transformations given by Sepp\"al\"a and Sorvali \cite[\S6]{MR1241817}.  Then
there exists a lift $\tilde P$ of $P$ so that $\pi$ gives an isomorphism $\tilde P\to P$  \cite[Theorem 6]{MR1241817}).
We claim that $\tilde P$ has no nontrivial unipotent elements. 
Indeed, $P$ contains, as a subgroup of index 2, a discrete and cocompact subgroup of $\PSL(2, \R)$ (the orientation-preserving isometries of the hyperbolic plane). 
Let $X$ be a nontrivial unipotent element in $\tilde P$. Then $\pi(X^2) = \pi(X)^2$ is unipotent and lies inside $\PSL(2, \R)$ and so $\pi(X^2)$ is parabolic. This is impossible, as no discrete and cocompact subgroup of $\PSL(2, \R)$ contains a parabolic element (\cite[Theorem 4.2.1]{MR1177168}). It follows that $\tilde P$ does not contain any nontrivial unipotent elements, as desired.

\item If $k > 3$, then by \cite{MR0215903}, $\pi_1$ is residually free. Thus, by Lemma \ref{DetectableLemma}, $\pi_1 \in \Lf$.
\end{enumerate}
\end{proof}

\subsection{Conditions for general groups}
\label{weakGSection}
\newcommand{\y}{\mathbf{y}}
\newcommand{\z}{\mathbf{z}}

In this section, we present some conditions for a finitely generated group $\Gamma$ to be almost $G$-free for all semisimple groups $G$.
Our main theorem is a variant of the results in the previous section.  This variation is forced on us. We cannot expect that $\Hom(\Gamma,\SL_n)$ will be connected
if $\Gamma$ has non-trivial torsion elements. Indeed, there are typically several different conjugacy classes of elements of $\SL_n(\C)$ of given order $m>1$.  We therefore try to pin down the class of the image of each torsion conjugacy class.  We assume that
$\Gamma$ has finitely many classes of non-trivial elements of finite order, and we denote by $x_1,\ldots,x_k$ representatives of each class.

If $G$ is a semisimple group defined over $\C$ and $y\in G(\C)$ is of finite order, it is semisimple, and its conjugacy class is therefore
closed.  Since $G$ is irreducible, its conjugacy classes are likewise irreducible.  If $\y = (y_1,\ldots,y_m)$ is an $m$-tuple of semisimple
elements of $G(\C)$, we denote by $V(G,\y)$ the closed subvariety
$$V(G,\y) := e_{x_1,\ldots,x_k}^{-1}(C_1\times\cdots\times C_k)\subset \Hom(\Gamma,G),$$ 
where $C_i$ is the conjugacy class of $y_i$, and 
$$e_{x_1,\ldots,x_k}\colon \Hom(\Gamma,G)\to G^k$$
is the multiword evaluation map.
If $G = \SL_n$, we denote $V(\SL_n,\y)$ by $V_n(\y)$ for brevity.

\begin{thm} \label{GeneralToolTheorem}
Let $\Gamma$ be a finitely generated group with finitely many conjugacy classes of non-trivial elements of finite order,
represented by elements $x_1,\ldots,x_k$.
For each $n\ge 2$, let $Y_n\subset \SL_n(\C)^k$ be a non-empty set  of $k$-tuples of semisimple elements.
Suppose:
\begin{enumerate}
\item For each $n$ and $\y\in Y_n$, the variety $V_n(\y)$ is irreducible.
\item For each $n\ge 3$ and each $\y\in Y_n$, 
there exists $\y'\in Y_{n-1}$ and a 1-dimensional character $\chi$ of $\Gamma$ such that
$\chi(x_i)y'_i\oplus \chi(x_i)^{1-n}$ is conjugate
in $\SL_n(\C)$ to $y_i$ for $i=1,2,\ldots,k$.
\item For each $n\ge 4$ and each $\y\in Y_n$, there exists $\y^1\in Y_{n-2}$, $\y^2\in Y_2$ and 
1-dimensional characters $\chi_1,\chi_2$ of $\Gamma$ such that $\chi_1(x_i)y^1_i\oplus \chi_2(x_i) y^2_i$ is conjugate in $\SL_n(\C)$
to $y_i$ for $i=1,2,\ldots,k$.
\item For each $\y\in Y_2$, there exists an injective homomorphism $\Gamma\to \SL_2(\C)$ in $V_2(\y)$
such that $\rho(\Gamma)$ contains no non-trivial unipotent element.
\item For each $\y\in Y_3$, there exists a regular point in $V_3(\y)$
corresponding to a homomorphism $\Gamma\to \SL_3(\bar K)$ whose image lies in $\SL_1(D)$ for some degree $3$
division algebra $D$ over a $\ell$-adic field $K$.
\end{enumerate}

Then $\Gamma$ is almost $G$-free for all semisimple $G$.
\end{thm}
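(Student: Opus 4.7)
The plan is to imitate the architecture of Theorem~\ref{G-free}, working inside the fibres $V_n(\y)$ of the multi-word evaluation map in place of the whole representation variety $\Hom(\Gamma,\SL_n)$. Fix a nontorsion $\gamma\in\Gamma$, $n\ge 2$, and $\y\in Y_n$, and let $X_{n,\gamma,\y}$ denote the Zariski closure of the image of the restricted evaluation map
$$e_{n,\gamma}\colon V_n(\y)\longrightarrow \SL_n.$$
Hypothesis (1) makes $V_n(\y)$ irreducible, so $X_{n,\gamma,\y}$ is irreducible; since post-composition by inner automorphisms of $\SL_n$ preserves each conjugacy class $C_i$, the variety $V_n(\y)$ is $\SL_n$-stable and $X_{n,\gamma,\y}$ is conjugation-invariant. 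The goal is to prove $X_{n,\gamma,\y}=\SL_n$ for every such triple $(n,\y,\gamma)$; once this is achieved, the argument of Lemma~\ref{SL-reduction}, whose proof works element by element using only Zariski-density of each evaluation map, transfers verbatim to the almost $G$-free setting and delivers the conclusion.

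The base case $n=2$ mirrors Proposition~\ref{GetCondition2Prop}: hypothesis (4) supplies a faithful $\rho_0\in V_2(\y)$ whose image contains no non-trivial unipotent, so $\trace(\rho_0(\gamma))\neq 2$; hence $\trace\circ e_{2,\gamma}$ is a non-constant regular function on the irreducible $V_2(\y)$, with Zariski-dense image in $\mathbb{A}^1$, and any conjugation-invariant closed subset of $\SL_2$ with Zariski-dense trace set is all of $\SL_2$. For the inductive step $n\ge 3$, hypothesis (2) furnishes a morphism
$$\Phi_2\colon V_{n-1}(\y')\longrightarrow V_n(\y),\qquad \rho\longmapsto (\chi\cdot\rho)\oplus\chi^{1-n},$$
well-defined because $\chi(x_i)y'_i\oplus\chi(x_i)^{1-n}$ is conjugate to $y_i$ and $\det(\chi\cdot\rho)\cdot\chi^{1-n}=\chi^{n-1}\cdot\chi^{1-n}=1$. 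Combined with the inductive hypothesis $X_{n-1,\gamma,\y'}=\SL_{n-1}$, this shows that $X_{n,\gamma,\y}$ contains the $\SL_n$-conjugation closure of $\chi(\gamma)\SL_{n-1}\oplus\chi(\gamma)^{1-n}$, namely the irreducible codimension-one subvariety $W\subset\SL_n$ cut out by $\det(g-\chi(\gamma)^{1-n}I)=0$.

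For $n\ge 4$, hypothesis (3) produces a parallel morphism $V_{n-2}(\y^1)\times V_2(\y^2)\to V_n(\y)$, $(\rho_1,\rho_2)\mapsto \chi_1\rho_1\oplus\chi_2\rho_2$, so that $X_{n,\gamma,\y}$ also contains the conjugation closure of $\chi_1(\gamma)\SL_{n-2}\oplus\chi_2(\gamma)\SL_2$. A generic element of that closure has $n$ distinct eigenvalues $\chi_1(\gamma)\mu_i$ and $\chi_2(\gamma)\nu_j$ constrained only by $\prod\mu_i=\nu_1\nu_2=1$, so generically none equals the fixed scalar $\chi(\gamma)^{1-n}$; hence $X_{n,\gamma,\y}\not\subseteq W$. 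As $X_{n,\gamma,\y}$ is irreducible and strictly contains the irreducible codimension-one $W$, we conclude $X_{n,\gamma,\y}=\SL_n$. The remaining base case $n=3$ replaces (3) by (5): Theorem~\ref{density} shows that the $\ell$-adic analytic neighbourhood of the regular point $\rho_0\in V_3(\y)\cap\Hom(\Gamma,\SL_1(D))$ is Zariski-dense in $V_3(\y)$, and the argument of Propositions~\ref{Div-Alg} and~\ref{Local} is adapted to exhibit a representation $\rho$ in this neighbourhood sending $\gamma$ to a non-central, regular semisimple element of $\SL_1(D)$ whose eigenvalues are roots of an irreducible cubic over $K$, so that $\chi(\gamma)^{-2}$ is not among them.

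The principal obstacle is precisely this $n=3$ step. In Proposition~\ref{Div-Alg} the target eigenvalue is $1$, which automatically fails to be an eigenvalue of any non-central element of $\SL_1(D)$ because eigenvalues in $K$ force centrality. Here the target eigenvalue $\chi(\gamma)^{-2}$ is a possibly transcendental complex number depending on $\chi$ and $\gamma$ and need not lie in $K$. I plan to resolve this by a case analysis patterned on Proposition~\ref{Div-Alg}: the locus $Z=\{\rho\in V_3(\y):\chi(\gamma)^{-2}\text{ is an eigenvalue of }\rho(\gamma)\}$ is Zariski-closed, and supposing $Z=V_3(\y)$ the Zariski-density of $V_3(\y)\cap\Hom(\Gamma,\SL_1(D))$ would force every $\SL_1(D)$-representation $\rho$ to satisfy the eigenvalue condition. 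Inspecting the allowed forms of $\rho(\gamma)\in\SL_1(D)$---either central and equal to a cube-root-of-unity scalar matrix, or regular semisimple with eigenvalues generating a cubic extension of $K$---either forces $Z\cap\Hom(\Gamma,\SL_1(D))=\emptyset$ outright (when $\chi(\gamma)^{-2}$ lies outside $K$ and outside the relevant cubic extensions) or forces $\rho(\gamma)$ to be a single fixed scalar matrix for every such $\rho$, which after a second application of Zariski-density collapses $X_{3,\gamma,\y}$ to a point and contradicts $X_{3,\gamma,\y}\supseteq W$. Hence $Z\subsetneq V_3(\y)$, and the irreducibility argument of the previous paragraph concludes $X_{3,\gamma,\y}=\SL_3$.
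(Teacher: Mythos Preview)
Your architecture is exactly the paper's: reduce to $\SL_n$, work inside the irreducible fibres $V_n(\y)$, handle $n=2$ via condition~(4), and for $n\ge 3$ combine the codimension-one hypersurface $W$ coming from condition~(2) with a point outside $W$ produced by~(3) or~(5). Your inductive step $n\ge 4$ is in fact cleaner than the paper's (which speaks confusingly of eigenvalues ``of order~$n$'' and ``roots of unity'' where it should simply say ``equal to the fixed scalar $\chi(\gamma)^{1-n}$''), and you are right that the $n=3$ step is where the real difficulty lies---the paper itself writes ``without eigenvalue~$1$'' where it ought to say ``without eigenvalue~$\chi(\gamma)^{-2}$''.

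The gap is in your resolution of that step. Your dichotomy---either $Z\cap\Hom(\Gamma,\SL_1(D))=\emptyset$, or $\rho(\gamma)$ is a single fixed scalar for every such $\rho$---is incomplete. Under the identification $\bar K\cong\C$, the number $\chi(\gamma)^{-2}$ has some degree $d$ over $K$. When $d=3$, a non-central $\rho(\gamma)\in\SL_1(D)$ lying in $Z$ must have characteristic polynomial equal to the (fixed) minimal polynomial of $\chi(\gamma)^{-2}$ over $K$; this forces $\rho(\gamma)$ into a single \emph{regular semisimple conjugacy class} $C^*$, not a scalar, so ``collapses $X_{3,\gamma,\y}$ to a point'' is false in this case. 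The repair is to argue by dimension rather than by value: assuming $Z=V_3(\y)$, every $\rho$ in the Zariski-dense $\ell$-adic family has $\rho(\gamma)$ either central or (when $d=3$) in the fixed class $C^*$; density then gives $X_{3,\gamma,\y}\subseteq \overline{C^*}\cup Z(\SL_3)$, and since $X_{3,\gamma,\y}$ is irreducible while $\dim\overline{C^*}\le \dim\SL_3-\rk\SL_3=6<7=\dim W$, this contradicts $W\subseteq X_{3,\gamma,\y}$.
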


\begin{proof}

For any $\gamma \in \Gamma$, the evaluation map $e_{G,\gamma}$ restricts to a map $V(G,\y)\to G$ which we  denote $e_{G,\y,\gamma}$.  For any homomorphism $\phi\colon G\to H$ we have commutative a diagram
$$\xymatrix{V(G,\y) \ar[r]^\phi \ar[d]_{e_{G,\y,\gamma}}&V(H,\phi(\y)) \ar[d]^{e_{H,\phi(\y),\gamma}}\\
G \ar[r]^\phi & H},$$
so the closure $X_{H,\phi(\y),\gamma}$ of the image  of $e_{H,\phi(\y),\gamma}$ contains $\phi(X_{G,\y,\gamma})$.  Also, $V(G,\y)$ depends only on the conjugacy classes of the $y_i$ and therefore admits a conjugacy action by $G$ which the evaluation maps respect.  Thus $X_{G,\y,\gamma}$ is a closed, conjugation-invariant subvariety of $G$.  It follows that if for all $n\ge 2$ there exists $\y\in Y_n$ such 
that $X_{\SL_n,\y,\gamma} = \SL_n$, then $e_{G,\gamma}(\Hom(\Gamma,G))$ is dense in $G$ for all semisimple $G$.

For $n\ge 2$ and $\y\in Y_{n+1}$, there exists $\y'$ and $\chi$ such that
the homomorphisms 
$$\Hom(\Gamma,\SL_n)\to \Hom(\Gamma,\SL_{n+1})$$
defined by 
$$\rho_n\mapsto \rho_n\otimes \chi\oplus \chi^{\otimes -n}$$
and condition (2) guarantees that $V_n(\y')$ maps to $V_{n+1}(\y)$.  Thus, we have commutative diagrams
$$\xymatrix{V_n(\y') \ar[r]\ar[d]_{e_{n,\gamma}} &V_{n+1}(\y) \ar[d]^{e_{n+1,\gamma}} \\
\SL_n\ar[r] &\SL_{n+1}}$$
where the bottom row sends 
$$M\mapsto \chi(\gamma)M\oplus \chi(\gamma)^{-n}.$$
Let $S$ be the union of all conjugacy classes of the image of this function.
Note that any element in $S$ has at least one eigenvalue which has order $n$.

For each $\y\in Y_{n+2}$ we have homomorphisms 
$$\Hom(\Gamma,\SL_n)\times \Hom(\Gamma,\SL_2)\to \Hom(\Gamma,\SL_{n+2})$$
defined by
$$(\rho_n,\rho_2)\mapsto \rho_n\otimes \chi_1\oplus \rho_2\otimes \chi_2$$
which map $V_n(\y^1)\times V_2(\y^2)\to V_{n+2}(\y)$, and there is a commutative diagram
$$\xymatrix{V_n(\y^1)\times V_2(\y^2) \ar[r]\ar[d]_{e_{n,\gamma}\times e_{2,\gamma}} &V_{n+2}(\y) \ar[d]^{e_{n+2,\gamma}} \\
\SL_n\times \SL_2\ar[r] &\SL_{n+2},}$$
where the bottom row sends
$$M_1\times M_2\mapsto \chi_1(\gamma)M_1\oplus \chi_2(\gamma)M_2.$$
By conditions (4) and (5), the image of this map contains an element with no eigenvalues being roots of unity.
It follows that this image contains an element which is not in $S$.
Thus, we can use induction on $n$ to prove that the $e_{n,\gamma}$ all have Zariski-dense image, provided we can treat
the base cases $n=2$ and $n=3$.

For $n=2$, we use (4) together with the fact that an irreducible closed subvariety of $\SL_2$ which is a union of conjugacy classes 
and contains both $1$ and a non-unipotent elements is all of $\SL_2$.  For $n=3$, we use the fact that a conjugation-invariant closed
irreducible subvariety of $\SL_3$ which contains $\SL_2$ and some element without eigenvalue $1$ is all of $\SL_3$.  
Although the particular homomorphism $\rho_3$ whose existence is guaranteed by (5) might have a nontorsion element $\gamma$
in its kernel, the homomorphisms in an $\ell$-adic neighborhood of $\rho_3$ cannot be identitically trivial on $\gamma$.  Indeed,  they
are Zariski-dense in $V_3(\y)$ and $V_3(\y)$ contains at least one injective representation, namely the representation coming via Condition (1) from 
the injective $\SL_2$-representations of $\Gamma$ guaranteed by Condition (4).
\end{proof}

We can now prove Theorem~\ref{SevenSevenTheorem}:
if $\ell$ is a prime which is $1$ (mod $3$), then $\Gamma := \Z/\ell\Z\ast\Z/\ell\Z$ is almost $G$-free for all semisimple $G$.

\begin{proof}[Proof of Theorem~\ref{SevenSevenTheorem}]
Let $\gamma_1$ and $\gamma_2$ denote generators of the two free factors $\Z/\ell\Z$.  
By  \cite[Cor.~4.1.4]{MR2109550} and \cite[Cor.~4.1.5]{MR2109550}, there are $2(\ell-1)$ different conjugacy classes of non-trivial elements of
$\Gamma$ of finite order, and they are represented by  
$$x_1=\gamma_1,\,x_2 = \gamma_1^2,\,\ldots,\,x_{\ell-1}=\gamma_1^{\ell-1},\,x_\ell = \gamma_2,\,\ldots,\,x_{2\ell-2}=\gamma_2^{\ell-1}.$$
All of our $\y$ will be of the form 
$$(y_1,y_1^2,\ldots,y_1^{\ell-1},y_2,y_2^2,\ldots,y_2^{\ell-1}),$$
where $y_1,y_2\in \SL_n(\C)$ are
of order $\ell$, so $V_n(\y) = C_1\times C_2$ , where $C_1$ and $C_2$ denote the conjugacy classes of $y_1$ and $y_2$ respectively.
This implies Condition (1) of Theorem \ref{GeneralToolTheorem}.

In order to define $Y_n$ precisely, we first define for each integer $k\in [0,\ell-1]$ a set $B_k$ of subsets $S\subset \F_\ell$.
Since $\ell \equiv 1 ($mod $3)$, there exists a unique $3$-element subgroup $\mu_3\subset \F_\ell^\times\subset \F_\ell$.
We let $B_k$ consist of all $S \subset \F_\ell$ of cardinality $k$ which sum to $0$ and satisfy the additional condition for $k\ge 3$ that
$S$ contains the image under some affine transformation of $\mu_3$.  For any integer $n\ge 2$, we define
$A_n$ to be the set of all functions 
$$f\colon \F_\ell\to \{\lfloor n/\ell\rfloor,\lceil n/\ell\rceil\}$$ 
such that
$$\{x\in \F_\ell\mid f(x) > n/\ell\}\in B_k,$$
where $k\in [0,\ell-1]$ is the (mod $\ell$) reduction of $n$.

We fix an injective homomorphism $\psi$ from $(\F_\ell,+)$ to $\C^\times$ and
to any $f\in A_n$, we associate the conjugacy class $C_f\subset \SL_n(\C)$ consisting of $\ell$th roots of the identity
for which the eigenvalue $\psi(x)$ occurs with multiplicity $f(x)$.  
For each conjugacy class $C_f$, we select any element $\y^f$ for which $y_1=y_2$ belongs to $C_f$. We let 
$$Y_n = \{\y^f\mid f\in A_n\}.$$

Since there exists a character which takes the value $\psi(1)$ on both $\gamma_1$ and $\gamma_2$, to prove (2) it suffices to show that every element of $A_n$ is a sum of a translate of an element of $A_{n-1}$ and the translate of an element of $A_1$.
When $\ell\nmid n$, it suffices 
to prove that for $1\le k\le \ell-1$,
every element of $B_k$ is the union of a single element of $\F_\ell$ and an additive translate of an element of $B_{k-1}$.
Clearly, every $k$-element subset of $\F_\ell$ has a translate which sums to zero,
which proves the claim.  To finish (2), we note that when $\ell\mid n$, $A_n$ consists of the single element
$(n/\ell,n/\ell,\ldots,n/\ell)$ which decomposes as a sum of an element of $A_{n-1}$ and an element of $A_1$:
$$(n/\ell,n/\ell,\ldots,n/\ell) = (n/\ell-1,n/\ell,\ldots,n/\ell) + (1,0,\ldots,0).$$

Likewise, we can prove (3) when $n$ reduces to $k\ge 2$ (mod $\ell$) by showing that every element of $B_k$ is a union of a translate of an element of $B_{k-2}$ and a translate of an element of $B_2$.  As every $2$-element set is a translate of an element of $B_2$, this is clear.  So we must deal with two cases: $k=0$ and $k=1$.  In these two cases, $A_n$ has only one element, and we use the decompositions
$$(n/\ell,n/\ell,\ldots,n/\ell) = (n/\ell-1,n/\ell,\ldots,n/\ell,n/\ell-1) + (1,0,\ldots,0,1).$$
and
$$(n/\ell+1,n/\ell,\ldots,n/\ell) = (n/\ell,n/\ell,\ldots,n/\ell,n/\ell-1) + (1,0,\ldots,0,1).$$

For Condition (4), it suffices to prove that for any primitive $\ell$th root of unity $\zeta_\ell$, there exists
an injective homomorphism from $\Gamma$ to $\SL_2(\C)$ sending $\gamma_1$ and $\gamma_2$ to matrices with eigenvalues
$\zeta_\ell^{\pm 1}$ and with no non-trivial unipotents in the image.  If we realize $\Gamma$ as a Fuchsian group of the second kind with signature $(1;p,p)$, we achieve such an embedding in $\SL_2(\R)$ for $\zeta_p = e^{2\pi i/p}$, and all other cases can be achieved by composing the resulting homomorphism $\Gamma\hookrightarrow \SL_2(\C)$ with a suitable automorphism of $\C$.

For Condition (5), $V_3(\y)$ is non-singular, so it is just a matter of showing that some homomorphism $\Gamma\to \SL_3(\C)$ in
$V_3(\y)$ with image contained in a suitable $\SL_1(D) \leq \SL_3(\C)$.  As $\Gal(\Q(\zeta_\ell)/\Q) \cong \F_\ell^\times$, there exists an intermediate field $E := \Q(\zeta_\ell)^{\mu_3}$ such that $[\Q(\zeta_\ell):E] = 3$.  
A rational prime $p$ splits completely in $E$ if and only if $p$ reduces (mod $\ell$) to 
an element of $\mu_3$; it splits completely in $\Q(\zeta_\ell)$ if and only if it reduces (mod $\ell$) to $1$.
By Dirichlet's theorem, there exists a prime $p$ which splits in $E$ but not in $\Q(\zeta_\ell)$.  It follows that
$E\subset \Q_p$ but $\zeta_\ell$ is algebraic of degree $3$ over $\Q_p$. 
The Brauer group of $\Q_p$ is canonically isomorphic to $\Z/\Q$ \cite[XIII~Prop.~6]{MR554237},
and we define $D$ to be the (degree $3$) division algebra over $\Q_p$ with invariant $1/3$.
Every degree $3$ extension of $\Q_p$ can be embedded in $D$ \cite[XIII~Prop.~7]{MR554237}.
In particular, there exists an injective $\Q_p$-homomorphism $i\colon \Q_p(\zeta_\ell)\to D$, and it follows that
$$D\otimes_{\Q_p} \Q_p(\zeta_\ell)\cong M_3(\Q_p(\zeta_\ell)).$$
If $\alpha\in \Q_p(\zeta_\ell)$ has minimal polynomial $P(x)$ over $\Q_p$, then
$P(i(\alpha)) = 0$, but viewed as an element of $M_3(\Q_p(\zeta_\ell))$, $i(\alpha)$ has a characteristic polynomial with
coefficients in $\Q_p$, which must then be $P(x)$ as well.  It follows that the eigenvalues of $i(\alpha)\in M_3(\bar\Q_p)$
are $\alpha$ and its conjugates over $\Q_p$.  In particular, if $\zeta_\ell$ is a primitive $\ell$th root of unity, its conjugates
over $E$ (and therefore over $\Q_p$) are $\zeta_\ell^a$ and $\zeta_\ell^{a^2}$, where the image of $a$ in $\F_p$ generates $\mu_3$.

Any element $S\in B_3$ is a coset of the order $3$ subgroup $\mu_3\subset \F_\ell^\times$.
Identifying $\bar\Q_p$ and $\C$, $\psi(S)$ is therefore a Galois-orbit of an element of $\Q_p(\zeta_l)$,
and it follows that there exists an element $e_S\in D\subset M_3(\C)$ with eigenvalues $\psi(S)$.
As $\sum_{s\in S} s = 0$, 
$e_S\in \SL_1(D)$.  The homomorphism  sending $\gamma_1$ and $\gamma_2$ to $e_S$ is 
therefore of the desired kind.
\end{proof}

The same strategy can be used to prove Theorem~\ref{QuadTheorem}:
if $\ell\ge 19$ is a prime that is $\equiv 1$ (mod $3$), then
$$\Gamma := \left< x, y, z, t : x^\ell = y^\ell = z^\ell = t^\ell = xyzt \right>$$
is in $\Lt$.

\begin{proof}[Proof of Theorem~\ref{QuadTheorem}]
Let $x$, $y$, $z$, and $t$ be as in the presentation of $\Gamma$.
By  \cite[Cor.~4.4.5]{MR2109550} and \cite[Th.~4.5]{MR2109550}, there are $4(\ell-1)$ different conjugacy classes of non-trivial elements of
$\Gamma$ of finite order, and they are represented by  
$$x, x^2, \ldots, x^{\ell-1}, y, y^2, \ldots, y^{\ell-1}, \ldots, z, z^2, \ldots, z^{\ell-1}, t, t^2, \ldots, t^{\ell-1}.$$
All of our $\y$ will be of the form 
$$(y_1,y_1^2,\ldots,y_1^{\ell-1},y_1^{\ell-1},y_1^{\ell-2},\ldots,y_1, y_1,y_1^2,\ldots,y_1^{\ell-1},y_1^{\ell-1},y_1^{\ell-2},\ldots,y_1),$$
where $y_1,y_2\in \SL_n(\C)$ are of order $\ell$.  Thus,  $V_n(\y)$ is a subvariety of $\SL_n(\C)^4$ of the form
\begin{equation} \label{SubvarietyEquation}
W:=\{ (X,Y,Z,T) \in C_1 \times C_2 \times C_3 \times C_4 : X Y Z T = 1 \},
\end{equation}
where $C_i$ are specified conjugacy classes of semisimple elements in $\SL_n(\C)$ which satisfy the
condition $C_1 = C_2^{-1} = C_3 = C_4^{-1}$.
Before showing Condition (1) of Theorem \ref{GeneralToolTheorem}, we need to define $Y_n$ precisely.

%
We define $B_k$, $A_n$, $\psi$, and $C_f$ exactly as in the proof of Theorem~\ref{SevenSevenTheorem}.
For each $C_f$, we select any element $\y^f$ for which $y_1=y_2^{-1}$ belongs to $C_f$. We let 
$$Y_n = \{\y^f\mid f\in A_n\}.$$
Thus, the conjugacy classes appearing in $W$ corresponding to $V_n(\y)$ have semisimple elements with multiplicity at most $\lceil n/\ell \rceil.$

We now show Condition (1) of Theorem \ref{GeneralToolTheorem} by showing that varieties of the form (\ref{SubvarietyEquation}) are geometrically irreducible.
By \cite[Th. 7.2.1]{S92} and the fact  that two semisimple elements in $\SL_n(\F_p)$ are conjugate if and only if they are conjugate in $\GL_n(\F_p)$, the number of elements in $W(\F_p)$ is 
\begin{equation} \label{NumberOfSolutions}
\frac{1}{|\GL_n(\F_p)|} |C_1| \cdots |C_4| \sum_{\chi} \frac{\chi(x_1) \cdots \chi(x_4)}{\chi(1)^{2}}
= \frac{1}{|\GL_n(\F_p)|} |C_1|^4 \sum_{\chi} \frac{|\chi(x_1)|^4}{\chi(1)^{2}},
\end{equation}
where $x_i$ is a representative of the conjugacy class $C_i$ in $\GL_n(\F_p)$ and $\chi$ runs through all irreducible characters of $\GL_n(\F_p)$.
As there are exactly $q-1$ characters of $\GL_n(\F_q)$ of degree $1$,
namely those characters which factor through the determinant map $\GL_n(\F_q)\to \F_q^\times$, it follows that
 (\ref{NumberOfSolutions}) is given by
$$
\frac{|C_1|^4}{|\GL_n(\F_p)|}  \left( q-1 +   \sum_{\chi(1) >  1} \frac{|\chi(x_1)|^4}{\chi(1)^{2}} \right).
$$
Note that $C_1$ is the conjugacy class of a semisimple element in a simply connected semisimple group, so by Steinberg's theorem  \cite[Th.~2.11]{MR1343976}, 
it is the quotient of $\SL_n(\F_q)$ by the group of $\F_q$-points of a geometrically connected group over $\F_q$, so 
$$|C_1| = q^{\dim \SL_n - \dim C_{\SL_n}(x_1)}(1+o_q(1)).$$
In the special case that $x_1$ is regular, this is $q^{n^2-n}(1+o_q(1))$

Let $P_q$ denote the set of ordered pairs $(\chi,\chi')$ consisting of an irreducible character $\chi$ of $\SL_n(\F_q)$ and 
an irreducible character $\chi'$ of $\GL_n(\F_q)$ such that $\chi$ is an irreducible constituent of the restriction of $\chi'$ or (equivalently, by Frobenius reciprocity),
$\chi'$ is a constituent of the induced character of $\chi$.  Thus, $P_q$ projects onto the set of irreducible representations of $\SL_n(\F_q)$
and likewise onto the set of irreducible representations of $\GL_n(\F_q)$.  For $(\chi,\chi')\in P_q$, we have
$$\chi(1) \le \chi'(1) \le [\GL_n(\F_q):\SL_n(\F_q)] \chi(1) = (q-1)\chi(1).$$
In particular, the number of characters $\chi'$ associated to a single $\chi$ is at most $q-1$.
The characters of $\GL_n(\F_q)$
associated to the trivial character of $\SL_n(\F_q)$ are precisely the $q-1$ characters of degree $1$.  All other characters of $\GL_n(\F_q)$ have degree 
at least $(q^{n-1}-1)/2$ by the bound of Vicente Landazuri and Gary Seitz
for degrees of non-trivial projective characters of $\mathrm{PSL}_n(\F_q)$ \cite{MR0360852}.
The total number of characters of $\GL_n(\F_q)$ is $O(q^n)$ by a result of Martin Liebeck and L\'aszl\'o Pyber \cite{MR1489911}.  If $x_1$ is regular semisimple, then
$|\chi(x_1)|$ is bounded above by a constant depending only on $n$ \cite{MR2920887}.  Thus,
 (\ref{NumberOfSolutions}) is given by
 $$q^{3n^2-4n+1}(1+o_q(1)).$$

We would like to achieve a similar upper bound when $x_1$ has an eigenvalue with multiplicity greater than one.  Let $\alpha = 1/10$.
If $n$ is divisible by $\ell \leq 19$, each eigenvalue has multiplicity $n/\ell < \alpha n$.  Otherwise, writing
$n = a\ell+k$, $1\le k < \ell$, we have $a\ge 1$, so each eigenvalue has multiplicity 
$$a+1 \le \frac{(a+1)n}{a\ell+1} \le \frac{2n}{\ell+1} \le \alpha n.$$
Let $\beta = 4/9$.  By Theorem \ref{CharacterBoundTheorem}, we have $|\chi(x_1)| \leq \chi(1)^{\beta}$ for all $p$ sufficiently large
and any irreducible character $\chi$ of $\SL_n(\F_p)$.

Following \cite{MR2107038}, we write
$$
\zeta^H(s) = \sum_{\chi} \chi(1)^{-s},
$$
where the sum is taken over irreducible representations of $H$.
Thus, by \cite[Th. 1.1]{MR2107038}, we have
$$\zeta^{\SL_n(\F_p)}(s) = 1 + o_p(1)$$
if $s > 2/n$.  It follows that
$$\frac{\zeta^{\GL_n(\F_p)}(s)}{q-1}  - 1 \le \sum_{\chi(1)\neq 1} \frac{\sum_{\{\chi'\mid (\chi,\chi')\in P_p \}} \chi'(1)^{-s}}{p-1}
\le \zeta_{\SL_n(\F_p)}(s) - 1 = o_p(1).$$
Thus,  
\begin{equation}
\label{W-size}
|W(\F_p)| = \frac{1}{|\SL_n(\F_p)|} |C_1|^4 (1 + o_p(1))  =  p^{3\dim \SL_n - 4\dim C_{\SL_n}(x_1)} (1+o_p(1)).
\end{equation}
By Theorem~\ref{dim-variation}, we have that every geometric component of $W$ has dimension at least
$$
3\dim \SL_n - 4\dim C_{\SL_n}(x_1).
$$
Coupling this with (\ref{W-size}) and with Theorem~\ref{converse-LW}, we deduce that $W/\F_p$ is geometrically irreducible, as desired.

The proofs that Conditions (2), (3), and (4) are satisfied exactly parallel those in the proof of Theorem \ref{SevenSevenTheorem}.
For Condition (5), we need an additional argument to verify that $V_3(\y)$ has a regular point. 
The last result in \cite{MR0169956} gives a sufficient condition for a homomorphism from an oriented Fuchsian group $\Gamma$ to an algebraic group $G$
in characteristic zero to be regular point of $\Hom(\Gamma,G)$; it suffices that the space of coinvariants of the adjoint action of $\Gamma$ on the Lie algebra $\g$ of $G$
is zero.  Equivalently, it suffices that the space of invariants of the coadjoint representation is zero, and if $G$ is semisimple, the adjoint and coadjoint representations are isomorphic, so it suffices that the centralizer of the image of $\Gamma$ in $G$ is zero-dimensional.

Identify $\overline{\Q}_p$ with $\C$ as in the end of the proof of Theorem \ref{SevenSevenTheorem}.
We now finish the proof by showing that we may find a (noninjective) homomorphism $\Gamma \to \SL_3(\C)$ such that the centralizer of the image is zero-dimensional.
Fixing  $\y\in Y_3$ fixes a regular semisimple conjugacy class $C_1$ in $\SL_1(D)$.  We will choose a homomorphism $\gamma_{s,t}: \Gamma\to \SL_1(D)$ in $V(\y)$
defined by 
$$(x,y,z,t)\mapsto (s,s^{-1},t,t^{-1}),$$ 
for $s,t\in C_1$:
Set $s$ to be any element in $C_1$ and let $S$ denote the unique maximal torus in $\SL_3$ containing
$s$.  By Zariski-density of $\SL_1(D)$ in $\SL_3$, there exists  $g\in \SL_1(D)$
that does not lie in the normalizer of $S$ (a proper subvariety of $\SL_3$).   Set $t = g s g^{-1}$.
We claim that the centralizer of $\gamma_{s,t}(\Gamma)$ in $\SL_1(D)$ is contained in the center of $\SL_3(\C)$.
Let $z \in \SL_1(D)$ be an element in this centralizer.
Then because $z$ commutes with $s$, we have $z \in S$.
Further, $z$ commutes with $t$, so $g^{-1} zg$ commutes with $s$ and therefore lies in $S$.
Suppose that $z$ is non-central and in $\SL_1(D)$. Then $z$ is regular (c.f. the observation before Proposition \ref{Div-Alg}) so it belongs to a unique maximal torus, which must be $S$.
Thus, $g$  normalizes $S$, which is impossible by our choice of $g$.  Hence, the only elements in 
$\SL_1(D)$ that commute with every element in $\gamma_{s,t}(\Gamma)$ are in the center of $\SL_3(\C)$.
Since the centralizer of $\gamma_{s,t}(\Gamma)$ in $\SL_1(D)$ is finite, it follows that the centralizer of $\gamma_{s,t}(\Gamma)$ in $\SL_3(\C)$ is zero-dimensional, as desired.
\end{proof}

We finish the section by showing there exists $G$-free groups that are not necessarily residually free and are a semidirect product of free groups.

\begin{thm} \label{GFreeButNotRFTheorem}
The group
$$
\left< a_1, \ldots, a_7, b : b a_1 b^{-1} = a_{2},\; ba_2 b^{-1} = a_{3},\ldots, \;ba_6 b^{-1} = a_{7}, \;ba_7 b^{-1} = a_{1} \right>
$$
is in $\Lf$ but is not residually free.
\end{thm}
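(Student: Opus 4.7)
Set $\Gamma := F_7 \rtimes_\sigma \langle b\rangle$, where $F_7 = \langle a_1,\ldots,a_7\rangle$ and $\sigma$ is the cyclic permutation of the generators. Torsion-freeness is immediate from this presentation: any torsion element projects to a torsion element of $\langle b\rangle \cong \Z$, hence to $0$, and so lies in $F_7$, which is torsion-free. To see $\Gamma$ is not residually free, I would invoke the classical fact that every residually free group is commutative transitive: in $\Gamma$, $b^7$ commutes with every $a_i$ since $\sigma^7 = \mathrm{id}$, and in particular with both $a_1$ and $a_2$; but $a_1$ and $a_2$ do not commute as distinct free generators of $F_7 \leq \Gamma$.

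The substantial task is to show $\Gamma \in \Lf$. By Lemma~\ref{SL-reduction} it suffices to establish $\SL_n$-freeness for all $n\ge 2$. Eliminating $a_2,\ldots,a_7$ via the relations yields $\Gamma \cong \langle a,b : [b^7,a]=1\rangle$, so $\Hom(\Gamma,\SL_n) \cong \{(A,B)\in \SL_n^2 : [B^7,A] = 1\}$. This variety is reducible: the component $\Omega_1 := \{B^7 = I\}$ (with $A$ unconstrained) and the closure $\Omega_2$ of the locus where $B^7$ is regular semisimple and $A \in C_{\SL_n}(B^7)$ are distinct. Hence Theorem~\ref{G-free} does not apply directly; instead I would treat different $\gamma\in\Gamma^\bullet$ using different components.

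Write $\gamma = wb^m$ with $w\in F_7$ and $m\in\Z$. If $w = 1$, so $m\ne 0$, use $\Omega_2$: here $B$ and $A$ both lie in the torus $C_{\SL_n}(B^7)$, so $BAB^{-1} = A$ and each $\rho(a_i) = B^{i-1}AB^{-(i-1)} = A$; the representation factors through $\Gamma^{\mathrm{ab}} \cong \Z^2$, and $e_{\SL_n,\gamma}(A,B) = B^m$ has Zariski-dense image since the $m$th-power map on $\SL_n(\C)$ is surjective. If $w \ne 1$, use $\Omega_1$: for generic $B$ with $B^7 = I$ and generic $A$, Borel's theorem applied to $F_2 \to \SL_n$ via $(a,b)\mapsto (A,B)$ makes $\langle A,B\rangle$ Zariski-dense in $\SL_n$, and the subgroup $N := \langle A, BAB^{-1},\ldots, B^6AB^{-6}\rangle$ is normalized by $A$ trivially and by $B$ (since $B^7AB^{-7} = A$), so $N$ is normal in $\langle A,B\rangle$; as $N$ contains the generic element $A$, its Zariski closure must be all of $\SL_n$. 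A Baire-category argument as in the proof of Lemma~\ref{FreeLemma}, applied to the subfamily of $\Hom(F_7,\SL_n)$ parameterized by $(A,B)\in\Omega_1$, then gives Zariski-density of the image of $e_{\SL_n,w}$; since translation by $B^m$ on $\SL_n$ preserves Zariski-density, density of $e_{\SL_n,\gamma}$ follows. The main obstacle is making this Baire-category argument rigorous in the parameterized setting: one needs to show that for a Zariski-dense set of $(A,B) \in \Omega_1$, the resulting representation of $F_7$ has Zariski-dense image in $\SL_n$, so that intersecting countably many such open conditions (one per word $w$) remains non-empty.
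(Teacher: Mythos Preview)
Your argument that $\Gamma$ is not residually free has a slip: residually free groups are \emph{not} commutative transitive in general (consider $F_2\times F_2$). The idea is easily salvaged, though: for any homomorphism $\phi$ to a free group, either $\phi(b^7)=1$, whence $\phi(b)=1$ and all $\phi(a_i)$ coincide, or $\phi(b^7)\neq1$ commutes with both $\phi(a_1)$ and $\phi(a_2)$, and commutative transitivity \emph{of the target free group} forces $[\phi(a_1),\phi(a_2)]=1$. Either way $[a_1,a_2]$ dies. (The paper argues differently, via residual $2$-finiteness of free groups.)

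The argument for $\Gamma\in\Lf$ has more serious problems. First, your case split is wrong. Take $\gamma=a_1a_2^{-1}b$, so $w=a_1a_2^{-1}\neq1$ and $m=1$; in the two-generator presentation this is $\gamma=aba^{-1}$. On $\Omega_1=\{B^7=I\}$ one has $e_\gamma(A,B)=ABA^{-1}$, whose image lies in $\{X:X^7=I\}$, a proper subvariety; so $e_\gamma|_{\Omega_1}$ is \emph{not} dominant. (``Translation by $B^m$'' does not preserve density when $B$ itself varies.) The correct split is $m=0$ versus $m\neq0$: for $m\neq0$ the quotient $\Gamma\to\Z$ already suffices, as in your $\Omega_2$ argument.

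Second, and this is the real gap, for $\gamma\in F_7^\bullet$ your $\Omega_1$ argument does not establish what is needed. You correctly observe that for generic $(A,B)\in\Omega_1$ the subgroup $N=\langle B^iAB^{-i}\rangle$ is Zariski-dense in $\SL_n$, and you flag ``making the Baire argument rigorous'' as the obstacle. But that is not the obstacle: even granting that a generic representation in $\Omega_1$ has Zariski-dense image, this says nothing about dominance of the map $(A,B)\mapsto\tilde w(A,B)$ on $\Omega_1$. Baire, as used in Lemma~\ref{FreeLemma}, produces a single faithful representation \emph{after} dominance of every word map is already known; it cannot manufacture dominance. What you actually need is that $\Delta:=\langle a,b:b^7=1\rangle\cong\Z*\Z/7$ is almost $G$-free, since $\Omega_1=\Hom(\Delta,\SL_n)$ and elements of $F_7^\bullet$ map to nontorsion elements of $\Delta$. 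The paper obtains this by embedding $\Delta$ in $\Z/7*\Z/7$ and invoking Theorem~\ref{SevenSevenTheorem}, whose proof is one of the main technical contributions of \S\ref{weakGSection}; there does not appear to be an elementary bypass.
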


\begin{proof}
By construction, $\Gamma$ is a semidirect product $F_7$ with $\Z$. It  maps onto the group
$$\Delta=\left< a, b_1, b_2, \ldots,  b_7  : a b_1 a^{-1} = b_{2}, .., a b_6 a^{-1} = b_7, a b_7 a^{-1} = b_1, a^7 = 1 \right>,$$
which is contained inside $\Z/7 \ast \Z/7$ and is almost $G$-free by Theorem \ref{SevenSevenTheorem}.
The entire set $F_7^\bullet$ is almost detected by $\Delta$.
Any  element of $\Gamma\setminus F_7$ is detected by $\Z$.
Since $\Delta$ and $\Z$ are both $G$-free it follows that $\Gamma$ is so by Lemma \ref{DetectableLemma}. 

On the other hand, $\Gamma$ is not residually free:
Since free groups are residually 2-finite, any residually free group must be residually 2-finite. However, the element $b_1 b_2^{-1}$ must vanish in any 2-group quotient of $\Gamma$.
Indeed, the action of $a$ on $F_7$ has order 7, and so if $a^{2^k} = 1$ for any $k$, we must have that the action of $a$ on $F_7$ is trivial.
\end{proof}

\subsection{Groups that fail to satisfy Borel's Theorem}
\label{NotAlmostGFreeGroupsSection}

Here, we give examples of groups which are not almost $G$-free for some semisimple groups $G$.
We start by showing that it is not always true that free products of $G$-free groups are $G$-free.

\begin{prop} \label{ProductofGFreeThm}
The group $\Gamma := (F_2 \times \Z ) * \Z$ is not residually $\{ \SL_2(\C) \}$ but is the free product of two groups that are in $\Lf$.
\end{prop}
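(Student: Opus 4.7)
The plan is to address the two assertions separately. For the first, $\Z$ is free of rank one and hence in $\Lf$ by Lemma \ref{FreeLemma}, while $F_2\times\Z$ is a torsion-free direct product of two finitely generated free groups, so Lemmas \ref{FreeLemma} and \ref{DirectProductLemma} together place it in $\Lf$. The substantive content is therefore the construction of a single nontrivial element of $\Gamma$ killed by every homomorphism to $\SL_2(\C)$.

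Let $a,b$ generate the $F_2$ factor, let $t$ generate the $\Z$ commuting with it inside $F_2\times\Z$, and let $s$ generate the second free factor. I propose the witness
\[
\gamma \;=\; \bigl[[a,b],\; s t^2 s^{-1}\bigr]
\;=\; [a,b]\cdot s\cdot t^2\cdot s^{-1}\cdot [a,b]^{-1}\cdot s\cdot t^{-2}\cdot s^{-1}.
\]
The right-hand side is a product of eight syllables that alternate between the two free factors of $\Gamma=(F_2\times\Z)*\Z$, each nontrivial in its factor, so the normal form theorem for free products gives $\gamma\neq 1$ in $\Gamma$.

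To see that $\phi(\gamma)=I$ for every homomorphism $\phi\colon\Gamma\to\SL_2(\C)$, split on whether $\phi(\langle a,b\rangle)$ is abelian. If it is, then $\phi([a,b])=I$ and $\phi(\gamma)=I$ at once. If it is not, then both $\phi(a)$ and $\phi(b)$ must be non-central in $\SL_2(\C)$. The key structural input is that in $\SL_2(\C)$ the centralizer of any non-central element is abelian---a maximal torus, or $\{\pm I\}\cdot U$ for a root subgroup $U$---and two distinct such centralizers intersect only in $\{\pm I\}$. Since $\phi(t)$ commutes with both $\phi(a)$ and $\phi(b)$, and their centralizers must be distinct (else $\phi(a)$ and $\phi(b)$ would commute), we conclude $\phi(t)\in\{\pm I\}$. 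Therefore $\phi(t^2)=I$, so $\phi(s t^2 s^{-1})=I$ and hence $\phi(\gamma)=I$. The centralizer-intersection observation in $\SL_2(\C)$ is the only non-routine step; once $\gamma$ is written down, everything else is bookkeeping.
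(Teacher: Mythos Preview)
Your proof is correct and follows essentially the same approach as the paper. Both arguments exploit the fact that in $\SL_2(\C)$ the centralizer of any non-central element is abelian (equivalently, that $\SL_2(\C)$ is commutative-transitive away from its center), forcing the image of the central $\Z$-generator to lie in $\{\pm I\}$; the only difference is the choice of witness. The paper uses $[[a_1,a_2],[b,c]]$ (in your notation, $[[a,b],[t,s]]$), which vanishes because $\phi([t,s])=I$ whenever $\phi(t)$ is central, whereas you use $[[a,b],st^2s^{-1}]$, squaring $t$ to kill the sign and conjugating by $s$ to keep the syllables alternating. Both witnesses work for the same reason, and the surrounding verification that the free factors lie in $\Lf$ is identical.
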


\begin{proof}
The groups $\Z$ and $F_2 \times \Z$ are $G$-free for any semisimple $G$ by Borel's Theorem \cite{B83} and Lemma \ref{DirectProductLemma}.

We now show that $\Gamma$, which 
has presentation 
$$\Gamma = \left< a_1, a_2, b, c : [a_1, b] = 1, [a_2, b] = 1 \right>,$$
is not residually $\{\SL_2(\C)\}$.
Supposing, for the sake of contradiction, that the element
$$[[a_1, a_2], [b,c]]$$ 
does not vanish in the image of some homomorphism, $\phi: \Gamma \to \SL_2(\C)$, we must have that $\phi(a_1)$ and $\phi(a_2)$ do not commute.
Since $[\phi(b), \phi(c)] \neq 1$ and $\SL_2(\C)$ is commutative transitive away from its center, we have that $\phi(a_1)$ and $\phi(a_2)$ must commute.
Thus, $\phi([[a_1, a_2], [b,c]]) = 1$.
\end{proof}

For our next result, note that the Lamplighter group $\Z/2\Z \wr \Z$, mentioned in Remark \ref{LamplighterGroupRemark}, is not linear but has an element of order two with infinite conjugacy class.
Thus, dropping the linearity hypothesis from Theorem \ref{NotWeaklySL2Theorem} is not possible.
Further, we note that as a consequence of the following theorem $\SL_n(\Z)$, the infinite dihedral group, and any triangle group are not almost $\SL_2(\C)$-free.

\begin{thm} \label{NotWeaklySL2Theorem}
Let $\Gamma$ be a finitely generated linear group which has infinitely many elements of order two.
Then $\Gamma$ is not almost $\SL_2(\C)$-free.
\end{thm}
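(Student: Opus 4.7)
The plan is to exhibit, for any such $\Gamma$, a single nontorsion element $\gamma\in\Gamma$ together with a proper closed subvariety $V\subset \SL_2$ such that $\phi(\gamma)\in V$ for every homomorphism $\phi\colon\Gamma\to \SL_2(\C)$; this will witness that $\gamma$ fails to be detected by $\SL_2\rel V$, whence $\Gamma$ cannot be almost $\SL_2(\C)$-free.

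The key algebraic observation is that every order-$2$ element of $\SL_2(\C)$ equals $\pm I$. Indeed, such an element is semisimple with eigenvalues in $\{\pm 1\}$ and determinant $1$, which forces both eigenvalues to coincide. Consequently, for any $g\in\Gamma$ of order $2$ and any homomorphism $\phi\colon\Gamma\to\SL_2(\C)$, the image $\phi(g)$ lies in the center $\{\pm I\}$ of $\SL_2(\C)$.

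To produce $\gamma$, I would invoke Selberg's lemma (this is the one place the finitely generated linear hypothesis is used) to obtain a torsion-free subgroup $\Gamma_0\le \Gamma$ of finite index. Since $\Gamma$ contains infinitely many elements of order $2$ but there are only finitely many cosets of $\Gamma_0$, the pigeonhole principle supplies a single coset $x\Gamma_0$ containing two distinct order-$2$ elements $g\neq g'$. Setting $\gamma:=gg'$, we have $\gamma=g^{-1}g'\in\Gamma_0$, and $\gamma\neq 1$ because $g\neq g'=(g')^{-1}$. As $\Gamma_0$ is torsion-free, $\gamma$ is nontorsion.

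Finally, take $V:=\{A\in \SL_2:A^2=I\}$, which in $\SL_2$ equals $\{\pm I\}$ and is therefore a proper closed subvariety. For every $\phi\colon\Gamma\to\SL_2(\C)$ we have $\phi(\gamma)=\phi(g)\phi(g')\in\{\pm I\}\subset V$, completing the argument. The proof is essentially direct once one recognizes the order-$2$ rigidity in $\SL_2(\C)$; the only mildly substantive ingredient is Selberg's lemma, which is what upgrades ``$gg'\neq 1$'' to ``$gg'$ is nontorsion.''
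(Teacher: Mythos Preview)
Your proof is correct and follows essentially the same approach as the paper: both arguments rest on (i) the observation that every involution in $\SL_2(\C)$ is central, so order-$2$ elements of $\Gamma$ map into $\{\pm I\}$, and (ii) Selberg's lemma to produce a torsion-free finite-index subgroup. The paper phrases the endgame as a dichotomy---either $\langle X\rangle$ contains a nontorsion element (and we are done), or it is a torsion group and hence injects into the finite quotient $\Gamma/\Delta$, contradicting $|X|=\infty$---whereas you construct the nontorsion witness $\gamma=gg'$ directly via pigeonhole on cosets; this is a minor reorganization rather than a different idea.
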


\begin{proof}
Let $X$ be the set of all order two elements in $\Gamma$.
Any image of $\Gamma$ in $\SL_2(\C)$ must take $\left< X \right>$ into $Z(\SL_2(\C))$ and so if $\left< X \right>$ contains a nontorsion element the group $\Gamma$ cannot be almost $\SL_2(\C)$-free.
Thus, for the remainder of the proof, we assume that $\left< X \right>$ does not contain any nontorsion element.

Since $\Gamma$ is a finitely generated linear group, by Selberg's lemma, $\Gamma$ contains a finite-index normal subgroup $\Delta \leq \Gamma$ that is torsion-free.
Since $\left< X \right>$ does not contain any nontorsion elements, we have $\left< X \right> \cap \Delta = \{ 1 \}$.
Thus $\left< X \right>$ embeds into the finite group $\Gamma / \Delta$.
This is impossible as $X$ contains infinitely many distinct elements.
\end{proof}

%
Before proving our next result concerning $G$-freeness of torsion-free virtually abelian groups, we need two technical lemmas.

\begin{lemma} \label{vAbelianContainsSolvable}
Let $\Gamma$ be a torsion-free and virtually abelian group that is not abelian. Then $\Gamma$ contains a solvable subgroup that is not abelian.
\end{lemma}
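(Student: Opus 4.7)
The plan is to pass to a normal abelian finite-index subgroup, argue via torsion-freeness that the finite quotient must act nontrivially on it by conjugation, and then form a metabelian (hence solvable) nonabelian subgroup by adjoining a single nontrivially-acting lift to the normal abelian part.

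First I would upgrade the given abelian subgroup of finite index to a \emph{normal} one by intersecting its finitely many $\Gamma$-conjugates; the intersection is still abelian and still of finite index. This produces a short exact sequence $1\to A\to \Gamma\to Q\to 1$ with $A$ abelian, $A\trianglelefteq \Gamma$, and $Q$ finite.

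The main step, and the only place where torsion-freeness is used, is to show that the induced conjugation action of $Q$ on $A$ cannot be trivial. Suppose it were; then $A\subseteq Z(\Gamma)$. For each fixed $h\in\Gamma$, the fact that $[g,h]$ always lies in the central subgroup $A$ makes $g\mapsto [g,h]$ a homomorphism $\Gamma\to A$, so in particular $[g^m,h]=[g,h]^m$ for every $m$. Taking $m=|Q|$ gives $g^m\in A\subseteq Z(\Gamma)$, so $[g,h]^m=[g^m,h]=1$. Torsion-freeness of $\Gamma$ then forces $[g,h]=1$ for all $g,h$, contradicting the hypothesis that $\Gamma$ is nonabelian. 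This is the step I expect to be the main (though mild) obstacle: it requires noticing that nonabelianness alone is not enough and that the torsion-free hypothesis has to do real work here.

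Given that, choose $g\in\Gamma$ whose image in $Q$ acts nontrivially on $A$, and set $H := \langle A,g\rangle \le \Gamma$. Since $A\trianglelefteq \Gamma$ we have $A\trianglelefteq H$, and $H/A$ is cyclic, so $H$ is metabelian and in particular solvable. If $H$ were abelian then $g$ would centralize $A$, contradicting the choice of $g$; hence $H$ is the desired nonabelian solvable subgroup of $\Gamma$, completing the proof.
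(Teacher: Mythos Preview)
Your overall strategy is sound and, once patched, considerably more elementary than the paper's argument. However, there is a genuine gap in your step~2. You assert that ``$[g,h]$ always lies in the central subgroup $A$,'' but this holds only when $Q=\Gamma/A$ is abelian: in general $[g,h]\in A$ if and only if the images of $g$ and $h$ commute in $Q$. Since $Q$ can be nonabelian (torsion-free crystallographic groups exist with arbitrary finite holonomy, including $A_5$), the commutator identity $[g^m,h]=[g,h]^m$ you rely on is unavailable in that case, and the argument as written breaks down.

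The conclusion of your step~2 is nevertheless correct, and the repair is short: if $A\subseteq Z(\Gamma)$ then $[\Gamma:Z(\Gamma)]<\infty$, so by Schur's theorem $[\Gamma,\Gamma]$ is finite; torsion-freeness forces $[\Gamma,\Gamma]=1$, contradicting nonabelianness. With this fix your step~3 goes through unchanged and produces a clean metabelian witness $H=\langle A,g\rangle$.

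For comparison, the paper takes a much heavier route: it disposes of the solvable case trivially, then in the nonsolvable case passes to a maximal normal solvable subgroup $M$, invokes the Feit--Thompson theorem to locate non-commuting involutions in $\Gamma/M$, and lifts them to obtain a subgroup that is an extension of a solvable group by a dihedral quotient. Your corrected argument avoids Feit--Thompson entirely and is both shorter and more transparent; the paper's approach buys nothing extra here.
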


\begin{proof}
If $\Gamma$ is solvable, then there is nothing to prove. Hence, we assume that $\Gamma$ is not solvable.
In any group, the intersection of all possible conjugates of any finite-index subgroup is finite-index. Thus, since $\Gamma$ is virtually abelian, it follows that $\Gamma$ contains a normal subgroup, $N$, of finite-index that is abelian. Let $M$ be a maximal normal solvable subgroup of $\Gamma$ that contains $N$.  $\Gamma/M$ cannot be solvable as $\Gamma$ is not solvable. Thus, $\Gamma/M$ is not solvable. By Feit-Thompson, $\Gamma/M$ must have an element of even order that is not central. Let $t$ be such an element. Then there exists a natural number, $k$, such that $t^k$ has order two. Suppose that all order two elements commute in $\Gamma/M$, then the group generated by all order two elements is a nontrivial ($t^k$ is in it), proper (because $\Gamma/M$ is not abelian), abelian, and normal subgroup of $\Gamma/M$, which is impossible by maximality of $M$. Thus, there exists two order two elements in $\Gamma/M$ that do not commute. We lift these elements to $\Gamma$
and let $D$ be the group they generate. There is a short exact sequence,
$$
1 \mapsto D \cap M \mapsto  D  \mapsto S  \mapsto 1,
$$
where $S$ is solvable, as it is generated by two order two elements and hence is the image of an infinite dihedral group, which is solvable. It follows that $D$ is solvable and nonabelian, so we are done.
\end{proof}

\begin{lemma} \label{InversesLemma}
Let $\Gamma$ be a group with normal abelian torsion-free subgroup $\Delta \leq \Gamma$.
Let $t \in \Gamma$ and $a \in \Delta$ be elements such that $S = \{ t^k a t^{-k} : k \in \Z \}$ is a set of cardinality $p$ with $1<p<\infty$.
Then $\Gamma$ is not $\SL_n(\C)$-free for any $n > p$.
\end{lemma}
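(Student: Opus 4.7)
The plan is to pinpoint one nontrivial element $\gamma\in\Gamma$ together with a proper Zariski-closed subvariety $V\subsetneq\SL_n(\C)$ such that $\phi(\gamma)\in V$ for every homomorphism $\phi\colon\Gamma\to\SL_n(\C)$; this is exactly the failure of $\SL_n(\C)$-freeness. With $a_i:=t^iat^{-i}$, the natural candidate is $\gamma:=aa_1^{-1}\in\Delta$, which is nontrivial because $|S|=p>1$ forces $a\neq a_1$.

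Given any $\phi$, set $B_i:=\phi(t^i\gamma t^{-i})=\phi(a_ia_{i+1}^{-1})$. Three structural observations drive the argument. (i) The $B_i$ pairwise commute, because $\phi(\Delta)$ is abelian. (ii) Since $t^p$ centralizes $a$, indices are periodic mod $p$, and the product telescopes in the abelian group $\phi(\Delta)$ to give $B_0B_1\cdots B_{p-1}=\phi(a_0a_p^{-1})=I$. (iii) Each $B_i=\phi(t)^iB_0\phi(t)^{-i}$ is conjugate to $B:=B_0$, and hence shares its eigenvalue multiset $\{\beta_1,\ldots,\beta_n\}$. Using (i) and (iii), I would simultaneously upper-triangularize the commuting family $\{B_i\}$; in such a basis the diagonal of $B_i$ is a reordering $(\beta_{\sigma_i(1)},\ldots,\beta_{\sigma_i(n)})$ of the common eigenvalues, with $\sigma_0=\mathrm{id}$. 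Reading off the diagonal of the identity in (ii) then gives
$$\prod_{i=0}^{p-1}\beta_{\sigma_i(j)}=1\quad\text{for every }j\in\{1,\ldots,n\},$$
so some size-$p$ multiset of eigenvalues of $B$ (allowing repetition) has product $1$.

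I would then define $V$ to be the vanishing locus on $\SL_n$ of
$$F(B):=\prod_{\mathbf{m}\in\Z_{\ge 0}^n,\ \sum_i m_i=p}\bigl(\beta_1^{m_1}\cdots\beta_n^{m_n}-1\bigr).$$
Because $F$ is symmetric in the $\beta_i$, it is a polynomial in the coefficients of the characteristic polynomial of $B$, and hence a regular function on $\SL_n$, so $V$ is closed; the previous paragraph shows $\phi(\gamma)=B\in V$ for every $\phi$. The main obstacle is verifying $V\neq\SL_n$ when $n>p$: for a generic diagonal $\mathrm{diag}(\beta_1,\ldots,\beta_n)\in\SL_n$ the eigenvalues are algebraically independent subject only to $\prod_i\beta_i=1$, so an identity $\prod_i\beta_i^{m_i}=1$ forces $(m_1,\ldots,m_n)$ to be an integer multiple of $(1,\ldots,1)$. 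Together with $\sum_i m_i=p$ this gives $nc=p$ for some $c\in\Z$, i.e.\ $n\mid p$, contradicting $0<p<n$. Hence some diagonal matrix in $\SL_n$ avoids every factor of $F$, which makes $V$ a proper subvariety and completes the argument.
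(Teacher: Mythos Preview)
Your proof is correct and follows essentially the same route as the paper: both single out $\gamma = a(tat^{-1})^{-1}$, observe that its $\langle t\rangle$-conjugates are mutually commuting, pairwise $\SL_n$-conjugate, and telescope to the identity, and then translate this into an eigenvalue-product constraint that cuts out a proper subvariety when $n>p$. Your write-up is in fact a bit cleaner than the paper's---indexing over $i=0,\ldots,p-1$ rather than over the set $S'$ sidesteps any ambiguity about repeated elements, and simultaneous triangularization (rather than diagonalization) lets you avoid throwing the non-regular-semisimple locus into $V$ by hand.
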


\begin{proof}
We use additive notation for $\Delta$ and write $a^t$ for $t a t^{-1}$ when $a\in \Delta$ and $t\in \Gamma$.
Set $S' = \{ a^{t^k} - a^{t^{k+1}}\mid k\in \Z\}$.
Thus $S'$ forms a single orbit under $\langle t\rangle$, and $a - a^t\neq 0$, so all elements of the orbit are of infinite order.
We have
\begin{equation} 
\sum_{b \in S'} b = \sum_{c\in S} (c- c^t) = \sum_{c\in S}c - \sum_{c\in S} c = 0.\label{TelescopingEquation}
\end{equation}

With this in hand, suppose, for the sake of a contradiction, that $\Gamma$ is $\SL_n(\C)$-free for some $n > p$.
Let $V$ be the subvariety of $\SL_n(\C)$ consisting of elements that are not regular semisimple along with elements $A \in \SL_n(\C)$ with the following property: 
there exists $x_1, \ldots, x_p$, a collection of complex numbers, each of which is an eigenvalue of $A$, with $x_1 x_2 \cdots x_p = 1$.
Since $V$ is a proper subvariety (guaranteed as $p < n$), we have that there exists some $\phi : \Gamma \to \SL_n(\C)$ such that $\phi(m) \notin V$.
However, $\prod_{A \in \phi(S')} A = 1$ by  (\ref{TelescopingEquation}).
Since all the elements in $\phi(S')$ commute with one another, they are simultaneously diagonalizable, so $A \in V$, which is impossible.
\end{proof}

\begin{thm} \label{VirtuallyAbelianNotSLnFree}
Let $\Gamma$ be a torsion-free and virtually abelian group that is not abelian.
Then $\Gamma$ is not $\SL_n(\C)$-free some $n$.
\end{thm}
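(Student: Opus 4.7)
The plan is to reduce the conclusion to Lemma~\ref{InversesLemma} by producing, inside $\Gamma$, a nontrivial but finite conjugation orbit on an abelian normal subgroup. First I would fix a normal abelian subgroup $N\trianglelefteq\Gamma$ of finite index; this exists because any finite-index abelian subgroup has only finitely many conjugates and their intersection is still finite index, normal, and abelian. Since $\Gamma$ is torsion-free, so is $N$. The goal then becomes: find $t\in\Gamma$ and $a\in N$ with $tat^{-1}\neq a$; once such a pair is found, set $S=\{t^k a t^{-k}:k\in\Z\}$, apply Lemma~\ref{InversesLemma} with $\Delta=N$ and $p=|S|$, and conclude that $\Gamma$ is not $\SL_n(\C)$-free for any $n>p$.

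The main obstacle, and essentially the only nontrivial step, is ruling out the possibility that $N$ is central in $\Gamma$ (in which case no such $t,a$ exist). I would handle this by contradiction: if $N\subseteq Z(\Gamma)$, then $\Gamma/Z(\Gamma)$ is finite, so by Schur's theorem the commutator subgroup $[\Gamma,\Gamma]$ is finite. But $\Gamma$ is torsion-free, forcing $[\Gamma,\Gamma]=1$, i.e.\ $\Gamma$ is abelian, contrary to hypothesis. Hence $N$ is not central, and we may choose $t\in\Gamma$ and $a\in N$ with $tat^{-1}\neq a$.

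Finally, I need to verify that the resulting orbit $S$ is finite and has size at least $2$, so that the hypotheses of Lemma~\ref{InversesLemma} are met. The second condition is immediate from the choice of $t$ and $a$. For the first, pick $m>0$ with $t^m\in N$ (possible because $|\Gamma/N|<\infty$); then $t^m$ commutes with $a$ since $N$ is abelian, so $t^m a t^{-m}=a$, which bounds $|S|\le m$. Lemma~\ref{InversesLemma} then yields that $\Gamma$ is not $\SL_n(\C)$-free for every $n>|S|$, as required.
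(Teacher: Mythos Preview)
Your proof is correct and in fact cleaner than the paper's. The paper first reduces to the solvable case via Lemma~\ref{vAbelianContainsSolvable} (whose proof invokes Feit--Thompson) and Lemma~\ref{ContainmentLemma}, then takes a maximal normal abelian subgroup $A'$ and splits into cases according to whether $A'$ is central. When $A'$ is noncentral, the paper applies Lemma~\ref{InversesLemma} just as you do; when $A'$ is central, the paper instead finds noncommuting lifts $d_1,d_2$ of elements of an abelian normal subgroup of $\Gamma/A'$ and uses commutative transitivity of $\SL_2(\C)$ away from its center to show $\Gamma$ is not $\SL_2(\C)$-free.

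Your route bypasses all of this: by observing that if the finite-index normal abelian subgroup $N$ were central then $\Gamma/Z(\Gamma)$ would be finite, so Schur's theorem forces $[\Gamma,\Gamma]$ finite, hence trivial by torsion-freeness, contradicting nonabelianness. This kills the central case outright, so every instance feeds into a single application of Lemma~\ref{InversesLemma}. You thereby avoid the solvable reduction, the Feit--Thompson appeal, and the separate $\SL_2$ argument. The only thing the paper's approach buys in the central case is the explicit value $n=2$, whereas your $n$ depends on the orbit size $|S|$; since the theorem only asks for \emph{some} $n$, this costs nothing.
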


\begin{proof}
By Lemma \ref{vAbelianContainsSolvable}  and Lemma \ref{ContainmentLemma} we reduce to the case of $\Gamma$ a solvable group.
Thus, we have a short exact sequence
$$
1 \to A \to \Gamma \to H \to 1
$$
where $A$ is abelian and $H$ is a finite nontrivial solvable group.
Let $A'$ be a maximal normal abelian subgroup containing $A$. And let $H'$ be the quotient $\Gamma/A'$.
Since $\Gamma$ is not abelian, we have that $H'$ is nontrivial.
Let $D$ be a nontrivial abelian normal subgroup of $H'$ (this exists because $H'$ is solvable).
If $D$ acts nontrivially on $A'$, then there exists some $t \in D$ and $a \in A'$ such that $tat^{-1} \neq a$.
Since $A'$ is of finite-index in $H$, we are in the situation of Lemma \ref{InversesLemma}.
In fact, if $A'$ is noncentral, then we are in the same situation.
We assume, then, that $A'$ is in the center of $\Gamma$.
Then $D$ acts trivially on $A'$, so by maximality of $A'$, we have that there exists $d_1, d_2 \in D$ such that $[d_1, d_2] \in A' \setminus \{1\}$.
Suppose, for the sake of a contradiction, that $\Gamma$ is $\SL_2(\C)$-free.
Then we may find some image of $\Gamma$ that detects $[d_1, d_2]$ in $\SL_2(\C)$ rel $Z(\SL_2(\C))$.
Since $\SL_2(\C)$ is commutative transitive away from its center, this is impossible.
Thus, $\Gamma$ is not $\SL_2(\C)$-free.
\end{proof}

\begin{cor} \label{KleinBottleGroupCorollary}
Let $S$ be the Klein bottle.
Then $\pi_1(S, \cdot)$ is not in $\Lf$.
\end{cor}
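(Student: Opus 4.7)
The plan is to observe that the fundamental group of the Klein bottle is torsion-free, virtually abelian, and not abelian, and then appeal directly to Theorem~\ref{VirtuallyAbelianNotSLnFree}. Concretely, $\pi_1(K)$ has the standard presentation $\langle a,b \mid aba^{-1}b\rangle$, equivalent to $aba^{-1} = b^{-1}$. The subgroup $\langle a^2, b\rangle$ is normal of index two, and the relation gives $a^2 b a^{-2} = a(b^{-1})a^{-1} = b$, so this subgroup is abelian, in fact free abelian of rank two. Hence $\pi_1(K)$ is virtually $\Z^2$, and in particular torsion-free and virtually abelian.

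Next I would note that $\pi_1(K)$ is not itself abelian, since the defining relation would force $b = b^{-1}$ and hence $b$ to be of order two, contradicting torsion-freeness. With torsion-freeness, virtual abelianness, and nonabelianness in hand, Theorem~\ref{VirtuallyAbelianNotSLnFree} applies directly to give an $n$ for which $\pi_1(K)$ is not $\SL_n(\C)$-free. Since membership in $\Lf$ requires $G$-freeness for every connected semisimple $G$, including $\SL_n(\C)$, this rules out $\pi_1(K) \in \Lf$.

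Alternatively, one could invoke Theorem~\ref{ContainsFreeSubgroupTheorem}: any finitely generated group in $\Lf$ must either be free abelian or contain a nonabelian free subgroup. The Klein bottle group is finitely generated, not abelian (so not free abelian), and virtually abelian (so by a standard ping-pong/growth argument it contains no nonabelian free subgroup). Either route closes out the corollary in a line or two.

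There is no substantive obstacle: the main content has been absorbed into Theorem~\ref{VirtuallyAbelianNotSLnFree} (or Theorem~\ref{ContainsFreeSubgroupTheorem}), and the only verification required is the elementary structural description of $\pi_1(K)$ as a torsion-free, virtually free abelian, nonabelian group.
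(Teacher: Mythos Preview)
Your approach is essentially the same as the paper's: verify that the Klein bottle group is torsion-free, virtually abelian, and nonabelian, then invoke Theorem~\ref{VirtuallyAbelianNotSLnFree}. One minor slip: being virtually $\Z^2$ does not by itself imply torsion-freeness (consider $\Z^2\times\Z/2\Z$), so you should justify that separately, e.g., via asphericity of the Klein bottle or a direct normal-form check.
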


\begin{proof}
We have $\pi_1 = \left< x, y : x^2 y^2 \right>$.
This group is torsion-free and virtually abelian but nonabelian, so it cannot be in $\Lf$ by Theorem \ref{VirtuallyAbelianNotSLnFree}.
\end{proof}

\begin{thm}
\label{dim-criterion}
If $\Gamma\in \Lf$ is finitely generated, non-trivial, and not isomorphic to $\Z$, then
\begin{equation}
\label{dim-ineq}
\dim \Hom(\Gamma,G) \ge \dim G+\rk G
\end{equation}
for all simply connected semisimple $G$.
\end{thm}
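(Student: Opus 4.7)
The plan is to apply Theorem~\ref{ContainsFreeSubgroupTheorem}: since $\Gamma\in\Lf$ is finitely generated, non-trivial, and not $\Z$, it is either free abelian of rank $n\ge 2$ or contains a non-abelian free subgroup. Recall also that $\Gamma\in\Lf$ together with torsion-freeness implies $\Gamma$ is $G$-free for every connected semisimple $G$. The two cases of the alternative are handled by different arguments.

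In the free abelian case $\Gamma\cong\Z^n$ with $n\ge 2$, fix a maximal torus $T\subset G$. The conjugation map $G\times T^n\to\Hom(\Z^n,G)$ sending $(g,t_1,\ldots,t_n)$ to $(gt_1g^{-1},\ldots,gt_ng^{-1})$ has dense image (any commuting $n$-tuple of semisimple elements is simultaneously conjugate into $T$) and generic fibers of dimension $\rk G$ (coming from the trivial action of $T$ on the tuple $(t_1,\ldots,t_n)$, modulo the finite action of $N_G(T)/T$). Hence
\[\dim\Hom(\Z^n,G)\ge \dim G + n\rk G - \rk G = \dim G + (n-1)\rk G \ge \dim G + \rk G.\]

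In the non-abelian case, pick any $\gamma\in\Gamma^\bullet$. By $G$-freeness the evaluation map $e_\gamma\colon \Hom(\Gamma,G)\to G$ is dominant, and composing with the adjoint quotient $\chi\colon G\to T/W\cong \mathbb{A}^{\rk G}$ yields another dominant map; so $\dim\Hom(\Gamma,G)\ge \rk G + d$, where $d$ is the generic fiber dimension of $\chi\circ e_\gamma$. A fiber of this composition is $G$-conjugation invariant, and if it contains a $\phi$ with $\phi(\Gamma)$ Zariski-dense in $G$, then $Z_G(\phi(\Gamma))\subset Z(G)$ is finite (as $G$ is simply connected semisimple), so the $G$-orbit of $\phi$ has dimension $\dim G$ and lies in the fiber, yielding $d\ge \dim G$ and the desired bound.

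The main obstacle is exhibiting a Zariski-dense $\phi\in\Hom(\Gamma,G)$ whose image under $\chi\circ e_\gamma$ is a generic value of that map. A Zariski-dense representation will be constructed by a Baire-category argument in the classical topology, paralleling Lemma~\ref{FreeLemma}: the maximal positive-dimensional proper Zariski-closed subgroups of $G$ fall into finitely many $G$-conjugacy classes, and $G$-freeness prevents $\Hom(\Gamma,G)$ from being driven into any single such class, making the corresponding "bad" loci classically nowhere dense. Intersecting these finitely many dense open complements with the countably many open conditions $\phi(\gamma')\ne 1$ for $\gamma'\in\Gamma^\bullet$ (possible since $\Gamma$ is infinite, containing $F_2$) yields a faithful $\phi$ with Zariski-dense image. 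Finally, since the Zariski-dense locus is classically open-dense and $e_\gamma$ is dominant, $\chi(\phi(\gamma))$ ranges over a Zariski-dense subset of $T/W$ as $\phi$ varies within this locus, and can therefore be arranged to land in the open set of generic values of $\chi\circ e_\gamma$, completing the proof.
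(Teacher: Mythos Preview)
Your abelian case is fine and essentially matches the paper. The non-abelian case, however, has a real gap.

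The step ``$G$-freeness prevents $\Hom(\Gamma,G)$ from being driven into any single such class'' does not follow. For a maximal proper closed subgroup $H\subset G$, the bad locus $B_H=\{\phi:\phi(\Gamma)\subset gHg^{-1}\text{ for some }g\}$ could in principle be all of a component $\Omega$ without contradicting $G$-freeness: if $H$ is a Borel, or any maximal-rank reductive subgroup, the union $\bigcup_g gHg^{-1}$ is Zariski-dense in $G$, so $e_\gamma(\Omega)\subset\bigcup_g gHg^{-1}$ is no obstruction to $e_\gamma$ being dominant. One can try to rescue this for Borels by observing that a fixed derived-length bound on $\phi(\Gamma)$ kills some high commutator $\gamma$, but for reductive $H$ (which contain free subgroups) no such identity exists, and you have given no argument in that case. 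There is also a component issue you have not addressed: $\Hom(\Gamma,G)$ need not be irreducible, and you need a Zariski-dense $\phi$ \emph{and} dominance of $e_\gamma$ on the \emph{same} component~$\Omega$; nothing in your outline links these.

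The paper avoids this entirely. Rather than seeking a Zariski-dense representation, it only needs $\Stab_G(\phi)$ to be finite, and it achieves this with a much weaker hypothesis on $\phi$: it produces elements $\gamma_1,\gamma_2,\gamma_3\in\Gamma$ with $\gamma_3$ in the derived group of $\langle\gamma_1,\gamma_2\rangle$, all lying in the same $S_\Omega$ (this last point requires a short pigeonhole/induction argument on coverings of a free subgroup), and then chooses $\phi\in\Omega$ so that each $\phi(\gamma_i)$ is \emph{torus-generic} (regular semisimple and topologically generating its centralizing torus). Any $g$ of infinite order centralizing $\phi(\gamma_1)$ and $\phi(\gamma_2)$ would force $\phi(\gamma_3)$ into the derived group of $C_G(g)$, a semisimple group of rank strictly less than $\rk G$, contradicting torus-genericity of $\phi(\gamma_3)$. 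Hence $\Stab_G(\phi)$ is finite. The dimension count then proceeds by a tangent-space argument at a regular point $\phi$ where $e_{\gamma_1\,*}$ is surjective: the kernel of $e_{\gamma_1\,*}$ restricted to $T_\phi(O_G(\phi))\cong\g$ already has dimension $\ge\rk G$, so $\dim T_\phi\Omega\ge\dim G+\rk G$. This bypasses both the Zariski-density question and the need to control fibers of $\chi\circ e_\gamma$.
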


\begin{proof}
If $\Gamma$ is abelian, it must be free abelian of rank $r\ge 2$.  If $\gamma_1,\gamma_2$ are two generators, then $e_{G,\gamma_1}$ is surjective, and the fiber
over $g\in G(\C)$ maps onto $C_G(g)$ under $e_{G,\gamma_2}$.  Since the dimension of every centralizer is at least $\rk G$ \cite[\S1.6]{MR1343976},
this implies (\ref{dim-ineq}).  We can therefore, by Lemma~\ref{SolvableGroupLemma} and
Theorem \ref{VirtuallyAbelianNotSLnFree}, assume that $\Gamma$ is not virtually solvable,
and every finitely generated subgroup of $\Gamma$ which is virtually solvable is abelian.  

We say $g\in G(\C)$ is  \emph{torus-generic} if $g$ is regular semisimple and $g$ generates a Zariski-dense subgroup of $T := C_G(g)$ (which is a maximal torus by Steinberg's theorem \cite[Th.~2.11]{MR1343976}).
We claim that the torus-generic elements lie in the union of countably many proper closed subvarieties of $G$.  
Indeed, there are  countably many closed subgroups $S$ of a maximal torus $T$
(\cite[Cor.~8.3]{MR1102012}), and for each such $S$,
the set of conjugates of elements of $S$ is contained in the proper closed subvariety of $G$ which is the Zariski-closure of the conjugation map $\xi\colon G\times S\to G$.
All fibers of this morphism have dimension $\ge \dim T$ because if $(h,s)\in \xi^{-1}(g)$, then $(hT,s)\subset \xi^{-1}(g)$.  By Theorem~\ref{dim-variation}, 
$$\overline{\xi(G\times S)}\le \dim G + \dim S - \dim T < \dim G.$$

Suppose that there exists a homomorphism $\phi\colon \Gamma\to G(\C)$ and elements $\gamma_1,\gamma_2,\gamma_3\in \Gamma$ such that
\begin{enumerate}
\item $\phi$ is a regular point of an irreducible component $\Omega$ of $\Hom(\Gamma,G)$
\item The restriction of $e_{G,\gamma_1}$ to $\Omega$  induces a surjection of tangent maps $T_\phi\Omega \to T_{\phi(\gamma_1)} G$.
\item $\phi(\gamma_i)$ is torus-generic for all $\gamma_i$ for $i\in \{1,2,3\}$ 
\item  $\gamma_3$ lies in the derived group of $\langle \gamma_1,\gamma_2\rangle$.
\end{enumerate}
\noindent
We claim that these conditions imply (\ref{dim-ineq}).  Any element of $G(\C)$ which stabilizes $\phi$ must commute with $\phi(\gamma_1)$ and $\phi(\gamma_2)$.
Thus, if $\Stab_G(\phi)$ has positive dimension, there exists an element $g\in G(\C)$ of infinite order such that $g$ commutes with both of these
elements.  As $\phi(\gamma_1)$ is regular semisimple, its centralizer consists only of semisimple elements, so $g$ is semisimple.  As $\langle g\rangle$ lies in the
center of the connected reductive group $H := C_G(g)$, it follows that the center of $H$ is positive-dimensional and therefore that the derived group $D$
of $H$ has rank strictly smaller than $\rk H$ and therefore strictly smaller than $\rk G$.  However, $\gamma_3\in D(\C)$ is torus-generic, so this is impossible.
Thus, we have the diagram
$$\xymatrix{O_G(\phi)\ar@{^{(}->}[r]\ar[d]_{e_{G,\gamma_1}}&\Omega \ar[d]^{e_{G,\gamma_1}} \\
O_G(\phi(\gamma_1))\ar@{^{(}->}[r]& G,}$$
where $O_G$ denotes $G$ orbit.  As conjugacy classes are non-singular varieties, the tangent space to $O_G(\phi(\gamma_1))$ at $\phi(\gamma_1)$
has dimension $\dim G - \rk G$.  As
$e_{G,\gamma_1\,\ast}$ maps $T_\phi O_G(\phi)\cong \g$ to $T_{\phi(\gamma_1)}O_G(\phi(\gamma_1))$, a space of dimension $\dim G - \rk G$, its kernel must have dimension at least $\rk G$, and as 
$e_{G,\gamma_1\,\ast}$ is a surjective map $T_\phi\Omega \to T_{\phi(\gamma_1)} G$, it follows that 
$$\dim  T_\phi \Omega\ge \dim G + \rk G.$$
By definition of regularity, Condition (1) now implies the theorem.

To construct $\gamma_i$ as above, we use induction on $n$ to prove the following claim:  if $\Delta$ denotes the free group on two generators, $x$ and $y$,
and $\Delta_1,\ldots,\Delta_n\subset \Delta$  have union $\Delta^\bullet$, then for some $i$ there exist
elements, $\gamma_1,\gamma_2\in \Delta_i$, and some element $\gamma_3$ in the intersection of $\Delta_i$ and 
the derived group of $\langle \gamma_1,\gamma_2\rangle$.
The case $n=1$ is trivial.  If the statement is true for $n$ and $\Delta = \Delta_1\cup\cdots\cup \Delta_{n+1}$, then we observe that as 
no two of the elements $x,xy,xy^2,\cdots$ commute with one another,  at least one of the sets $\Delta_i$, without loss of generality $\Delta_{n+1}$, contains elements
$\alpha,\beta$ which fail to commute.  If  $\Delta_{n+1}$ contains some element of the derived group of $\langle \alpha,\beta\rangle$, then we are done.
If not, we replace $\Delta$ by any subgroup $\Delta'$ of $[\langle \alpha,\beta\rangle,\langle \alpha,\beta\rangle]$ generated by two non-commuting elements
and replace $\Delta_1,\ldots,\Delta_n$ by $\Delta'_i := \Delta_i \cap \Delta'$.  The claim now follows by induction.

By Theorem~\ref{ContainsFreeSubgroupTheorem}, $\Gamma$ contains a subgroup $\Delta$ isomorphic to the free group on two generators.
We define the $\Delta_i$ to be the intersections of $\Delta$ with $S_\Omega$, as defined in Proposition~\ref{LinearEmbeddingLemma}, as $\Omega$
ranges over the irreducible components of $\Hom(\Gamma,G)$.  We fix $\Omega$ such that there exist $\gamma_1,\gamma_2,\gamma_3\in S_\Omega\cap \Delta$,
with $\gamma_3$ in the derived group of $\langle \gamma_1,\gamma_2\rangle$.  Conditions 1 and 2 on $\phi$ 
are non-empty and open and Condition (3) is satisfied on the complement of a countable union of proper closed subvarieties of $\Omega$.  
By Theorem~\ref{Baire}, all three can be satisfied simultaneously, and the theorem follows.

\end{proof}

\begin{cor}
If $w\in F_2$ is a word such that for some simply connected semisimple group $G$, the word map $G^2\to G$ is flat over some neighborhood of the identity in $G$,
then the one-relator group $\Gamma$ determined by $w$ is not in $\Lf$.
\end{cor}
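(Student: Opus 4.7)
The plan is a direct application of Theorem~\ref{dim-criterion}. First, I identify $\Hom(\Gamma,G)$ with the fiber $w^{-1}(1_G)\subset G^2$ of the word map $w\colon G^2\to G$. The flatness hypothesis furnishes an open neighborhood $U$ of $1_G$ over which $w$ is flat; in particular, $w$ is flat at every point of the fiber $w^{-1}(1_G)$.

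Second, I invoke the standard fact that a flat morphism between smooth irreducible varieties has local fiber dimension equal to the relative dimension $\dim G^2 - \dim G = \dim G$. Hence $\dim \Hom(\Gamma,G) = \dim G$. Because $G$ is a simply connected semisimple group, $\rk G \ge 1$, so
$$\dim \Hom(\Gamma,G) = \dim G < \dim G + \rk G,$$
in direct violation of inequality~(\ref{dim-ineq}) of Theorem~\ref{dim-criterion}.

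The contrapositive of that theorem then forces $\Gamma$ to be trivial, isomorphic to $\Z$, or not in $\Lf$. The first two alternatives occur only in degenerate cases (where $w$ is trivial or a conjugate of a generator of $F_2$), which the statement implicitly excludes; in all remaining cases we conclude $\Gamma\notin\Lf$. There is essentially no obstacle here: the flatness hypothesis was tailored to deliver exactly the dimension bound needed to invoke Theorem~\ref{dim-criterion}, and only the minor bookkeeping of the cyclic and trivial cases requires explicit mention.
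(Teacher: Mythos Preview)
Your proof is correct and follows the same approach as the paper's: identify $\Hom(\Gamma,G)$ with the fiber $e_{G,w}^{-1}(1)$, use flatness (via Theorem~\ref{dim-variation}) to conclude this fiber has dimension $\dim G < \dim G + \rk G$, and invoke Theorem~\ref{dim-criterion}. Your treatment is in fact slightly more careful than the paper's, which leaves the exclusion of the trivial and cyclic cases implicit.
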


\begin{proof}
Applying Theorem~\ref{dim-variation} to  $e_{G,w}\colon G^2\to G$, we deduce that the dimension of $w^{-1}(1) = \Hom(\Gamma,G)$ is $\dim G < \dim G +\rk G$.
\end{proof}

We also have a version of Theorem~\ref{dim-criterion} for $\Lt$:

\begin{thm}
\label{no-vss}
Suppose $\Gamma\in \Lt$ is finitely generated and linear over $\C$.  If every virtually solvable subgroup of $\Gamma$ is cyclic, but $\Gamma$ itself is not, then
$$\dim \Hom(\Gamma,G) \ge \dim G+\rk G$$
for all simply connected semisimple $G$.
\end{thm}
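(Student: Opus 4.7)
The plan is to closely mimic the proof of Theorem~\ref{dim-criterion}, the main modification being that almost $G$-freeness suffices in place of $G$-freeness. First, the hypothesis that every virtually solvable subgroup of $\Gamma$ is cyclic, combined with the assumption that $\Gamma$ itself is not cyclic, implies that $\Gamma$ is not virtually solvable. Since $\Gamma$ is linear over $\C$, the Tits alternative furnishes a subgroup $\Delta \leq \Gamma$ isomorphic to the free group on two generators. Decompose $\Hom(\Gamma,G)$ into its finitely many irreducible components $\Omega$, and for each $\Omega$ let $S_\Omega$ denote the set of $\gamma\in\Gamma$ for which the restriction of $e_{G,\gamma}$ to $\Omega$ has Zariski-dense image, exactly as in the proof of Proposition~\ref{LinearEmbeddingLemma}. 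Since $\Gamma\in\Lt$ is almost $G$-free and every nontrivial element of $\Delta$ is nontorsion, the finitely many sets $S_\Omega\cap\Delta$ cover $\Delta\setminus\{1\}$.

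Next, apply the combinatorial construction used in Theorem~\ref{dim-criterion}: for any finite cover of $\Delta\setminus\{1\}$ one can find elements $\gamma_1,\gamma_2,\gamma_3$ lying in a common piece with $\gamma_3$ in the derived group of $\langle\gamma_1,\gamma_2\rangle$. Fix an $\Omega$ containing such a triple in $S_\Omega$. By the Baire category theorem, select $\phi\in\Omega$ which is a regular point of $\Omega$, induces a surjective tangent map $T_\phi\Omega\to T_{\phi(\gamma_1)}G$ via $e_{G,\gamma_1}$, and sends each $\gamma_i$ to a torus-generic element of $G$; the first two conditions hold on a nonempty Zariski-open subset of $\Omega$ (thanks to $\gamma_1\in S_\Omega$), while torus-genericity fails only on a countable union of proper closed subvarieties, so Baire category allows all three to be met simultaneously.

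The stabilizer argument from the original proof now goes through unchanged. If $\Stab_G(\phi)$ were positive-dimensional, some semisimple $g\in G(\C)$ of infinite order would commute with both $\phi(\gamma_1)$ and $\phi(\gamma_2)$, forcing $\phi(\gamma_3)$ into the derived group $D$ of the reductive group $C_G(g)$, where $\rk D<\rk G$; this contradicts torus-genericity of $\phi(\gamma_3)$. Hence $\Stab_G(\phi)$ is zero-dimensional, the orbit $O_G(\phi)$ has dimension $\dim G$, and comparing dimensions via the surjective tangent map at $\phi$ yields $\dim\Omega\geq\dim G+\rk G$. The only substantive issue is verifying that almost $G$-freeness suffices in place of $G$-freeness; this works precisely because all the $\gamma_i$ lie in the torsion-free subgroup $\Delta$, so only the nontorsion case of the density property is needed. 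The genuine conceptual content of the theorem is thus already packaged in the stabilizer/tangent-space argument of Theorem~\ref{dim-criterion}, and the role of the cyclicity hypothesis is just to force the existence of a nonabelian free subgroup without passing through $\Lf$.
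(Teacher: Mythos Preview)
Your proposal is correct and follows essentially the same approach as the paper's own proof. The paper's argument is extremely terse: it uses the Tits alternative to produce a free subgroup, observes that the $S_\Omega$ cover all nontorsion elements, finds by pigeonhole two noncommuting elements $xy^i,xy^j$ in a common $S_\Omega$, and then simply says ``the proof now finishes in exactly the same way as the proof of Theorem~\ref{dim-criterion}.'' Your write-up spells this out more fully---invoking the combinatorial covering claim of Theorem~\ref{dim-criterion} directly and then repeating the Baire-category and stabilizer arguments---but the underlying mechanism is identical.
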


\begin{proof}
By the Tits alternative, every non-cyclic subgroup of $\Gamma$ contains a free subgroup on two generators $x$ and $y$.  For any semisimple $G$,
the union of the sets $S_\Omega$ associated with the irreducible components $\Omega$ of $\Hom(\Gamma,G)$ consists of all nontorsion elements of 
$\Gamma$.  Therefore, we can find two elements of the form $xy^i, xy^j$ which belong to the same $S_\Omega$.  The proof now finishes in exactly the same
way as the proof of Theorem~\ref{dim-criterion}.
\end{proof}

With a little work, we can now deduce Theorem~\ref{TriangleGroupTheorem}.

\begin{proof}[Proof of Theorem~\ref{TriangleGroupTheorem}]
Every von Dyck group is of the  form
$$\Gamma =  \langle x_1,x_2,x_3: x_1^a,x_2^b,x_3^c,x_1 x_2 x_3\rangle,$$
where $a,b,c$ are positive integers with $1/a+1/b+1/c < 1$.  As $\Gamma$ embeds in $\PSL_2(\R)$, it is linear, so it suffices to show that 
all of its virtually solvable subgroups are cyclic and that
$\dim \Hom(\Gamma, \SL_2) \leq 3.$

Let $\Delta\subset \Gamma$ be a virtually solvable subgroup, and let $H\subset \PGL_2$
denote its Zariski-closure.  As $H$ is virtually solvable, it is 
a proper closed subgroup of $\PGL_2$ and therefore either zero-dimensional, 
contained in a Borel subgroup, or contained in the normalizer of a maximal torus.
Since every finite subgroup of a von Dyck group is cyclic, we need only consider the two remaining cases.
Since $\Gamma$ is a discrete and cocompact subgroup of $\PSL_2(\R)$, it has no non-trivial unipotent elements \cite[Theorem 4.2.1]{MR1177168}, so if $H$
is contained in a Borel, it is contained in a maximal torus.  Thus, we can assume 
a subgroup $\Delta^\circ$ of $\Delta$ of index $\le 2$ is contained in a maximal torus.

The fact that $\Gamma$ contains a hyperbolic surface group as a subgroup of finite index implies that $\Delta^\circ$ contains a finite index
subgroup which is abelian and embeds in a hyperbolic surface group.  This subgroup must be isomorphic to $\Z$.  Thus $\Delta^\circ$ is an abelian group containing $\Z$ as a subgroup of
finite index.  Since the elements of finite order in $\Gamma$ all have finite centralizers, $\Gamma^\circ$ is torsion-free and therefore isomorphic to $\Z$.
The only groups which contain $\Z$ as a subgroup of index $\le 2$ are $\Z$ itself, $\Z\times \Z/2$, the infinite dihedral group, and the Klein bottle group.
Theorem~\ref{NotWeaklySL2Theorem} rules out the second and third cases.  The Klein bottle group cannot embed 
in a discrete group of orientation-preserving isometries of the hyperbolic plane, since the quotient would be an orientable surface whose $\pi_1$ is the Klein bottle
group, which is impossible.  Thus, $\Delta$ is, indeed, cyclic.

For the dimension computation, we note that for all homomorphisms $\phi$ with $\phi(x_1) = I$ for some $i$, $\phi$ is determined 
by $\phi(x_2)$, so the subvariety of the representation variety satisfying this condition has dimension $\le 3$.  Likewise for $x_2$ and $x_3$, and likewise if some $\phi(x_i) = -I$.  Therefore, we can assume that $\phi(x_i)$ has order $\le 3$ for all $i$, and therefore the eigenvalues $\lambda_i^{\pm 1}$ are distinct from one another.
There are finitely many possibilities for $\lambda_1,\lambda_2,\lambda_3$, given the values $a,b,c$.  We fix these values and show that the resulting subvariety $X$ has dimension $\le 3$.  Indeed, $e_{x_1,\SL_2}$ maps $X$ to the (two-dimensional) conjugacy class $Y$ of the diagonal matrix 
$$D_{\lambda_1} := \begin{pmatrix} \lambda_1&0 \\ 0&\lambda_1^{-1}\end{pmatrix}.$$
As this morphism respects the conjugation action of $\SL_2$, and $Y$ consists of a single $\SL_2$-orbit, the dimension of the fibers $X_y$ does not depend on $y$.
We therefore consider the fiber $X_{D_{\lambda_1}}$.  By Theorem~\ref{dim-variation}, it suffices to prove that this dimension is $1$.  
To do this, we note that if
$$\phi(x_2) = \begin{pmatrix}z_{11}&z_{12} \\ z_{21}&z_{22}\end{pmatrix}$$
then $z_{11}$ and $z_{22}$ are uniquely determined by the conditions $\trace(\phi(x_2)) = \lambda_2+\lambda_2^{-1}$ and $\trace(\phi(x_3)) = \lambda_3+\lambda_3^{-1}$.  The product $z_{12}z_{21}$ is then uniquely determined by the determinant $1$ condition on $\phi(x_2)$.  As $\phi$ is determined by $\phi(x_1)$ and $\phi(x_2)$,
this implies that $\dim X_{D_{\lambda_1}}=1$.

\end{proof}

We remark that one could bypass Theorem~\ref{no-vss} here by proving directly that there are finitely many $\SL_2(\C)$ orbits of homomorphisms
$\Gamma\to \SL_2(\C)$.  However, the argument above gives a non-trivial example in which the hypotheses of Theorem~\ref{no-vss} can be checked.

\section{An application to double word maps}
\label{ApplicationsSection}

Motivated by the desire to show that generic random walks on a finite simple group never live inside an algebraic subgroup, Breuillard, Green, Guralnick, and Tao prove the following 

\begin{thm} (\cite{BGGT10})
Let $w_1, w_2$ be two elements in a free group of rank $2$.
Let $a, b$ be generic elements of a semisimple Lie group $G$ over an algebraically closed field. Then $w_1(a,b)$ and $w_2(a,b)$ generate a Zariski-dense subgroup of $G$.
\end{thm}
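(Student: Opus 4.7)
The plan is to prove the stronger assertion that for $(a,b)$ in a dense subset of $G \times G$, the pair generates a \emph{strongly dense} subgroup of $G$---one in which every non-cyclic subgroup is Zariski-dense. Granting this, the theorem follows immediately: the conclusion has content only when $w_1, w_2$ generate a non-cyclic subgroup of $F_2$ (otherwise $\langle w_1(a,b), w_2(a,b)\rangle$ is cyclic and so cannot be Zariski-dense in positive-dimensional semisimple $G$), in which case $\langle w_1(a,b), w_2(a,b)\rangle$ is a non-cyclic subgroup of the strongly dense group $\langle a, b\rangle$, hence Zariski-dense.

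To establish strong density on a dense set of $(a,b)$, I would first enumerate the obstructions: $\langle a, b\rangle$ fails to be strongly dense precisely when some non-cyclic pair $(u_1,u_2)\in F_2\times F_2$ has both $u_i(a,b)$ lying in a common proper closed subgroup $H\subsetneq G$. Grouping these obstructions by the $G$-conjugacy class of the identity component $H^\circ$ and appealing to the classical structure theory of maximal closed subgroups of semisimple algebraic groups, one is reduced to finitely many ``types'' of maximal connected proper subgroups (parabolic subgroups, reductive subgroups of maximal rank, and Lie-primitive subgroups), with each type forming a single $G$-orbit.

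For each such type $T$ with representative $H_T$ and each non-cyclic pair $(u_1, u_2)$, the locus
$$Z_{T, u_1, u_2} := \{(a,b) \in G^2 : u_1(a,b),\, u_2(a,b) \in g H_T g^{-1} \text{ for some } g \in G\}$$
is a constructible subset of $G^2$. One can show that its closure is a proper subvariety by a dimension count, using the dominance of the evaluation maps $e_{u_i} \colon G^2 \to G$ (which holds by Borel's theorem, since $u_i \neq 1$) together with the strict upper bound on $\dim H_T$. Taking the union over the countably many pairs $(u_1, u_2)$ and the finitely many types $T$, one obtains a countable union of proper closed subvarieties of $G^2$; Theorem~\ref{Baire} then produces a dense $G_\delta$ of $(a,b)$ generating a strongly dense subgroup.

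The hardest part, I expect, is the treatment of Lie-primitive maximal subgroups: their moduli require concrete representation-theoretic input, and the ambient dimension bounds are subtle, especially when $H_T$ and its normalizer are nearly as large as $G$. This is precisely where the BGGT argument invokes detailed estimates on character values of finite groups of Lie type, in spirit reminiscent of the character bounds in Appendix~\ref{CharacterBoundsAppendix} used in the proof of Theorem~\ref{QuadTheorem}.
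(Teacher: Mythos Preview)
This theorem is not proved in the present paper; it is quoted from \cite{BGGT10} as motivation for \S\ref{ApplicationsSection}, so there is no in-paper proof to compare against. That said, your overall architecture---reduce to showing that a generic pair $(a,b)$ generates a \emph{strongly dense} free subgroup, then intersect countably many open conditions via Theorem~\ref{Baire}---is precisely the framework of \cite{BGGT10}, and your observation that the statement has content only when $\langle w_1,w_2\rangle$ is non-cyclic is correct.

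The substantive gap is the sentence ``One can show that its closure is a proper subvariety by a dimension count, using the dominance of the evaluation maps $e_{u_i}$ \dots together with the strict upper bound on $\dim H_T$.'' Dominance of the \emph{individual} word maps $e_{u_1},e_{u_2}$ (Borel) tells you that each hypersurface $e_{u_i}^{-1}(gH_Tg^{-1})$ has dimension $\dim G+\dim H_T$ in $G^2$, but gives no upper bound on the dimension of their intersection: the two conditions can be highly correlated (trivially so when $u_1=u_2$, but also in subtler ways for non-commuting $u_1,u_2$). What you would actually need is that the \emph{double} word map $(e_{u_1},e_{u_2})\colon G^2\to G^2$ has generic fibres of the expected dimension, which is essentially the assertion under discussion---so the argument is circular as written. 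The proof in \cite{BGGT10} does not proceed by a dimension count. Instead, assuming $Z_{T,u_1,u_2}=G^2$, they produce a $G$-equivariant rational map $G^2\dashrightarrow G/N_G(H_T)$ (assigning to a generic $(a,b)$ the unique conjugate of $H_T$ containing both $u_i(a,b)$) and derive a contradiction by specialising one of the variables and running an induction on the pair $(u_1,u_2)$ and on the subgroup $H_T$.

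A smaller correction: the algebraic-group statement in \cite{BGGT10} is proved by purely algebro-geometric means and does not invoke character estimates; the character-theoretic input appears in the companion paper \cite{BGGT13} on expansion in finite simple groups, which is a different application. The analogy you draw with Appendix~\ref{CharacterBoundsAppendix} is therefore misplaced here.
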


This theorem allows Breuillard, Green, Guralnick, and Tao to prove results on \emph{expanding generators} (elements that generate a Cayley graph which is an expander) in groups of Lie type in a companion paper \cite{BGGT13}.
In light of their result, they ask the following:

\begin{ques}[Problem 2 in \cite{BGGT10}]
Can one characterize the set of pairs of words $(w_1, w_2)$ in the free group $F_2$ such that the double word map $G\times G \to G \times G$ given by 
$$e_{w_1,w_2}(a,b)  =  (w_1(a,b),w_2(a,b))$$
is dominant? 
\end{ques}

\noindent
In response to their problem we supply:
\newcommand{\bfa}{\mathbf{a} }

\begin{proof}[Proof of Theorem \ref{MainAnswerThm}]
It suffices to prove the theorem for $G$ simply connected and simple.
Let $w_1$ and $w_2$ be fixed, and let $\phi := e_{w_1,w_2}$  denote the double word map associated to $(w_1,w_2)$.
It suffices to find a point $\bfa := (a_1,b_1,\ldots,a_k,b_k)\in G^{2k}(\C)$ at which the map $\phi_*$ of tangent spaces is surjective.
By Theorem~\ref{smooth-vs-dominant}, this implies that $\phi$ is dominant.

Let $\Gamma$ denote the quotient of the free group in $2k$ generators $x_i$ by the normal subgroup generated by the conjugacy class of
$w_1$, and let $\bar w_2\neq 1$ denote the image of $w_2$ in this group.
We can identify the homomorphism variety $\Hom(\Gamma,G)$ with the variety $e_{w_1}^{-1}(1)$.  This is an irreducible variety of dimension $(2k-1)\dim G$ \cite{MR0169956,MR1206154}, and we choose a regular point $\bfa \in w_1^{-1}(1)\subset G^{2k}(\C)$
at which the evaluation map $e_{\bar w_2}$ induces a surjective map of tangent spaces
$\Hom(\Gamma,G) = e_{w_1}^{-1}(1)\to G$.  Indeed, surface groups are residually free and therefore
$G$-free, so $e_{\bar w_2}$ is dominant.  By Theorem~\ref{smooth-vs-dominant}, there is an open set on which $e_{\bar w_2}$ induces a surjection of tangent
spaces.

By Theorem~\ref{CM}, $e_{w_1\,\ast}\colon T_{\bfa} G^{2k}\to T_1 G$ is surjective.
We consider the map 
$$\phi_*\colon T_{\bfa}(G^{2k})\to T_{(1,\bar w_2(\bfa))}(G\times G) = T_1(G)\oplus T_{ w_2(\bfa)}(G).$$
Composition with projection unto the first summand gives the map $e_{w_1\,\ast}$
To prove the surjectivity of $\phi_*$, it suffices to prove that at $\bfa$, 
$\phi_* \ker e_{w_1\,\ast}$ projects  onto $T_{ w_2(\bfa)}(G)$.  It is clear that $T_{\bfa} e_{w_1}^{-1}(1)$ lies in the kernel of $e_{w_1\,\ast}$,
and we have chosen $\bfa$ such that $e_{\bar w_2\,\ast}$ is surjective.  The theorem follows.

\end{proof}

\noindent
It is natural to ask whether the constraints on $w_1$ can be weakened.
In particular, if we can handle words $w_1$ is of the form
$$h(x_1, \ldots, x_k) h(x_{k+1}, \ldots, x_{2k}).$$ 

\noindent

\appendix
\section{Algebraic geometry background}

\label{AlgebraicGeometryBackgroundAppendix}

We collect here some basic terminology and known facts regarding algebraic varieties, with special reference to
representation varieties.

By a \emph{variety} $X$ over a field $K$, we mean in this paper, an affine scheme of finite type over $K$, i.e. $X = \Spec A$, where
$A$ is a finitely generated $K$-algebra, the \emph{coordinate ring of $X$}.  Recall that as a set, $X$ consists of the prime ideals of $A$.  It is 
endowed with the Zariski-topology, where the closed
sets 
$$V(I) = \{P\in \Spec A\mid I \subset P\} $$
are in one-to-one correspondence with radical ideals $I = \rad(I)$ of $A$.  
Maximal ideals $\m$ of $A$ correspond to points of $X$ which are closed in the Zariski topology.
The set of closed points of $X$ is Zariski-dense in $X$ (\cite[\S4.5]{MR1322960}); note that this is a property of spectra of finitely generated algebras over
a field and is not true for general affine schemes.
The \emph{Zariski tangent space} $T_x(X)$ at a closed point $x$ in $X$ corresponding to 
a maximal ideal $\m$ of $A$ is the linear dual of $\m/\m^2$ regarded as a vector space over $A/\m$.

A topological space is \emph{irreducible} if it cannot be realized as a finite union of proper closed subsets.
Thus, the closed set $V(I)$ is irreducible if and only $I$ is a prime ideal.
By the Hilbert basis theorem, $A$ is a Noetherian ring; the ascending chain condition on ideals implies 
the descending chain condition on closed subsets, so the process of decomposing closed subsets of $\Spec A$ into finite unions of
proper closed subsets must terminate, and $\Spec A$ can be written as a finite union of irreducible components.
Each component can be regarded as a variety in its own right, namely $\Spec A/P$, where $P$ is the prime ideal associated to the component.
Every prime ideal of $A$ contains the nilradical $\rad A$, and $X$ is irreducible if and only if $\rad A$ is prime, in which case
it is the unique minimal prime ideal, which will be denoted $\eta\in X$.  If $I\subset \rad A$, every prime ideal of $A$ contains $I$, so that from a topological point of view
there is no difference between $\Spec A$ and $\Spec A/I$.

If $L$ is an extension field of $K$, we denote by $X(L)$ the set of $L$-points of $X$, i.e., the set of $K$-homomorphisms $A\to L$.
In particular, a linear algebraic group (and all algebraic groups in this paper are assumed to be linear) is a $K$-variety, whether or not
it is connected.  If $K$ is algebraically closed, then by Hilbert's Nullstellensatz, $X(K)$ is identified with the closed points of $X$, so $X(K)$ is Zariski-dense in $X$.
We will usually consider the case $K=\C$, and we sometimes fail to distinguish between $X$ and $X(\C)$ when this is unlikely to cause confusion.

A \emph{morphism of varieties} $X = \Spec A\to Y = \Spec B$ is  a homomorphism $f\colon B\to A$ of $K$-algebras.
If $y\in Y$ corresponds to the prime ideal $P$ of $B$, then the \emph{fiber} of $f$ at $y$, denoted by $X_y$, is the variety over $\Frac B/P$ associated to $A\otimes_K\Frac B/P$,
where $\Frac$ denotes the field of fractions of an integral domain.

\begin{thm}
If $f\colon X\to Y$ is a morphism and $X$ is irreducible, then the Zariski closure of $f(X)$ is irreducible.
\end{thm}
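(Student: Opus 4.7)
The plan is to translate to commutative algebra and reduce the claim to the elementary fact that the preimage of a prime ideal under a ring homomorphism is prime. Write $X = \Spec A$, $Y = \Spec B$, and let $\varphi\colon B\to A$ be the $K$-algebra homomorphism inducing $f$. Since the excerpt records that $X$ is irreducible if and only if $\rad A$ is prime, my starting point is that $\mathfrak p := \rad A$ is a prime ideal of $A$.

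Next I would identify the Zariski closure of $f(X)$ explicitly. The closure of any subset $S\subset Y$ is $V(I)$ for the radical ideal $I = \bigcap_{Q\in S} Q$. Taking $S = f(X) = \{\varphi^{-1}(P) : P\in \Spec A\}$ and using that preimage commutes with intersection, I get
\[
I \;=\; \bigcap_{P\in \Spec A} \varphi^{-1}(P) \;=\; \varphi^{-1}\!\Bigl(\bigcap_{P\in \Spec A} P\Bigr) \;=\; \varphi^{-1}(\mathfrak p).
\]
Thus $\overline{f(X)} = V(\varphi^{-1}(\mathfrak p))$.

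To finish, I would invoke the standard fact that if $\mathfrak p$ is a prime ideal of $A$, then $\varphi^{-1}(\mathfrak p)$ is a prime ideal of $B$: if $bb'\in \varphi^{-1}(\mathfrak p)$, then $\varphi(b)\varphi(b')\in \mathfrak p$, so by primality one of $\varphi(b),\varphi(b')$ lies in $\mathfrak p$, hence one of $b,b'$ lies in $\varphi^{-1}(\mathfrak p)$; and $\varphi^{-1}(\mathfrak p)\neq B$ because $\varphi(1)=1\notin \mathfrak p$. Since $\overline{f(X)}$ is cut out by a prime ideal, it is irreducible, which is what we wanted.

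There is no serious obstacle: the proof is essentially a bookkeeping exercise, with the only substantive input being the characterization of irreducibility in terms of the nilradical (already recorded in the appendix) and the elementary primality of preimages. If desired, one could alternatively give a topological proof: if $\overline{f(X)} = Z_1\cup Z_2$ with $Z_i$ proper closed, then $X = f^{-1}(Z_1)\cup f^{-1}(Z_2)$ is a union of two closed subsets, and since $X$ is irreducible, $X = f^{-1}(Z_i)$ for some $i$, forcing $\overline{f(X)}\subset Z_i$, a contradiction. I would likely include the algebraic version since it fits the scheme-theoretic language of the appendix.
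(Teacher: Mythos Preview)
Your proof is correct and is essentially the paper's argument unwound into explicit ideal computations: the paper phrases the same content via the generic point, observing that $\overline{\{f(\eta)\}}$ is irreducible (closure of a point) and that $\{f(\eta)\}\subset f(X)\subset \overline{\{f(\eta)\}}$ forces $\overline{f(X)}=\overline{\{f(\eta)\}}$; since $\eta=\rad A$ and $f(\eta)=\varphi^{-1}(\rad A)$, this is exactly your computation that $\overline{f(X)}=V(\varphi^{-1}(\mathfrak p))$ with $\mathfrak p$ prime. Your alternative topological argument (pulling back a decomposition $Z_1\cup Z_2$) is a genuinely different and more elementary route that avoids scheme language entirely.
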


\begin{proof}
As $X$ is irreducible, the point $\eta$, corresponding to the nilradical of the coordinate ring of $A$, is Zariski dense in $X$.
The closure of any point of a topological space is irreducible, so $\overline{f(\eta)}$ is an irreducible subset of $Y$.  As 
$$\{f(\eta)\} \subset f(X)\subset \overline{f(\eta)},$$
taking closures, we obtain $\overline{f(X)} = \overline{f(\eta)}$.
\end{proof}

Let  $\Gamma$ be a group, $K$ a field, and $G$ an algebraic group over $K$.
If $X$ is a $K$-variety with coordinate ring $A$,
$\Phi\colon \Gamma\to G(A)$ is a homomorphism, and $B$ is a $K$-algebra, every element of $X(B)$ determines a homomorphism
$A\to B$, therefore a homomorphism $G(A)\to G(B)$, and by composition with $\Phi$, a homomorphism $\Gamma\to G(B)$.
If $\Gamma$ is finitely generated, then
there exists a finitely generated $K$-algebra $A$ and a homomorphism $\Phi\colon \Gamma\to G(A)$ which is universal in the sense that
for all $K$-algebras $B$, there is
a one-to-one correspondence between homomorphisms $\Gamma\to G(B)$ 
and elements of $X(B)$.
(for a proof, see \cite[Prop.~1.2]{MR818915}, where a valid general argument
is given under the unnecessary hypotheses that
$K$ is algebraically closed and of characteristic zero and $G = \GL_n$.)
When $\Gamma$ is a free group on $d$ generators, $X = G^d$.  More generally,
if $\Gamma$ is a quotient of $F_d$, we can regard $X$ the closed subvariety of $G^d$
given by the conditions that each relation word $\gamma\in \ker F_d\to \Gamma$
maps to the identity in $G$.  In particular, for a group $\Gamma$ given by a single relation $\gamma\in F_d$,
we can identify $\Hom(\Gamma,G)$ with the fiber of the evaluation morphism $e_{F_d,\gamma}\colon G^d\to G$
at the identity $1\in G(K)$.

We observe (see \cite{MR818915}) that this construction works over a general commutative ground ring $R$, so if $G$ is an affine group
scheme over $R$, then $\Hom(\Gamma,G)$ is represented by $\Spec A$ for some finitely generated $R$-algebra $A$.  This construction respects change of base ring
so for example if $R =\Z$, $G = \SL_{n,\Z}$, and $\Hom(\Gamma,G)$ is represented by a $\Z$-algebra A, then for all primes $p$, $\Hom(\Gamma,\F_p)$ is represented by $A\otimes\F_p$.

If $Y$ is an irreducible variety, we say a morphism $f\colon X\to Y$ is \emph{dominant} if $f(X)$ is Zariski-dense in $Y$.  This is equivalent to the
statement that $f(\Omega)$ is dense in $Y$ for some irreducible component $\Omega$ of $X$.  The following theorem is an immediate consequence of Chevalley's theorem \cite[Cor.~14.7]{MR1322960}:
\begin{thm}
\label{constructible}
The image of a dominant morphism $X\to Y$ contains a non-empty open subset of $Y$.
\end{thm}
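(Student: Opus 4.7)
The plan is to deduce the result directly from Chevalley's theorem (cited in the statement) together with a short topological observation about constructible sets in an irreducible space. There is no hard step here; the content is essentially point-set topology once Chevalley is granted.

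First I would invoke Chevalley's theorem to conclude that $f(X) \subseteq Y$ is a constructible subset, i.e., there is a finite decomposition
$$f(X) = L_1 \cup \cdots \cup L_n,$$
where each $L_i$ is locally closed in $Y$, meaning $L_i = U_i \cap C_i$ for some open $U_i \subseteq Y$ and some closed $C_i \subseteq Y$. This is the only external input needed.

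Next I would exploit the fact that $Y$ is irreducible (this is built into the definition of ``dominant'' given in the excerpt). Since $f$ is dominant, $f(X)$ is dense in $Y$, and taking closures gives
$$Y = \overline{f(X)} = \overline{L_1} \cup \cdots \cup \overline{L_n}.$$
By irreducibility of $Y$, some $\overline{L_j}$ must equal $Y$. But $\overline{L_j} \subseteq C_j$, which forces $C_j = Y$, and therefore $L_j = U_j$ is a non-empty open subset of $Y$ contained in $f(X)$.

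The only point that requires a moment's care is that the author allows non-reduced scheme structure on varieties, so ``irreducible'' must be interpreted topologically; but since $\Spec A$ and $\Spec A/\rad(A)$ are homeomorphic, this causes no issue, and the constructible decomposition survives unchanged. Thus the entire proof is a two-line corollary of Chevalley's theorem, as the excerpt suggests.
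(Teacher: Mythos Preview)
Your argument is correct and is exactly the deduction the paper has in mind: the paper gives no proof beyond stating that the theorem is ``an immediate consequence of Chevalley's theorem,'' and you have supplied precisely the standard two-line unpacking of that claim (constructible dense subset of an irreducible space contains a nonempty open). Nothing further is needed.
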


By the \emph{dimension} of $X=\Spec A$, we mean the Krull dimension of $A$, which is the maximum
possible length $n$ of a chain of prime ideals $P_0\subsetneq P_1\subsetneq \cdots\subsetneq P_n$ of $A$.
Note that the dimension of a variety is the maximum of the dimensions of its components.
For an irreducible variety $\Spec A$, the dimension of $X$ equals the transcendence degree of $\Frac A/\rad A$ \cite[\S8.2.1, Th.~A]{MR1322960}.
Given a morphism $X\to Y$, the function $y\mapsto \dim X_y$ is not in general constant on irreducible components of $Y$.
However, we have the following facts:

\begin{thm}
\label{dim-variation}
Let $f\colon X\to Y$ be a morphism of irreducible varieties.  Then:
\begin{enumerate}
\item The function which assigns to each $x\in X$ the dimension of the irreducible component of $X_{f(x)}$ to which $x$ belongs is
upper semicontinuous.
\item If $f$ is dominant, there exists a non-empty open set of $Y$ on which every irreducible component of $X_y$ has dimension $\dim X - \dim Y$.
\item If $f$ is dominant and flat in a neighborhood of $y$ (i.e., $B[1/b]$ is flat as an $A[1/\phi(b)]$-module for some $b$
which does not lie in the prime ideal associated to $y$), then $\dim X_y = \dim X - \dim Y$.
\item Every component of $X_y$ has dimension at least $\dim X - \dim Y$.
\end{enumerate}
\end{thm}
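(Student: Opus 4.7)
The plan is to establish the four parts in the order (4), (3), (2), (1), since each subsequent item either relies on or refines the previous ones.

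For (4), the strategy is to reduce, by replacing $Y$ with an affine open neighborhood of $y$ and $X$ with its preimage, to the case where $Y = \Spec B$ and $y$ corresponds to a maximal ideal $\m\subset B$. By Noether normalization applied to $B/\rad B$, together with Krull's height theorem, one can find elements $b_1,\ldots,b_n\in \m$ with $n = \dim Y$ such that $V(b_1,\ldots,b_n)$ is set-theoretically $\{y\}$. Pulling back, $X_y$ is defined in $X$ by the $n$ functions $f^*(b_1),\ldots,f^*(b_n)$. Iterating Krull's Hauptidealsatz $n$ times, each irreducible component of the resulting closed subscheme has codimension at most $n$ in $X$, and therefore dimension at least $\dim X - \dim Y$.

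For (3), the idea is to combine (4) with the dimension formula for flat morphisms. Under flatness of $B[1/b] \to A[1/\phi(b)]$, going-down holds, and this forces $\dim_x X_{f(x)} = \dim_x X - \dim_{f(x)} Y$ on the flat locus; this upper bound paired with the universal lower bound from (4) gives equality. For (2), the key ingredient is the theorem of generic flatness of Grothendieck: after shrinking $Y$ to a non-empty open subset $V$, the restricted morphism $f^{-1}(V)\to V$ is flat, whence (3) yields the uniform fiber dimension $\dim X - \dim Y$ on $V$. (Alternatively one can invoke Chevalley's theorem on constructible images and a transcendence-degree computation $\mathrm{tr.deg}_{K(Y)}K(X) = \dim X - \dim Y$ to conclude the same.)

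For (1), the semicontinuity statement, the plan is induction on $\dim Y$. Writing
\[
E_d := \{x\in X \mid \dim_x X_{f(x)} \ge d\},
\]
the goal is to show each $E_d$ is closed. After reducing to the case $X$ irreducible, by (2) there is an open $V\subset Y$ on which all fibers have dimension $\dim X - \dim Y$. If $d \le \dim X - \dim Y$, then $E_d = X$; otherwise $E_d \subseteq f^{-1}(Y\setminus V)$, and $Y\setminus V$ is a proper closed subvariety of strictly smaller dimension, so the claim follows by applying the inductive hypothesis to the restriction of $f$ to each irreducible component of $f^{-1}(Y\setminus V)$, mapping to its image closure in $Y\setminus V$. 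The main technical obstacle here is keeping track of dimensions along multiple components and ensuring that the upper-semicontinuity passes cleanly through the restriction; this is handled by the fact that upper-semicontinuous functions glue over a finite cover by closed subsets.
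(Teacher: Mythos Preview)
Your approach is quite different from the paper's. The paper cites \cite[Th.~14.8a]{MR1322960} directly for (1), computes the generic-fiber dimension $\dim X_\eta = \dim X - \dim Y$ via \cite[Cor.~13.5]{MR1322960}, and then deduces (2) and (4) from (1) applied at the generic point of $X$; part (3) follows from (2) together with \cite[Th.~10.10]{MR1322960}. Your route $(4)\Rightarrow(3)\Rightarrow(2)\Rightarrow(1)$ via the Hauptidealsatz, the flat dimension formula, generic flatness, and Noetherian induction on $\dim Y$ is also standard and has the merit of being more self-contained; parts (1)--(3) go through essentially as you describe.

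There is, however, a genuine gap in your argument for (4). You reduce ``to the case where $y$ corresponds to a maximal ideal $\m\subset B$'' by passing to an affine open neighborhood, but this is impossible when $y$ is a non-closed point: no Zariski-open set turns a non-closed point into a closed one. The correct reduction is to \emph{localize} at $y$: replace $B$ by $B_{\mathfrak p}$ (where $\mathfrak p$ is the prime of $y$) and $A$ by $A\otimes_B B_{\mathfrak p}$. Then $y$ becomes the closed point of $\Spec B_{\mathfrak p}$, a system of parameters for $\mathfrak p B_{\mathfrak p}$ has $\dim B_{\mathfrak p}$ elements, and your Hauptidealsatz argument gives $\dim W \ge \dim(A\otimes_B B_{\mathfrak p}) - \dim B_{\mathfrak p}$ for each component $W$ of $X_y$; a short computation using that finitely generated domains over a field are catenary and equidimensional then converts this into $\dim W \ge \dim X - \dim Y$. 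In every application the paper makes of (4) the point $y$ is closed (typically the identity of an algebraic group), so your version already suffices there, but as written it does not establish the statement in full.
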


\begin{proof}
The first part is a special case of \cite[Th.~14.8a]{MR1322960}.  
For the remaining statements,  let us first assume that $f$ is dominant,
i.e., we have a homomorphism $\phi\colon B\to A$ for which $f^{-1}(\rad A) = \rad B$.
Replacing $A$ and $B$ by $A/\rad A$ and $B/\rad B$ respectively and $\phi$ by the induced homomorphism
$A/\rad A\to B/\rad B$, $A$ and $B$ become integral domains and $\phi$ becomes an injection, but at the level of points nothing changes.
So, we can apply \cite[Cor.~13.5]{MR1322960} and deduce that the dimension of the generic fiber (which is irreducible since $X$ is) 
satisfies $\dim X_\eta = \dim X - \dim Y$.  Part 1 now implies parts 2 and 4  in the dominant case.  Part 3 follows from part 2 and \cite[Th.~10.10]{MR1322960}.
For part 4, if $f$ is not dominant, we can replace 
$B$ by $B/\ker \phi$, which means, topologically, that $Y$ is replaced by $\overline{f(X)}$.  The general statement follows.

\end{proof}

The following result is an algebraic analogue of the Baire category theorem.

\begin{thm}
\label{Baire}
Let $K$ be an algebraically closed field and $J$ a set whose cardinality is strictly less than that of $K$.
If $X$ is an irreducible variety over $\C$ and $\{X_j\mid j\in J\}$ a  collection of proper closed subvarieties, then
$$X(K)\setminus \bigcup_{j\in J}X_j(K)$$
is non-empty.
\end{thm}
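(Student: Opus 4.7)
The plan is to prove Theorem~\ref{Baire} by reducing to the case $X = \mathbb{A}^n$ via Noether normalization, and then running the classical Baire-type argument by induction on $n$.

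First, observe that since $K$ is algebraically closed, $X(K)$ is identified with the set of closed points of $X$. Apply Noether normalization to obtain a finite surjective morphism $\pi \colon X \to \mathbb{A}^n$, where $n = \dim X$. Because $\pi$ has finite fibers, $\dim \pi(X_j) \le \dim X_j < n$, and $\pi(X_j)$ is closed since $\pi$ is a finite (hence closed) morphism, so $\pi(X_j) \subsetneq \mathbb{A}^n$ for every $j$. Any $K$-point of $\mathbb{A}^n$ avoiding every $\pi(X_j)(K)$ has non-empty fiber under $\pi$ (by surjectivity over an algebraically closed field, using that $\pi$ is finite), and any point in the fiber gives the desired $K$-point of $X \setminus \bigcup_j X_j$. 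This reduces the problem to the case $X = \mathbb{A}^n$.

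Next, since each $X_j \subsetneq \mathbb{A}^n$ has dimension $< n$, enlarge $X_j$ to a hypersurface $V(f_j)$ for some non-zero $f_j \in K[x_1, \ldots, x_n]$. It now suffices to show: given $|J| < |K|$ non-zero polynomials $\{f_j\}_{j \in J}$, there exists $(a_1, \ldots, a_n) \in K^n$ with $f_j(a_1, \ldots, a_n) \ne 0$ for all $j$. I will induct on $n$. For $n = 1$, each $f_j$ has at most $\deg f_j$ roots, so the total set of bad points in $K$ has cardinality at most $|J| \cdot \aleph_0$. Since $K$ is infinite (being algebraically closed), this is strictly less than $|K|$: if $|J|$ is finite the bad set is countable while $K$ is infinite, and if $|J|$ is infinite then $|J| \cdot \aleph_0 = |J| < |K|$. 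For $n \ge 2$, write each $f_j$ as a polynomial in $x_n$ with coefficients in $K[x_1, \ldots, x_{n-1}]$, let $g_j$ be a non-zero such coefficient, and apply the inductive hypothesis to the collection $\{g_j\}$ (thinking of them as defining proper closed subvarieties of $\mathbb{A}^{n-1}$) to obtain $(a_1, \ldots, a_{n-1}) \in K^{n-1}$ with $g_j(a_1, \ldots, a_{n-1}) \ne 0$ for every $j$. Then each $f_j(a_1, \ldots, a_{n-1}, x_n)$ is a non-zero polynomial in one variable, and the $n = 1$ case supplies the required $a_n$.

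The only subtle point is the cardinality arithmetic in the base case, handled above; the rest is a straightforward two-stage reduction. I do not anticipate any significant obstacle, since Noether normalization is standard and the diagonal polynomial argument is essentially the proof one would give in the familiar case $K = \C$, $|J| \le \aleph_0$.
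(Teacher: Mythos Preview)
Your proof is correct and takes a genuinely different route from the paper's. The paper inducts directly on $\dim X$: for the base case $\dim X = 1$ it shows $|X(K)| \ge |K|$ via a non-constant morphism to $\mathbb{A}^1$ and uses that each proper $X_j$ is finite; for the induction step it again chooses a non-constant morphism $X \to \mathbb{A}^1$, argues that each proper $X_j$ can contain components of at most finitely many fibers, and hence (by the same cardinality count) some fiber has an irreducible component $X'$ not contained in any $X_j$, to which one recurses. Your approach instead front-loads the geometry by passing once to $\mathbb{A}^n$ via Noether normalization and then runs a purely polynomial argument, peeling off one coordinate at a time by selecting a non-zero coefficient. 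The paper's argument is more intrinsic (it never leaves $X$ and needs only a single non-constant regular function at each stage), while yours makes the induction step completely explicit and the cardinality bookkeeping transparent; both ultimately rest on the same $|J|<|K|$ count.

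One small imprecision: in your base case you bound the bad set by $|J|\cdot\aleph_0$ and then write ``if $|J|$ is finite the bad set is countable while $K$ is infinite.'' That comparison does not conclude when $K$ itself is countable (e.g.\ $K=\overline{\mathbb{Q}}$). The fix is immediate: when $|J|$ is finite the bad set is a \emph{finite} union of finite sets, hence finite, and any algebraically closed field is infinite.
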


\begin{proof}
Without loss of generality we can assume that $X$ is irreducible.  Replacing $A$ by $A/\rad A$ does not change the underlying set,
so without loss of generality we can assume $A$ is an integral domain. 
We use induction on $\dim X$.  If $\dim X=1$, every proper subvariety has a finite number of $0$-dimensional
components, so it suffices to prove that $|X(K)| \ge |K|$.   Let $a$ denote any element of its fraction field which 
is not in $K$.  Then $t\mapsto a$ defines a morphism $\Spec A\to \Spec K[t]$.  The Zariski closure of the image is irreducible, hence either a point
or all of $\Spec K[t]$.  As $a$ is not constant, Theorem~\ref{constructible} implies
that the image contains all but finitely many elements of $K$.  Since $K$ is infinite, the cardinality of the image
equals that of $K$, which finishes the base case.

For the induction step, we again use a  morphism $X\to \Spec K[t]$ given by a non-constant element $a$.  Again the image contains all but
finitely many points of $\Spec K[t]$ and by Theorem~\ref{dim-variation},
all components of all non-empty fibers of this morphism have dimension $\dim X-1$.
Therefore, no irreducible proper subvariety of $X$ can contain a component of more than one fiber, and no proper subvariety of $X$ can contain
components of more than a finite number of fibers.  It follows that some non-empty fiber has an irreducible component $X'$ not contained in any of the $X_j$.
Replacing $X$ by $X'$ and each $X_j$ by $X'\cap X_j$, the theorem follows by induction.
\end{proof}

In particular, we apply this theorem in the case that $K=\C$ and $J$ is countable.

\begin{thm}
\label{geom-irred}
The following three conditions on a finitely generated $K$-algebra $A$ are equivalent:
\begin{enumerate}
\item $\Spec A\otimes_K L$ is irreducible for all finite $K$-extensions $L$.
\item $\Spec A\otimes_K \bar K$ is irreducible for some algebraic closure $\bar K$ of $K$.
\item $\Spec A\otimes_K L$ is irreducible for all $K$-extensions $L$.
\end{enumerate}
\end{thm}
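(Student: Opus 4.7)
The implications $(3)\Rightarrow(1)$ and $(3)\Rightarrow(2)$ are tautological, since finite extensions and $\bar K$ are instances of arbitrary $K$-extensions; the substance is therefore in $(1)\Rightarrow(2)$ and $(2)\Rightarrow(3)$.

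For $(1)\Rightarrow(2)$, I would argue by contrapositive, using that $\Spec R$ is irreducible iff the nilradical of $R$ is prime. If $\Spec A\otimes_K\bar K$ were reducible, we could find $f,g\in A\otimes_K\bar K$ with $fg$ nilpotent but neither $f$ nor $g$ nilpotent. Because $\bar K=\varinjlim L$ over finite extensions $L/K$, and each of $f,g$ involves only finitely many $\bar K$-coefficients when written as a tensor, we can pick a single finite $L/K$ with $f,g\in A\otimes_K L$. The inclusion $A\otimes_K L\hookrightarrow A\otimes_K\bar K$ is injective ($\bar K$ is a free $L$-module, and $A$ is flat over $K$), so an element is nilpotent in $A\otimes_K L$ precisely when it is nilpotent in $A\otimes_K\bar K$. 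Hence $\Spec A\otimes_K L$ is reducible, contradicting $(1)$.

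For $(2)\Rightarrow(3)$, let $L$ be an arbitrary $K$-extension, let $\bar L$ be an algebraic closure of $L$, and fix the embedding $\bar K\hookrightarrow\bar L$ identifying $\bar K$ with the algebraic closure of $K$ inside $\bar L$. Then $A\otimes_K\bar L\cong(A\otimes_K\bar K)\otimes_{\bar K}\bar L$. The strategy is two steps: first show $\Spec A\otimes_K\bar L$ is irreducible, then deduce $\Spec A\otimes_K L$ is irreducible from the fact that the base change morphism $\Spec A\otimes_K\bar L\to\Spec A\otimes_K L$ is surjective (by faithful flatness of $L\to\bar L$), so the target is a continuous image of an irreducible space.

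For the first step, set $B:=(A\otimes_K\bar K)_{\mathrm{red}}$; since $A\otimes_K\bar K$ is Noetherian its nilradical is nilpotent, so the extension of the nilradical to $A\otimes_K\bar L$ remains nilpotent, and the underlying topological space of $\Spec A\otimes_K\bar L$ coincides with that of $\Spec B\otimes_{\bar K}\bar L$. By $(2)$, $B$ is an integral domain with fraction field $E$, and flatness of $B\to E$ gives an injection $B\otimes_{\bar K}\bar L\hookrightarrow E\otimes_{\bar K}\bar L$. The main obstacle is the classical input that the tensor product of two field extensions of an algebraically closed field is an integral domain (see, e.g., Bourbaki, \emph{Algèbre} V); granting this, $E\otimes_{\bar K}\bar L$ is a domain, hence so is $B\otimes_{\bar K}\bar L$, and its spectrum is irreducible.
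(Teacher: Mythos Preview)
Your proof is correct, but the logical route differs from the paper's. The paper argues in the cycle $(1)\Rightarrow(3)\Rightarrow(2)\Rightarrow(1)$: for $(1)\Rightarrow(3)$ it simply cites EGA~IV, Prop.~4.5.9; $(3)\Rightarrow(2)$ is trivial; and for $(2)\Rightarrow(1)$ it observes that witnesses $f,g$ to reducibility of $A\otimes_K L$ (for finite $L$) persist in $A\otimes_K\bar K$ via the embedding $L\hookrightarrow\bar K$. Your $(1)\Rightarrow(2)$ is exactly the contrapositive of the paper's $(2)\Rightarrow(1)$, using the same finite-descent idea in the other direction. Where you genuinely diverge is in proving $(2)\Rightarrow(3)$ directly: you reduce to showing $B\otimes_{\bar K}\bar L$ is a domain (with $B=(A\otimes_K\bar K)_{\mathrm{red}}$) and invoke the classical fact that a tensor product of field extensions over an algebraically closed base is a domain, then pull irreducibility down along the faithfully flat surjection $\Spec A\otimes_K\bar L\to\Spec A\otimes_K L$. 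This effectively unpacks what the EGA reference is doing, trading a black-box citation for a concrete argument with a lighter external input (the Bourbaki lemma). Either way works; yours is more self-contained, the paper's is shorter on the page.
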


\begin{proof}
By \cite[Prop.~4.5.9]{MR0199181}, Condition (1) implies Condition (3).  It is trivial that Condition (3) implies Condition (2).  To see that Condition (2) implies Condition (1),
we note that $A_L:= A\otimes_K L$ fails to be irreducible for some finite extension $L$, if and only if there exist $f,g\in A_L$ which are not nilpotent and such that $fg = 0$.
Realizing $L$ as a $K$-subextension of $\bar K$, we conclude that $A_{\bar K}$ also contains non-nilpotent elements which multiply to zero.
\end{proof}

We say a variety $X = \Spec A$ over $K$ is \emph{geometrically irreducible} if $A$ satisfies any of these equivalent conditions.
From Theorem~\ref{geom-irred} and the finiteness of the set of irreducible components $\Spec A_{\bar K}$, which is a variety over $\bar K$,
it follows that $K$ has some finite extension $L$ such that $A_L$ is a finite union of geometrically irreducible components.  By the proof of \cite[Cor.~4.5.11]{MR0199181},
the field of fractions of $A_{\bar K}/P$, where $P$ is any minimal prime ideal, containsis a finite extension of $\Frac A/\rad A$, so the transcendence degrees over $K$
are equal.  It follows that  the dimension of every irreducible component of $\Spec A_L$ equals $\dim A$.

\begin{thm}
\label{generic-irred}
Let $A$ be a finitely generated $\Z$-algebra.  If there exist infinitely many primes $p$ such that $A\otimes \F_p$ is a geometrically irreducible 
$\F_p$-algebra of dimension $n$, then $A\otimes \C$ is irreducible of dimension $n$.
\end{thm}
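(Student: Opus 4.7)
The plan is to deduce the result from two standard constructibility facts about fibers of morphisms of finite type, combined with the very simple topology of $\Spec\Z$.

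Setting $X = \Spec A$, the structure morphism $f\colon X\to \Spec\Z$ is of finite type, hence of finite presentation ($\Z$ being Noetherian). By Chevalley's theorem on geometric irreducibility of fibers (EGA IV, \S 9.7), the set
$$E := \{y\in \Spec\Z : X_y \text{ is geometrically irreducible}\}$$
is constructible in $\Spec\Z$; likewise (EGA IV, \S 13.1) the set $F := \{y\in\Spec\Z : \dim X_y = n\}$ is constructible. By hypothesis, $E\cap F$ contains infinitely many closed points of $\Spec\Z$.

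Next, I would use the elementary fact that the proper closed subsets of $\Spec\Z$ are exactly the finite sets of closed points, so every nonempty open subset contains the generic point $(0)$. A simple case analysis then shows that any constructible subset of $\Spec\Z$ is either a finite set of closed points or contains $(0)$. Applied to $E\cap F$, this forces $(0)\in E\cap F$; that is, the generic fiber $X_\eta = \Spec A\otimes_\Z\Q$ is geometrically irreducible of dimension $n$.

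Finally, by Theorem~\ref{geom-irred}, geometric irreducibility of $A\otimes_\Z \Q$ implies irreducibility of $A\otimes_\Z L$ for every field extension $L/\Q$; taking $L=\C$, $A\otimes_\Z \C$ is irreducible. The Krull dimension of a finitely generated algebra over a field is invariant under extension of the base field, so $\dim A\otimes_\Z \C = n$, as required. The only real obstacle is invoking the constructibility results from EGA IV; one could instead give a hands-on argument, assuming $A\otimes_\Z \bar\Q$ is reducible, picking non-nilpotent $\bar a,\bar b$ with $\bar a\bar b$ nilpotent, clearing denominators into $A\otimes_\Z \mathcal O_L$ for a suitable number field $L$, and using generic freeness to show that for all but finitely many rational primes $p$ and some prime of $\mathcal O_L$ above $p$ the reductions of $a,b$ remain non-nilpotent while $ab$ becomes nilpotent, contradicting geometric irreducibility of $A\otimes\F_p$ for infinitely many $p$; a parallel Noether normalization argument then handles the dimension claim. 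But the EGA route is much cleaner.
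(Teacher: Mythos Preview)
The paper states Theorem~\ref{generic-irred} without proof; it appears in Appendix~\ref{AlgebraicGeometryBackgroundAppendix}, which collects background facts from algebraic geometry, and the text immediately following the statement is not a proof of it but rather a separate discussion leading to Theorem~\ref{converse-LW}. So there is no proof in the paper to compare against.

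Your argument is correct and is the standard way to obtain such ``spreading out'' results. The constructibility of the locus of geometrically irreducible fibers is EGA~IV$_3$, Th\'eor\`eme~9.7.7, and the constructibility of the locus where the fiber has a prescribed dimension follows from EGA~IV$_3$, Corollaire~13.1.5 (or Chevalley's semicontinuity). Your description of constructible subsets of $\Spec\Z$ is right, and the passage from geometric irreducibility over $\Q$ to irreducibility over $\C$ is exactly the content of Theorem~\ref{geom-irred}. The dimension claim follows since for a finitely generated algebra over a field, Krull dimension equals transcendence degree of the function field and is therefore stable under base-field extension. The hands-on alternative you outline also works and is closer in spirit to how one might argue without quoting EGA, but the constructibility route is indeed cleaner.
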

Replacing $X$ by $X^{\red} := A/\rad A$ does not change $X(L)$ for any field extension $L$ of $K$, so if $K$ is a finite field $\F_q$, we have
$|X(\F_{q^n}) = X^{\red}(\F_{q^n})$ for all $n\in \N$.   Now, $X^{\red}$ is a variety in the sense of Lang-Weil \cite{LW54},
so by the result of that paper,
$$|X(\F_{q^n})| = q^{n\dim X}(1+o(1)).$$
If $X$ is irreducible but not geometrically irreducible, then for some $m\in \N$, 
$$X_m := \Spec A\otimes_{\F_q}\F_{q^m}$$ 
decomposes as a union of 
$c\ge 2$ irreducible varieties of dimension $n$, and
$$|X(\F_{q^{mn}})| = |X_m(\F_{q^{mn}})| = c q^{mn\dim X}(1+o(1)).$$
For any variety, we can take $m$ sufficiently divisible that $X_m$ is a union of geometrically irreducible components, and
$$|X(\F_{q^{mn}})| = c_{\dim X} q^{mn\dim X}(1+o(1)),$$
where $c_k$ is the number of irreducible components of dimension $k$.  From this we deduce:

\begin{thm}
\label{converse-LW}
If $|X(\F_{q^n})| = (1+o(1)) q^{kn}$, then $X$ has a single component of dimension $k$, it is geometrically irreducible, and all other components have lower
dimension.
\end{thm}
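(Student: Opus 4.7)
The plan is to combine the hypothesis with the Lang--Weil asymptotic already derived in the paragraph immediately preceding the theorem. Let $d := \dim X$, the maximum dimension taken over the irreducible components of $X$. First I would choose $m \in \N$ sufficiently divisible that every irreducible component of $X_m := \Spec A \otimes_{\F_q} \F_{q^m}$ is geometrically irreducible; such an $m$ exists because $X$ has only finitely many irreducible components, and each splits as a union of geometrically irreducibles over some finite extension of $\F_q$. The displayed formula just before the theorem then gives
$$|X(\F_{q^{mn}})| = c_d\, q^{mnd}\,(1 + o(1))$$
as $n \to \infty$, where $c_d \ge 1$ is the number of top-dimensional geometrically irreducible components of $X_m$.

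Next I would apply the hypothesis along the subsequence of exponents of the form $mn$, which yields $|X(\F_{q^{mn}})| = (1 + o(1))\, q^{kmn}$. Comparing the two asymptotics as $n \to \infty$ forces both the exponents and the leading coefficients to agree: $d = k$ and $c_d = 1$. Consequently $X$ has no component of dimension exceeding $k$, and $X_m$ contains exactly one $k$-dimensional geometrically irreducible component.

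Finally I would translate the count $c_k = 1$ back into a statement about $X$ over $\F_q$. Each $\F_q$-irreducible component $Y$ of $X$ of dimension $k$ base-changes to a union of $s(Y) \ge 1$ geometrically irreducible $\F_{q^m}$-components of $X_m$ of dimension $k$, and these unions are disjoint at the generic points for distinct $Y$. Hence $c_k = \sum_Y s(Y)$, so $c_k = 1$ forces exactly one such $Y$ with $s(Y) = 1$; equivalently, $X$ has a unique $k$-dimensional component and it is geometrically irreducible, while all remaining components have dimension strictly less than $k$. The argument is essentially bookkeeping on top of the Lang--Weil estimate already established in the preceding discussion, so there is no serious obstacle; the only subtlety to check carefully is that lower-dimensional intersections between components of $X_m$ contribute only to the $o(q^{mnd})$ remainder, which follows from Lang--Weil applied componentwise.
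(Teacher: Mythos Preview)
Your proposal is correct and follows exactly the approach the paper intends: the theorem is stated immediately after the displayed formula $|X(\F_{q^{mn}})| = c_{\dim X}\, q^{mn\dim X}(1+o(1))$ with the words ``From this we deduce,'' and your argument simply makes explicit the comparison of asymptotics that yields $d=k$ and $c_k=1$, together with the routine translation from the geometric component count back to $X$ over $\F_q$. There is nothing to add.
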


A morphism $X = \Spec A\to Y = \Spec B$ corresponding to a $K$-algebra homomorphism $\phi\colon B\to A$
is \emph{flat} if $A$ is flat when regarded as a $B$-module via $\phi$.  We say $X\to Y$ is \emph{flat in a neighborhood of a point} $x\in X$
corresponding to a prime ideal $P$ if there exists $b\in B$ such that $\phi(b)\not\in P$ and $A[1/\phi(b)]$ is flat as a $B[1/b]$ module.
In particular, if $X$ is irreducible, we say $X\to Y$ is \emph{generically flat} if it is flat in a neighborhood of the generic point $\eta$ of $X$.
Note that \emph{generic} here means generically in $X$.

\begin{thm}
\label{generic-flatness}
If $A$ and $B$ are integral domains, then $\Spec A\to \Spec B$ is dominant if and only if it is generically flat.
\end{thm}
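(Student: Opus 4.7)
The plan is to first observe that both conditions in the statement reduce to statements about the ring homomorphism $\phi\colon B\to A$ associated to the morphism $f\colon \Spec A\to \Spec B$. Because $\Spec B$ is irreducible with generic point corresponding to $(0)$, dominance of $f$ is equivalent to injectivity of $\phi$. On the other hand, generic flatness of $f$ means, by definition, that there exists $b\in B$ with $\phi(b)\neq 0$ such that $A[1/\phi(b)]$ is flat as a $B[1/b]$-module. So the theorem reduces to the equivalence: $\phi$ is injective if and only if such a $b$ exists.

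For the ``generically flat implies dominant'' direction I will argue by contrapositive. Suppose $\phi$ is not injective and pick any $0\neq b_0\in\ker\phi$. For every $b\in B$ with $\phi(b)\neq 0$, the localization $B[1/b]$ is again an integral domain, and the image of $b_0$ in $B[1/b]$ is nonzero (since $B$ is a domain and $b\neq 0$). Meanwhile $A[1/\phi(b)]$ is a nonzero $B[1/b]$-module because $A$ is a domain and $\phi(b)\neq 0$, yet $b_0$ acts on it as $\phi(b_0)=0$. This exhibits $B[1/b]$-torsion in $A[1/\phi(b)]$. Since any flat module over an integral domain is torsion-free, no such $b$ can witness flatness, and $f$ fails to be generically flat.

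For the converse direction the key tool is Grothendieck's generic freeness theorem: if $R$ is a Noetherian integral domain and $M$ is a finitely generated module over a finitely generated $R$-algebra $S$, then there exists a nonzero $r\in R$ for which $M[1/r]$ is a free $R[1/r]$-module. I will apply this with $R=B$, $S=A$, and $M=A$. The hypotheses are met: $B$ is a finitely generated $K$-algebra, hence Noetherian, and is a domain by assumption; and $A$, being a finitely generated $K$-algebra with $K\subset B$ via $\phi$, is automatically a finitely generated $B$-algebra. Generic freeness then produces $0\neq b\in B$ such that $A\otimes_B B[1/b]=A[1/\phi(b)]$ is free, and in particular flat, as a $B[1/b]$-module. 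Injectivity of $\phi$ guarantees that $\phi(b)\neq 0$, so $b$ legitimately witnesses generic flatness of $f$.

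The only serious input is Grothendieck's generic freeness theorem; everything else is the observation that flatness over an integral domain forces torsion-freeness. Given that the appendix already cites nontrivial results from commutative algebra, invoking generic freeness should be unproblematic, so I do not anticipate any real obstacle beyond naming the right theorem.
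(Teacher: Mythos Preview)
Your proof is correct and follows essentially the same route as the paper's: both directions hinge on the equivalence between dominance and injectivity of $\phi$, with Grothendieck's generic freeness supplying the ``dominant $\Rightarrow$ generically flat'' implication and the torsion-freeness of flat modules over a domain handling the converse. Your version is slightly more explicit in verifying the hypotheses of generic freeness (Noetherianity of $B$, finite generation of $A$ over $B$) and in checking that $A[1/\phi(b)]$ is nonzero, but the argument is the same.
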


\begin{proof}
As $B$ is an integral domain if $\Spec A\to \Spec B$ is dominant, then $\phi\colon B\to A$ is injective.
Grothendieck's generic freeness lemma \cite[Th.~14.4]{MR1322960} says that there exists a non-zero $b$ such that
$A[1/\phi(b)]$ is free, and therefore flat, as a $B[1/b]$-module.  Conversely, if there exists $b\in B$
such that $A[1/\phi(b)]$ is flat as B[1/b]-module, then every non-zero element of $B[1/b]$ maps to a non-zero element of $A[1/\phi(b)]$.
Since $B$ is an integral domain, $B\to A[1/\phi(b)]$ is injective \cite[Cor.~6.3]{MR1322960}.  As this homomorphism factors through $B\to A$,
the latter morphism is injective, and the generic point of $\Spec A$ maps to the generic point of $\Spec B$.
\end{proof}

A local ring $B$ with maximal ideal $M$ is \emph{regular} if $\dim_{B/M}M/M^2 = \dim B$.  In the special case that
$B = A_{\m}$ where $\m$ is the maximal ideal corresponding to a closed point $x$, this is equivalent to the condition $\dim T_x X = \dim X$,
and we say that $X$ is \emph{regular at $x$}.  If all local rings of $A$ are regular, we say that $X$ is \emph{non-singular}.

\begin{thm}
\label{smoothness}
Suppose $K$ is a perfect field and a $K$-variety $X$ is regular at a closed point $x\in X(K)$.
Then there exists an element $f\in A$ which does not lie in the maximal ideal associated to $x$, a morphism 
$$Y = \Spec[y_1,\ldots,y_m]\to Z = Spec[z_1,\ldots,z_{m-n}],$$
and an isomorphism $i\colon \Spec A[1/f] \to Y_z$, where $z$ corresponds to the ideal 
$$(z_1,\ldots,z_{m-n}),$$
such that the induced map
of Zariski tangent spaces $T_{i(x)}Y \to T_z Z$ is surjective.
Moreover, there exists a morphism from $X$ to $\Spec K[t_1,\ldots,t_n]$ which induces an isomorphism of tangent spaces at $x$.
\end{thm}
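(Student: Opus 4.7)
My plan is to use the Jacobian criterion of smoothness.  Because $K$ is perfect, regularity of $X$ at the $K$-rational point $x$ is equivalent to smoothness at $x$, so my proof will amount to making this equivalence geometrically explicit.  First I choose a presentation $A=K[y_1,\ldots,y_m]/I$ (possible since $A$ is finitely generated), giving a closed immersion $\Spec A\hookrightarrow Y=\Spec K[y_1,\ldots,y_m]$.  After a translation of coordinates, I may assume that $x$ corresponds to the maximal ideal $\mathfrak{M}=(y_1,\ldots,y_m)$ of $K[y_1,\ldots,y_m]$.  The Zariski cotangent space $\mathfrak{M}/\mathfrak{M}^2$ is identified with $K^m$, and the cotangent space of $X$ at $x$ is its quotient by the image $\bar I$ of $I$.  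Regularity at $x$ gives $\dim_K \bar I = m-n$.

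Next I would pick $f_1,\ldots,f_{m-n}\in I$ whose images in $\mathfrak{M}/\mathfrak{M}^2$ form a $K$-basis of $\bar I$, and define $\phi\colon Y\to Z=\Spec K[z_1,\ldots,z_{m-n}]$ by $z_i\mapsto f_i$.  Taking $z$ to be the origin, the fiber $Y_z$ is $V(f_1,\ldots,f_{m-n})$, and by construction the induced map $T_xY\to T_zZ$ on Zariski tangent spaces is dual to a $K$-linear injection, hence is surjective.  The inclusion $(f_1,\ldots,f_{m-n})\subset I$ together with the equality of their images in $\mathfrak{M}/\mathfrak{M}^2$ gives, after localizing at $\mathfrak{M}$ and invoking Nakayama, the equality $I_{\mathfrak{M}}=(f_1,\ldots,f_{m-n})_{\mathfrak{M}}$.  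Clearing denominators from finitely many generators of $I$ produces an element $f\in K[y_1,\ldots,y_m]$, not lying in $\mathfrak{M}$ (equivalently, $f\notin \mathfrak{m}$ when viewed in $A$), such that $I[1/f]=(f_1,\ldots,f_{m-n})[1/f]$.  This gives the required isomorphism $i\colon \Spec A[1/f]\xrightarrow{\sim} Y_z$, where $Y$ is implicitly restricted to the principal open set $\{f\neq 0\}$; the tangent-surjectivity claim is preserved.

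For the final sentence, I would choose $t_1,\ldots,t_n\in A$ whose residues in $\mathfrak{m}/\mathfrak{m}^2$ form a $K$-basis (possible since $\dim_K\mathfrak{m}/\mathfrak{m}^2=n$ by regularity).  The $K$-algebra homomorphism $K[t_1,\ldots,t_n]\to A$ induces a morphism $X\to \Spec K[t_1,\ldots,t_n]$ sending $x$ to the origin, and by design its cotangent map is surjective onto a space of the same dimension, hence an isomorphism; dually this gives the desired isomorphism of tangent spaces.

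The main technical obstacle I expect is the Nakayama step used to pass from equality of ideals in $\mathfrak{M}/\mathfrak{M}^2$ to equality after localization, and then the bookkeeping to convert this localization to inverting a single element $f$ defined on all of $Y$ while keeping the fiber identification clean.  Everything else (the Jacobian criterion interpretation, the construction of $\phi$, and the final cotangent argument) is essentially linear algebra once the presentation is fixed.
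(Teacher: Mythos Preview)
Your approach is essentially the same as the paper's: both rest on the Jacobian criterion and the equivalence, over a perfect field, of regularity at a $K$-point with smoothness there. The paper simply cites this package from Milne and EGA (and for the last sentence invokes the existence of an \'etale morphism to $\mathbb{A}^n$), whereas you unpack the construction by hand. Your treatment of the final sentence---choosing $t_1,\ldots,t_n$ to induce a basis of $\mathfrak m/\mathfrak m^2$---is correct and is exactly what the \'etale morphism in the paper's citation provides at the level of tangent spaces.

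There is one genuine gap, precisely at the step you flagged. Nakayama does \emph{not} give $I_{\mathfrak M}=(f_1,\ldots,f_{m-n})_{\mathfrak M}$ from the equality of images in $\mathfrak M/\mathfrak M^2$: writing $J=(f_1,\ldots,f_{m-n})$, what you have is $I_{\mathfrak M}\subset J_{\mathfrak M}+\mathfrak M_{\mathfrak M}^2$, whereas Nakayama for the module $I_{\mathfrak M}/J_{\mathfrak M}$ would need $I_{\mathfrak M}\subset J_{\mathfrak M}+\mathfrak M_{\mathfrak M}\cdot I_{\mathfrak M}$, and $\mathfrak M^2$ is not $\mathfrak M\cdot I$ in general. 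The standard fix uses regularity in an essential way: since $f_1,\ldots,f_{m-n}$ map to linearly independent elements of $\mathfrak M/\mathfrak M^2$, they form part of a regular system of parameters in the regular local ring $R_{\mathfrak M}=K[y_1,\ldots,y_m]_{\mathfrak M}$, so $R_{\mathfrak M}/J_{\mathfrak M}$ is regular local of dimension $n$, in particular a domain. The surjection $R_{\mathfrak M}/J_{\mathfrak M}\twoheadrightarrow R_{\mathfrak M}/I_{\mathfrak M}=A_{\mathfrak m}$ is then a surjection of local rings of the same Krull dimension $n$ with domain source, hence has zero kernel; thus $I_{\mathfrak M}=J_{\mathfrak M}$. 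After that, your ``clearing denominators'' step to pass to a single $f$ is fine.
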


\begin{proof}

Conversely, since $K$ is perfect, if $X$ is regular at $x$, then $Y_z\to \Spec K$ is smooth at $i(x)$ in the sense of Grothendieck \cite[Cor.~17.15.3]{MR0238860}.
By \cite[I~Prop.~3.24]{Milne}, for some $m\ge n$ and some $P_1,\ldots,P_{m-n}\in K[y_1,\ldots,y_m]$, 
there exists $f\not\in \m$ such that 
$$A[1/f]\cong K[y_1,\ldots,y_m]/(P_1,\ldots,P_{m-n})$$ 
and 
$$\det\begin{pmatrix}\frac{\partial P_i}{\partial y_j} \end{pmatrix}_{1\le i,j\le m-n}$$
is a unit in $A[1/f]$.  The $K$-homomorphism $K[z_1,\ldots,z_{m-n}]\to K[y_1,\ldots y_m]$ sending $z_i\mapsto P_i(y_1,\ldots,y_m)$
determines a morphism $Y\to Z$ whose fiber over $(z_1,\ldots,z_{m-n})$ is isomorphic to $\Spec A[1/f]$ and which induces a 
surjection of tangent spaces as claimed.  By \cite[Cor.~17.15.9]{MR0238860}, there is a morphism $g\colon X\to \Spec K[t_1,\ldots,t_n]$ which is \'etale at $x$
and therefore smooth.  Since the dimensions are equal, $g$ induces an isomorphism of tangent spaces at $x$ \cite[Th.~17.11.1]{MR0238860}.

\end{proof}

\begin{thm}
\label{density}
If $K$ is a local field, $X/K$ a variety, and $x$ a regular point of $X$ corresponding to a maximal ideal with $A/\m\cong K$, then $X(K)$
is Zariski-dense in $X$.
\end{thm}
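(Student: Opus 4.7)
The plan is to combine the local $K$-analytic structure of $X$ near $x$ furnished by Theorem~\ref{smoothness} with the elementary fact that a non-empty analytic open subset of $K^n$ is Zariski-dense in $\mathbb{A}^n_K$ whenever $K$ is an infinite field. Since $x$ is regular, it lies on a unique irreducible component of $X$, so I would first pass to this component to assume $X$ irreducible of dimension $n := \dim T_x X$; a local field is of course infinite.

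Next, Theorem~\ref{smoothness} supplies a morphism $g\colon X\to \mathbb{A}^n_K = \Spec K[t_1,\ldots,t_n]$ inducing an isomorphism of Zariski tangent spaces at $x$. A morphism between smooth $K$-varieties of equal dimension that induces an isomorphism of tangent spaces at $x$ is unramified and flat, hence \'etale, at $x$, and since \'etaleness is Zariski-open, $g$ is \'etale in some Zariski-open neighborhood of $x$. Now I would invoke the inverse function theorem for $K$-analytic maps, valid for any complete non-trivially valued field and in particular for every local field: $g$ restricts to a $K$-analytic isomorphism between some analytic open neighborhood $U$ of $x$ in $X(K)$ and some analytic open neighborhood $V$ of $g(x)$ in $K^n$, so $V\subseteq g(X(K))$.

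Finally, let $Y$ be the Zariski closure of $X(K)$ in $X$. Then $V\subseteq g(Y)$, and since $K$ is infinite no non-zero polynomial in $n$ variables can vanish identically on a non-empty analytic open subset of $K^n$ (a short induction on $n$, the base case being that a non-zero one-variable polynomial over an infinite field has only finitely many roots). Therefore $V$, and hence $g(Y)$, is Zariski-dense in $\mathbb{A}^n_K$, so $g|_Y\colon Y\to \mathbb{A}^n_K$ is dominant, and Theorem~\ref{dim-variation} forces $\dim Y \ge n = \dim X$. Irreducibility of $X$ then yields $Y = X$. The only step that is not pure bookkeeping is the appeal to the $K$-analytic inverse function theorem over a non-Archimedean local field, but this is classical; everything else follows formally from Theorem~\ref{smoothness} and elementary dimension theory.
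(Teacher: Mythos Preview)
Your proof is correct and follows essentially the same route as the paper's: both use Theorem~\ref{smoothness} to produce the morphism $g\colon X\to \mathbb{A}^n_K$ inducing an isomorphism of tangent spaces at $x$, apply the implicit/inverse function theorem over the local field $K$ to identify a $K$-analytic neighborhood of $x$ in $X(K)$ with an open subset of $K^n$, and then use (via induction on $n$) that a non-empty open subset of $K^n$ is Zariski-dense in $\mathbb{A}^n_K$. Your explicit reduction to the irreducible component through $x$ and the direct dominance argument in place of the paper's contradiction phrasing are cosmetic differences only.
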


\begin{proof}
By the implicit function theorem  \cite[Th.~5.7.1]{MR0219078}, there exists a neighborhood of $x$ in $X(K)$ which is a $K$-analytic manifold, and by
a second application of the same result, $g$ induces an isomorphism from a neighborhood of $x$ in $X(K)$ to a neighborhood of $g(i(x))=(0,\ldots,0)$ in $K^n$.
If the Zariski closure of $X(K)$ in $X$ were of dimension $<n$ the Zariski-closure $Z$ of its image in $\Spec K[t_1,\ldots,t_n]$
would be a proper closed subvariety of $\Spec K[x_1,\ldots,x_n]$.  By induction on $n$, $Z(K)\subset K^n$ cannot contain a subset of the form
$Z_1\times \cdots\times Z_n$ where all the $Z_i$ are all  infinite.  In particular, it cannot contain a non-empty open subset of $K^n$.

\end{proof}

\begin{thm}
\label{smooth-vs-dominant}
Let $f\colon X\to Y$ be a morphism of non-singular irreducible varieties over $\C$.  If $T_x X\to T_{f(x)} Y$ is surjective,
then $X_{f(x)}$ is regular at $x$ and $X\to Y$ is dominant.  
Conversely, if $X\to Y$ is dominant, there exists a non-empty open subset $U\subset X(\C)$ so that for all $x\in U$,
$T_x X\to T_{f(x)} Y$ is surjective.
\end{thm}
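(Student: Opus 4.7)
The plan is to treat the two directions separately, using the infrastructure of the appendix together with standard completed-local-ring techniques.

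For the forward direction, assume $df_x$ is surjective. To see that $X_{f(x)}$ is regular at $x$: the tangent space of the scheme-theoretic fiber satisfies $T_xX_{f(x)}=\ker(df_x)$, which has dimension $\dim X-\dim Y$ by the non-singularity of $X$ and $Y$ together with the surjectivity hypothesis. Any irreducible component $C$ of $X_{f(x)}$ through $x$ satisfies $\dim C\ge \dim X-\dim Y$ by Theorem~\ref{dim-variation}(4), while the inclusion $T_xC\subseteq T_xX_{f(x)}$ forces $\dim T_xC\le \dim X-\dim Y$, and $\dim T_xC\ge \dim C$ holds generally. These inequalities collapse to equalities, so $C$ is regular at $x$. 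For dominance, I would pass to completed local rings: non-singularity gives $\widehat{\mathcal{O}}_{Y,f(x)}\cong \C[[t_1,\ldots,t_m]]$ with $m=\dim Y$ and $\widehat{\mathcal{O}}_{X,x}\cong \C[[s_1,\ldots,s_n]]$ with $n=\dim X$. Surjectivity of $df_x$ is dual to injectivity on cotangent spaces, and a formal change of variables in $\C[[s]]$ completes the images of the $t_i$ to a regular system of parameters, exhibiting the induced map of power series rings as the obvious inclusion (hence injective). Combined with Krull's intersection theorem $\mathcal{O}_{X,x}\hookrightarrow \widehat{\mathcal{O}}_{X,x}$ and the injectivity of the localization maps for $X$ and $Y$ (integral, since non-singular irreducible), one concludes that $\mathcal{O}(Y)\to \mathcal{O}(X)$ is injective, which is equivalent to $f$ being dominant.

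For the reverse direction, assume $f$ is dominant. The cleanest input is generic smoothness in characteristic zero: for a dominant morphism between non-singular varieties over $\C$, there is a non-empty open $W\subset Y$ such that $f^{-1}(W)\to W$ is smooth, hence $df_x$ is surjective at every $x\in U:=f^{-1}(W)$. One can alternatively derive the statement directly from the appendix: Theorem~\ref{generic-flatness} gives flatness on a non-empty open $V\subset X$, Theorem~\ref{dim-variation}(2) gives a non-empty open $W\subset Y$ on which every component of $X_y$ has dimension $\dim X-\dim Y$, and in characteristic zero the generic fiber is geometrically reduced and thus regular at its generic point. Consequently $X_{f(x)}$ is regular at $x$ for $x$ in some non-empty open $U\subseteq V\cap f^{-1}(W)$, and the tangent-space computation from the forward direction forces $\dim \ker(df_x)=\dim X-\dim Y$, whence $df_x$ is surjective.

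The main obstacle is the reverse direction, which fundamentally depends on the characteristic-zero hypothesis: in positive characteristic the Frobenius endomorphism is a dominant morphism with identically vanishing differential, so generic smoothness (or an equivalent char $0$ input) is essential. The forward direction is essentially formal, once one has the identification $T_xX_{f(x)}=\ker(df_x)$ and the power-series change-of-variables.
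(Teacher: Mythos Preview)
Your argument is correct, with one small slip in the forward direction: you conclude that each component $C$ through $x$ is regular, but the claim is that $X_{f(x)}$ itself is regular at $x$. Your inequalities already give this: you have $\dim T_x X_{f(x)}=\dim X-\dim Y$, and $\dim_x X_{f(x)}=\max_C\dim C=\dim X-\dim Y$ by the squeeze you set up, so $\dim T_x X_{f(x)}=\dim_x X_{f(x)}$ and the local ring of the fiber is regular (a posteriori only one component passes through $x$, since regular local rings are domains). Just rephrase the conclusion.

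The route, however, differs from the paper's. For dominance, the paper argues entirely through EGA: surjectivity on tangent spaces gives smoothness of $f$ at $x$ \cite[17.11.1]{MR0238860}, smoothness gives flatness at $x$, flatness spreads to an open neighborhood \cite[11.1.1]{MR0217086}, and generic flatness is equivalent to dominance for morphisms of integral affine schemes (Theorem~\ref{generic-flatness}). Your completed-local-ring argument sidesteps flatness altogether, reducing injectivity of $\mathcal{O}(Y)\to\mathcal{O}(X)$ to the transparent inclusion $\C[[t_1,\ldots,t_m]]\hookrightarrow\C[[s_1,\ldots,s_n]]$; this is more elementary and self-contained, trading the EGA citations for Cohen's structure theorem and Krull's intersection theorem. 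For the converse, the two proofs coincide in substance: the paper invokes the Jacobian criterion \cite[Cor.~16.23]{MR1322960}, which is exactly generic smoothness in characteristic zero. Your alternative derivation from Theorems~\ref{generic-flatness} and~\ref{dim-variation} is less convincing, since the step ``the generic fiber is geometrically reduced, hence regular at its generic point'' already encodes generic smoothness; stick with the direct citation.
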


\begin{proof}
A non-singular irreducible variety has a coordinate ring which is an integral domain.  Indeed, if $a$ is a non-zero element of $\rad A$, where $\Spec A$ is irreducible,
then the annihilator of $a$ is contained in some maximal ideal $\m$, so $a$ maps to a non-zero nilpotent element of the regular local ring $A_{\m}$,
which is impossible \cite[Cor.~10.14]{MR1322960}.  Thus $\rad A = 0$, and with $\Spec A$ irreducible, this means $A$ is an integral domain.
In our setting, both the coordinate ring $A$ of $X$ and the coordinate ring $B$ of $Y$ are integral domains.

As $x$ and $y:=f(x)$ are regular points, $X\to \Spec \C$ and $Y\to \Spec \C$ are smooth at $x$ and $y$ respectively.  By \cite[Th.~17.11.1]{MR0238860}, surjectivity
on the level of tangent spaces now implies that $X\to Y$ is smooth at $x$, \cite[Th.~17.5.1]{MR0238860} implies that $X_y\to \Spec \C$ is smooth at $x$,
and \cite[Prop.~17.15.1]{MR0238860} implies that $X_y$ is regular at $x$.
As \cite[Th.~17.11.1]{MR0238860} implies that $X\to Y$ is flat at $x$,
\cite[Th.~11.1.1]{MR0217086} shows that $X\to Y$ is flat in a neighborhood of $x$ and hence generically.  By Theorem~\ref{generic-flatness}, the morphism is dominant.

For the converse, we use the Jacobian criterion for smoothness in the form \cite[Cor.~16.23]{MR1322960} to prove that there exists $b\in B$ such that
$A[1/b]$ is smooth over $B[1/b]$.

\end{proof}

\begin{thm}
\label{CM}
If $f\colon X\to Y$ is a morphism of non-singular irreducible varieties over $\C$, $y\in Y(\C)$, every component of $X_y$ has dimension $\dim X - \dim Y$, and $x\in X_y(\C)$ is a non-singular point, then $T_x X\to T_y Y$ is surjective.
\end{thm}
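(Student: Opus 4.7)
The plan is to establish the surjectivity by a dimension count, exploiting the standard identification
$$T_x X_y = \ker\bigl(df_x\colon T_x X\to T_y Y\bigr).$$
Writing $X = \Spec A$, $Y = \Spec B$, with $f$ corresponding to a $\C$-algebra homomorphism $\phi\colon B\to A$, and letting $\mathfrak{m}_y\subset B$ and $\mathfrak{n}_x\subset A$ be the maximal ideals at $y$ and $x$, the fiber is $X_y = \Spec(A/\phi(\mathfrak{m}_y)A)$, and a routine computation identifies $T_x X_y$ with the subspace of $T_x X = (\mathfrak{n}_x/\mathfrak{n}_x^2)^\ast$ that annihilates the image of $\phi(\mathfrak{m}_y)$ in $\mathfrak{n}_x/\mathfrak{n}_x^2$, which is precisely $\ker(df_x)$.

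Given this, the surjectivity of $df_x$ follows from rank-nullity once the three relevant dimensions are pinned down. Non-singularity of $X$ at $x$ and of $Y$ at $y$ give $\dim T_x X = \dim X$ and $\dim T_y Y = \dim Y$. Non-singularity of $X_y$ at $x$ gives $\dim T_x X_y = \dim_x X_y$, and the hypothesis that every component of $X_y$ has dimension $\dim X - \dim Y$ forces $\dim_x X_y = \dim X - \dim Y$. Hence
$$\dim \operatorname{im}(df_x) = \dim T_x X - \dim \ker(df_x) = \dim X - (\dim X - \dim Y) = \dim Y = \dim T_y Y,$$
so $df_x$ is surjective, as desired.

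I anticipate no substantive obstacle; the only step that requires a moment of verification is the identification $T_x X_y = \ker(df_x)$, which is a standard manipulation of cotangent spaces. As an alternative route that avoids this identification, one could invoke miracle flatness: since $X$ and $Y$ are non-singular (hence Cohen--Macaulay), the expected-dimension condition on the fiber forces $f$ to be flat at $x$, and combining this with regularity of the fiber at $x$ yields smoothness of $f$ at $x$, from which the surjectivity of $df_x$ follows via the converse direction of Theorem~\ref{smooth-vs-dominant}. The direct tangent-space calculation sketched above is however shorter and avoids invoking any heavy machinery.
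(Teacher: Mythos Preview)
Your primary argument is correct and genuinely more elementary than the paper's. The paper proceeds exactly along the alternative route you sketch at the end: it invokes the Cohen--Macaulay property of non-singular varieties, applies miracle flatness (\cite[Prop.~15.4.2]{MR0217086}) to deduce flatness of $f$ at $x$ from the fiber-dimension hypothesis, combines flatness with regularity of the fiber to obtain smoothness of $f$ at $x$, and then reads off the tangent-space surjection from the characterization of smoothness. Your direct approach via the identification $T_x X_y = \ker(df_x)$ and rank-nullity sidesteps all of this machinery; it uses only the definition of regularity as equality of tangent-space dimension and local Krull dimension, applied three times. The trade-off is that the paper's route, while heavier, yields the stronger intermediate conclusion that $f$ is flat (indeed smooth) at $x$, which could be useful elsewhere; your argument gives only the tangent-space surjectivity, which is all the theorem asserts.
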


\begin{proof}
By \cite[$\mathbf{0}_{\mathrm{IV}}$~Cor.~17.1.3]{MR0173675}, $X$ and $Y$ are Cohen-Macauley, and so by \cite[Prop.~15.4.2]{MR0217086},
$f$ is flat at every point of the fiber $X_y$.  By \cite[Th.~17.5.1]{MR0238860}, it follows that $f$ is smooth at $x$, and by \cite[Th.~17.11.1]{MR0238860},
we conclude that $T_x X\to T_y Y$ is surjective.

\end{proof}

\section{Character bounds}

\label{CharacterBoundsAppendix}

Here, we present some character bounds that are used in the proof of Theorem \ref{QuadTheorem}.

\begin{lemma}
\label{Grass}
Let $n$ be an arbitrary natural number.
There exists a real number $C_n$ such that if $s$ is a natural number less than $n$ and $T : \F_q^n \to \F_q^n$ is a semisimple linear transformation whose eigenvalues all have algebraic multiplicity at most $m$, then the number of $s$-dimensional $\F_q$-subspaces of $\F_q^n$ which are fixed by $T$ is at most $C_n q^{ms}$.
\end{lemma}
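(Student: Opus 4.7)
The plan is to decompose $\F_q^n$ as an $\F_q[T]$-module and reduce the count to a product of Gaussian binomial coefficients attached to the isotypic components.  Since $T$ is semisimple, the minimal polynomial of $T$ on $\F_q^n$ is squarefree, so $\F_q[T]$ acts through a finite product of field extensions of $\F_q$.  This gives a canonical decomposition $\F_q^n = U_1\oplus\cdots\oplus U_r$ into $T$-isotypic components, where $U_i$ is naturally an $\F_{q^{d_i}}$-vector space of some dimension $n_i$ on which $T$ acts by multiplication by a generator of $\F_{q^{d_i}}$ over $\F_q$.  The constraints are $\sum_i d_i n_i = n$ and, since $n_i d_i$ is the algebraic multiplicity of each Galois conjugate eigenvalue appearing in $U_i$ (counted as an eigenvalue of $T$ on $\bar\F_q^n$ with multiplicity $n_i$), the hypothesis on eigenvalue multiplicities gives $n_i\le m$ for every $i$.

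Next, I would observe that a $T$-invariant $\F_q$-subspace $W\subset \F_q^n$ splits canonically as $W = \bigoplus_i (W\cap U_i)$, and that a $T$-invariant $\F_q$-subspace of $U_i$ is the same thing as an $\F_{q^{d_i}}$-subspace of $U_i$.  Thus the number of $T$-invariant $\F_q$-subspaces of dimension $s$ equals
\[
 N_s \;=\; \sum_{(s_1,\ldots,s_r)} \prod_{i=1}^r \binom{n_i}{s_i/d_i}_{q^{d_i}},
\]
where the sum runs over tuples of non-negative integers with $d_i\mid s_i$ and $\sum_i s_i = s$, and the $q$-binomial is set to $0$ when $d_i\nmid s_i$.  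The number of such tuples is bounded solely in terms of $n$.

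Finally, I would estimate each factor using the elementary bound $\binom{N}{k}_Q \le 2^N Q^{k(N-k)}$ valid for $Q\ge 2$.  Writing $k_i = s_i/d_i$, this gives
\[
 \binom{n_i}{k_i}_{q^{d_i}} \;\le\; 2^{n_i}\, q^{d_i k_i(n_i-k_i)} \;\le\; 2^{n_i}\, q^{s_i n_i} \;\le\; 2^{n}\, q^{m s_i},
\]
using $n_i\le m$ in the last step.  Multiplying over $i$ and summing over the bounded number of admissible tuples yields $N_s \le C_n q^{ms}$, as required.

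The routine work is the Gaussian binomial inequality and the combinatorial bound on the number of tuples; the only conceptual step is recognising that semisimplicity promotes ``$T$-invariant $\F_q$-subspace of the isotypic piece $U_i$'' to ``$\F_{q^{d_i}}$-subspace of $U_i$'', which is what allows the eigenvalue multiplicity hypothesis $n_i\le m$ to control the exponent.  I do not anticipate a serious obstacle beyond careful bookkeeping of the decomposition over non-algebraically-closed $\F_q$.
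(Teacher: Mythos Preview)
Your proof is correct and follows essentially the same route as the paper: decompose $\F_q^n$ into isotypic pieces, each an $\F_{q^{d_i}}$-vector space, split any $T$-invariant subspace accordingly, and bound the resulting product of Gaussian binomials by a constant times $q^{ms}$ using $n_i\le m$. One small slip of phrasing: the algebraic multiplicity of each Galois-conjugate eigenvalue in $U_i$ is $n_i$, not $n_i d_i$ (your parenthetical already says this correctly), so the conclusion $n_i\le m$ is fine.
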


\begin{proof}
Let $V = \F_q^n$ decompose as a direct sum $V_1^{a_1}\oplus\cdots\oplus V_r^{a_r}$, where
the $V_i$ are pairwise non-isomorphic irreducible $\F_q[T]$-modules of $\F_q$-dimension $b_1,\ldots,b_r$
respectively, and $a_1,\ldots,a_r\le m$.  We can identify $V_i$ with $\F_{q^{b_i}}$ in such a way that
$\F_q[T]$-submodules of $V_i^{a_i}$ correspond to $\F_{q^{b_i}}$-subspaces of $\F_{q^{b_i}}^{a_i}$.
Every subspace $W\subset V$ fixed by $T$ is a direct sum $W_1\oplus\cdots\oplus W_r$, where
each $W_i$ is a $\F_q[T]$-submodule of $V_i^{a_i}$.  For each $r$-tuple of non-negative integers $w_i$ such that
$\sum_i b_i w_i = s$,
we can classify the subspaces $W$ such that $\dim_{\F_{q^{b_i}}}W_i = w_i$ by a product of $r$ Grassmannians
$G(a_i, w_i)(\F_{q^{b_i}})$.  As
$$|G(a_i,w_i)(\F_{q^{b_i}})| = \frac{\prod_{j=1}^{a_i} (q^{b_i j}-1)}{\prod_{k=1}^{w_i} (q^{b_i k}-1) \prod_{l=1}^{a_i-w_i} (q^{b_i l}-1)},$$
we have
\begin{align*}
\log_q \prod_{i=1}^r |G(a_i,w_i)(\F_{q^{b_i}})| &= \sum_{i=1}^r b_i w_i(a_i-w_i) + o(1) \\
                                                                                  & \le \sum_{i=1}^r b_i w_i m + o(1) = ms + o(1).
\end{align*}
\end{proof}

We understand \cite{Aner} that Aner Shalev and Roman Bezrukavnikov have unpublished estimates of character values of groups of Lie types at semisimple elements which are both stronger and more general than the following result.  Our proof, however, is elementary, using only classical results on the character theory of $\GL_n(\F_q)$ due to J.\ A.\ Green and Robert Steinberg.

\begin{thm} \label{CharacterBoundTheorem}
Given an  integer $n>0$ and positive real numbers $\alpha$ and $\beta$ such that 
$$\alpha < \frac{\beta^2}{1+2\beta},$$
for all sufficiently large finite fields $\F_q$, all irreducible characters $\chi$ of $G_n := \GL_n(\F_q)$, and all semisimple elements $x\in G_n$ whose maximal eigenvalue multiplicity is
$\le \alpha n$, we have
$$|\chi(x)| \le  \chi(1)^\beta.$$
\end{thm}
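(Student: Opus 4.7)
The strategy is to combine Green's explicit description of the irreducible characters of $\GL_n(\F_q)$ with the Grassmannian counting bound of Lemma~\ref{Grass}. By Green's classification, each irreducible character is parameterized by a partition-valued function $\Lambda$ on the set $\Theta$ of Frobenius orbits of characters of $\bar\F_q^\times$, satisfying $\sum_{\theta\in\Theta} d(\theta)\, |\Lambda(\theta)| = n$, where $d(\theta)$ is the orbit length. Both $\chi_\Lambda(1)$ and $\chi_\Lambda(x)$ (for semisimple $x$) admit explicit polynomial-in-$q$ expressions in terms of $\Lambda$ and the eigenvalue/multiplicity structure of $x$.

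First I would extract from Steinberg's degree formula a lower bound of the shape $\chi_\Lambda(1) \ge c_n\cdot q^{D(\Lambda)}$, where $D(\Lambda)$ is a combinatorial invariant of $\Lambda$ growing (roughly) like $n^2 - \sum_\theta d(\theta)\,|\Lambda(\theta)|^2$, up to a lower-order correction coming from the hook-length content of the partitions. Separately, Green's character formula for $\chi_\Lambda(x)$ expresses the value as a sum of a bounded-in-$q$ number of terms, each a product of (i) a Green polynomial of bounded $q$-degree, (ii) unipotent character degrees on the $\GL$-factors of $C_G(x)$, and (iii) a count of $x$-invariant subspaces of specified dimension. Term (iii) is bounded, by Lemma~\ref{Grass}, by $C_n\, q^{m s}$, where $m\le \alpha n$ is the maximum multiplicity of any eigenvalue of $x$ and $s$ is the relevant subspace dimension. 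Summing, one obtains an upper bound $|\chi_\Lambda(x)|\le C'_n\cdot q^{V(\Lambda)}$ for an explicit exponent $V(\Lambda)$ that is linear in $\alpha$ and in the shape of $\Lambda$.

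The final task is to verify $V(\Lambda) < \beta\cdot D(\Lambda)$ for every non-trivial $\Lambda$, with a strict margin that absorbs the constants $c_n,C'_n$ for all $q$ sufficiently large. Writing $\rho = \rho(\Lambda)$ for the scaled ``rank'' content of $\Lambda$ (normalized to lie in $(0,1]$), the required inequality reduces to a quadratic relation in $\rho$ and $\alpha$, and the sharp threshold $\alpha < \beta^2/(1+2\beta)$ emerges from extremizing over $\rho$. Edge cases ($\Lambda$ corresponding to a one-dimensional character, where $|\chi(x)|=1=\chi(1)^\beta$ trivially; and $\Lambda$ supported on very few orbits, where an induction on $n$ applied to the $\GL$-factors of $C_G(x)$ handles the residual analysis) must be separately verified but are routine.

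The main obstacle is this last optimization. A naive substitution of Lemma~\ref{Grass} by the trivial estimate $\asymp q^{s(n-s)}$ for the number of $s$-planes in $\F_q^n$ produces only the much weaker condition $\alpha\lesssim \beta$, insufficient for the application to Theorem~\ref{QuadTheorem} (where $\beta=4/9$ and $\alpha=1/10$ must both be admissible). It is precisely the $ms$-exponent in Lemma~\ref{Grass}, which exploits the bounded multiplicity of the eigenvalues of $x$, that upgrades the linear threshold $\alpha\lesssim\beta$ to the quadratic threshold $\alpha<\beta^2/(1+2\beta)$; verifying this upgrade is where care with Green's combinatorics is essential.
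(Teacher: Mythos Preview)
Your outline identifies the right ingredients (Green's parametrization, Lemma~\ref{Grass}) but the actual argument has a structure your sketch misses, and the gaps are not cosmetic. The paper does not carry out a single optimization over all $\Lambda$. It proceeds in two phases. First, Schur's bound $|\chi(x)|\le|C(x)|^{1/2}$, combined with $\log_q|C(x)|\le\alpha n^2+o(1)$ and the parabolic-induction degree bound $\log_q\chi(1)\ge \lambda_1(n-\lambda_1)+o(1)$, disposes of every $\chi$ whose parabolic label has no block larger than $(1-\alpha/\beta)n$. You never invoke Schur, and without it there is no mechanism in your outline to force a dominant block. Second, for $\chi$ with a dominant block $\lambda_1>(1-\alpha/\beta)n$, the parabolic-induction character formula writes $\chi(x)$ as a sum over $x$-stable $(n-\lambda_1)$-planes of products $\phi'(x')\phi''(x'')$; Lemma~\ref{Grass} bounds the number of summands by $C_n q^{\alpha n(n-\lambda_1)}$, and this reduces the problem to bounding a \emph{primary} character $\phi''$ on $G_{\lambda_1}$ at an element whose maximal multiplicity is at most $\alpha n\le\frac{\alpha}{1-\alpha/\beta}\lambda_1$. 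The threshold $\alpha<\beta^2/(1+2\beta)$ is exactly the condition that makes this last quantity $\le\frac{\beta^2}{1+\beta}\lambda_1$; it comes from this specific transfer, not from an abstract extremization over a ``rank'' parameter.

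The primary case is then not an edge case handled by induction on $n$ through the factors of $C_G(x)$, as you suggest. It requires its own argument: a reduction (when the supporting simplex has degree $1$) to \emph{unipotent} characters via a linear twist, then Steinberg's identity $\phi_\lambda=\sum_\mu K_{\lambda\mu}\chi_\mu$ expressing permutation characters on flag varieties in terms of unipotent characters, an induction on the majorization order of partitions, and finally a direct count of $x$-stable flags (bounding the dimension of each flag-variety component by $\frac{\beta^2(n-\lambda_1)n}{2(1+\beta)}$). Your description of Green's formula as ``a bounded-in-$q$ sum, each term containing a subspace count'' is closer to the parabolic-induction step than to Green's closed formula, and it leaves the unipotent residue entirely unaddressed.
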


\begin{proof}
Throughout the proof, we can and do assume without loss of generality that $q$ is sufficiently large in terms of $n$; the expression $o(1)$ is short for $o_q(1)$.
We also assume $\beta < 1$, since the theorem is trivial otherwise.  

We follow the notation and terminology of Green \cite{MR0072878}.  
For $s$  a positive integer, an \emph{$s$-simplex} $g$
is a $q$-Frobenius-orbit of length $s$ of complex characters of the multiplicative group
$\F_{q^s}^\times$.
We write $s = d(g)$ and call it the \emph{degree} of $s$.
By \cite[Th.~13]{MR0072878}, the irreducible characters of $G_n$ are indexed by partition-valued functions $\nu$ on the set $S$ of simplices such that
$$\sum_{g\in S} |\nu(g)| d(g) = n,$$
where $|p|$ denotes the sum of the parts of partition $p$.  Moreover,
if $g_1,\ldots,g_k$ are the simplices on which $\nu$ is supported, the character associated to $\nu$
is obtained by parabolic induction from the characters of 
$$G_{|\nu(g_1)|d(g_1)},\ldots,G_{|\nu(g_k)|d(g_1k)}$$
associated with the partition-valued functions $\nu_i$ supported at $g_i$ and such that $\nu_i(g_i) = \nu(g_i)$.
In particular, the degree of $\chi$ is at least $|G_n/P|$ where $P$ is the parabolic subgroup associated to the sequence
$$|\nu(g_1)|d(g_1),\ldots, |\nu(g_k)|d(g_k).$$
We can assume the $g_i$ to be chosen in such an order that this sequence is non-increasing.
If $\nu$ is supported on a single simplex, we say that the character is \emph{primary}.

Let $\lambda_1+\cdots+\lambda_r = n$ express $n$ as a sum of positive integers.  If $P_\lambda$ denotes the stabilizer 
in $G_n$ of a flag of $\F_q$-spaces
\begin{equation}
\label{flag}
(0)= V_0\subset V_1\subset V_2\subset \cdots\subset V_r = \F_q^n,
\end{equation}
where $\dim_{\F_q} V_i/V_{i-1} = \lambda_i$, then
$$|G_n/P_\lambda| = \frac{\prod_{i=1}^n (q^i-1)}{\prod_{i=1}^r\prod_{j=1}^{\lambda_i} (q^j-1)},$$
so 
$$\log_q |G_n/P_\lambda| = \sum_{1\le i < j\le r}\lambda_i\lambda_j + o(1) = n^2/2 - \sum_{i=1}^r \lambda_i^2 /2 + o(1).$$
In particular, if $\chi$ is a character of $G_n$ induced from some character of $P_\lambda$, then
\begin{equation}
\label{nine}
\log_q \chi(1) \ge n^2  - \sum_{i=1}^r \lambda_i^2 /2 + o(1).
\end{equation}

If $\lambda_1\ge \lambda_2\ge \cdots\ge \lambda_r$, then
\begin{equation}
\label{ten}
n^2/2 - \sum_{i=1}^r \lambda_i^2 /2 \ge \begin{cases} \lambda_1(n-\lambda_1) &\mbox{if } \lambda_1\ge n/2, \\ n^2/4&\mbox{if } \lambda_1\le n/2.\end{cases}
\end{equation}
Let $a_1,a_2,\ldots,a_k$ denote the eigenvalue multiplicities of $x$.  As $\max_i a_i \le \alpha n$ and $\sum_i a_i = n$,
we have $\sum_i a_i^2 \le \alpha n^2$.  Since the centralizer $C(x)$ of $x$ in $G_n$ is the group of $\F_q$-points of a connected reductive group of 
dimension $\sum_i a_i^2$,
\begin{equation}
\label{eleven}
\log_q |C(x)| =  \sum_i a_i^2 + o(1) \le\alpha n^2 + o(1).
\end{equation}
By Schur's lemma, $|\chi(x)| \le |C(x)|^{1/2}$, so if $\lambda_1 \le n/2$, then by (\ref{nine}), (\ref{ten}), and (\ref{eleven}), $|\chi(x)| > \chi(1)^\beta$ implies
\begin{align*}
\beta n^2/4&\le \beta\log_q \chi(1)+o(1) < \log_q |\chi(x)| + o(1)\le \frac{\log_q  |C(x)|}2 + o(1)\\
         & \le (\alpha/2)n^2 +o(1) < \frac{\beta^2}{2+4\beta} n^2+ o(1),
\end{align*}
impossible since $\beta > 0$.
Thus, we assume $\lambda_1 > n/2$.

Let $\gamma := \alpha/\beta$, so $\gamma < 1/2$.  We claim that if $q$ is sufficiently large and $|\chi(x)| > \chi(1)^\beta$, then $\lambda_1 > (1-\gamma)n$.  Indeed, if $n/2\le \lambda_1 \le (1-\gamma)n$, then 
\begin{align*}
\frac{\alpha n^2} 2 &\ge \frac{\log_q|C(x)|}2 + o(1) \ge \log_q |\chi(x)| + o(1)> \beta\log_q \chi(1) + o(1) \\
&\ge \beta\lambda_1(n-\lambda_1)+ o(1) \ge \beta n^2 \gamma(1-\gamma) + o(1) >  n^2 \alpha/2+ o(1),
\end{align*}
which is impossible for large $q$.  This justifies our claim.

We can therefore regard $\chi$ as arising from parabolic induction from an irreducible representation $\phi'$ of $G_{n-\lambda_1}$ and a primary irreducible representation $\phi''$ of $G_{\lambda_1}$, where $n-\lambda_1 < \gamma n$.  Thus,
\begin{align*}
\log_q \chi(1) &=  \log_q \phi'(1)+\log_q \phi''(1) + \lambda_1(n-\lambda_1)+o(1) \\
                         &\ge \log_q \phi''(1) + \lambda_1(n-\lambda_1)+o(1).
\end{align*}
On the other hand, $\chi(x)$ can be written as a sum of terms of the form $\phi'(x')\phi''(x'')$, where $x'\in G_{n-\lambda_1}$,
$x'' \in G_{\lambda_1}$, and $x'\oplus x''$ is conjugate to $x$.  The terms in the sum are indexed by elements of the Grassmannian of
$(n-\lambda_1)$-planes $W\subset \F_q^n$ such that $x$ preserves $W$, the action of $x$ on $W$ is conjugate to $x'$, and the
action of $x$ on $\F_q^n/W$ is conjugate to $x''$.   

The dimension estimate 
$$\log_q \phi'(1) \le \binom{n-\lambda_1}{2} + o(1)$$
can be deduced from \cite{MR0072878}, but can also be found in various forms in the literature.  (See, e.g., \cite[Th.~2.1]{MR1023808}, \cite[Th.~5.1]{MR3020739}.)
By Lemma~\ref{Grass}, we have
\begin{align*}
\log_q |\chi(x)| &\le \log_q \phi'(1) + \max_{x''}\log_q |\phi''(x'')| + \alpha (n-\lambda_1)n+o(1) \\
                          &< \frac{(n-\lambda_1)^2}2+ \max_{x''}\log_q |\phi''(x'')| + \alpha (n-\lambda_1)n+o(1) \\
                          &\le \gamma \lambda_1(n-\lambda_1)+ \max_{x''}\log_q |\phi''(x'')| + \alpha (n-\lambda_1)n+o(1) \\
                          &< \beta (n-\lambda_1)n+ \max_{x''}\log_q |\phi''(x'')|+o(1) \\
                          &<\beta \log_q |\chi(1)| + o(1),
\end{align*}
provided that
$$\log_q |\phi''(x'')|\le \beta \log_q \phi''(1) + o(1)$$
for all semisimple $x''\in G_{\lambda_1}$ with eigenvalue multiplicity less than or equal to
$$\alpha n \le \frac{\alpha\lambda_1}{1-\gamma} = \frac{\beta^2\lambda_1}{1+\beta}.$$
Replacing $n$ by $\lambda_1$ and $\phi''$ by $\chi$,
we have a statement very similar to what we originally set out to prove.
The advantage over the original statement is that we can now assume that $\chi$ is primary; the disadvantage is that the upper
bound on maximal eigenvalue multiplicity as a fraction of $n$ is $\frac{\beta^2}{1+\beta}$ instead of $\alpha = \frac{\beta^2}{1+2\beta}$.

From now on we assume that $\chi$ is associated to a partition-valued function of simplices $\nu$ supported on a single $g$.  Denoting by $s$ the degree of $g$ and setting $v = |\nu(g)|$, we have $n=sv$.  By \cite[Lemma~7.4]{MR0072878}, for each partition $\lambda$ of $v$, there exists a rational function $\{\lambda:t\}$ such that
$$\chi(1) = \Bigl(\prod_{i=1}^n (q^i-1) \Bigr)\{\nu(g):q^s\}^{-1}.$$
As this takes integer values for all prime powers $q$, it follows that the pole at $t=\infty$ of $\{\lambda:t\}^{-1}$
has order at most $\binom {v+1}2$.  Thus,
$$\log_q \chi(1) \ge \binom {n+1}2 - s\binom {v+1}2 + o(1) \ge \frac{n^2}4 + o(1)$$
if $s\ge 2$.  If $|\chi(x)| > \chi(1)^\beta$, then
\begin{align*}
\frac{\beta^2}{1+\beta}n^2 &\ge \log_q |C(x)| + o(1)> 2\log_q |\chi(x)| + o(1) \\
                                                &\ge 2\beta \log_q \chi(1) + o(1) > \frac{\beta n^2}{2} + o(1),
\end{align*}
which is impossible when $q$ is sufficiently large since $\beta < 1$.
Thus, we can assume that $s=1$, which means that after tensoring with a degree $1$ character of $G_n$ (which does not affect $|\chi(x)|$, of course), we can assume that $\nu$ is supported on the trivial simplex, i.e., $\chi$ is a \emph{unipotent} character.

Let $\chi = \chi_\lambda$ denote the unipotent character associated with any partition $\lambda_1+\cdots+\lambda_r=n$.  Let $\phi_\lambda$ be the permutation character associated with the same partition, i.e., the character associated to the action
of $G_n$ on the set of $\F_q$-flags (\ref{flag}).
It is a classical theorem of Steinberg \cite[\S2~Cor.~1]{MR0215903}
that 
$$\phi_\lambda = \sum_\mu K_{\lambda,\mu}\phi_\mu,$$
where $K_{\lambda,\mu}$ is the Kostka number associated to $\lambda$ and $\mu$, and the sum is taken over all
partitions $\mu$ of $n$.  In particular, $K_{\lambda,\mu}=0$ unless $\mu\preceq \lambda$ in the partial order of \emph{majorization}. 
(This means that $\mu_1+\mu_2+\cdots\mu_s \le \lambda_1+\lambda_2+\cdots+\lambda_s$ for all $s\le r$, with equality when $s=r$.)
It is also known that $K_{\lambda,\lambda} = 1$, and from \cite[Lemma~7.4]{MR0072878}, it follows that
$\dim \chi_\mu \le \dim \chi_\lambda$ if $q$ is sufficiently large and $\mu\preceq \lambda$.
We can therefore proceed by induction with respect to the partial order $\preceq$.  The base case is trivial, and it suffices to prove
$$\log_q |\phi_\lambda(x)| \le \beta\log_q |\phi_\lambda(1)| + o(1).$$

As the inner product of $\phi_\lambda$ with itself is bounded above independent of $q$, we have
$$\log_q |\phi_\lambda(x)| \le \frac{\log_q |C(x)|}{2}+o(1) \le \frac{\beta^2n^2}{2(1+\beta)}+o(1).$$
Thus, by (\ref{nine}) and (\ref{ten}), $\lambda_1 > n/2$.
Now $\phi_\lambda(x)$ counts the number of $x$-stable flags (\ref{flag})
with $\dim V_i/V_{i-1}$ giving the parts of the partition $\lambda$ in some given order.  We choose an order such that 
$\dim \F_q^n/V_{r-1} = \lambda_1$.  
Each such flag determines the combinatorial data of the multiplicities of the various eigenvalues of $x$ on each $V_i$.
This combinatorial data fixes an irreducible component of the variety of $x$-stable flags.  The number of possibilities for the
data is bounded independent of $q$, and each component is a product of flag varieties on spaces of dimensions 
$\le \frac{\beta^2 n}{1+\beta}$ whose dimensions add up to $n-\lambda_1$.  The dimension of a flag variety on a space of dimension $a$ is less than $a^2/2$, and since the eigenspaces of $x$ all have dimension $\le \frac{\beta^2 n}{1+\beta}$, we deduce that
$$\log_q | \phi_\lambda(x) | \le \frac{\beta^2(n-\lambda_1)n}{2(1+\beta)} + o(1).$$
On the other hand,
\begin{align*}
\log_q \phi_\lambda(1) &= \frac{n^2-\sum_{i=1}^r \lambda_i^2}{2} + o(1) \ge \lambda_1(n-\lambda_1)+o(1) \\
                                           &\ge \frac{(n-\lambda_1)n}2+ o(1),
\end{align*}
and since $\beta < 1$, this implies $\log_q |\phi_\lambda(x)| < \beta \log_q \phi_\lambda(1)$
for all $q$ sufficiently large.
\end{proof}

\bibliography{refs}
\bibliographystyle{alpha}

\end{document}